\newcommand{\no}{\noindent}
\newtheorem{thm}{Theorem}[section]
\newtheorem{prop}[thm]{Proposition}
\newtheorem{cor}[thm]{Corollary}
\newtheorem{rem}[thm]{Remark}
\newtheorem{ex}[thm]{Example}
\newtheorem{lem}[thm]{Lemma}
\newtheorem{notation}[thm]{Notation}
\numberwithin{equation}{section}
\newcommand{\R}{\mathbb{R}}
\newcommand{\Z}{\mathbb{Z}}
\newcommand{\N}{\mathbb{N}}
\newcommand{\ds}{\displaystyle}
\newcommand{\sm}{\setminus}
\newcommand{\pd}{\partial}
\newcommand{\al}{\alpha}
\newcommand{\Ga}{\Gamma}
\newcommand{\de}{\delta}
\newcommand{\De}{\Delta}
\newcommand{\ep}{\varepsilon}
\newcommand{\f}{\varphi}
\newcommand{\ome}{\omega}
\renewcommand{\(}{\left(}
\renewcommand{\)}{\right)}
\renewcommand{\lvert}{\left\vert}
\renewcommand{\rvert}{\right\vert}
\DeclareMathOperator*{\esssup}{ess\,sup}
\DeclareMathOperator{\supp}{supp}
\DeclareMathOperator{\dist}{dist}
\def\vect#1{\mbox{\boldmath $#1$}}
\begin{document}

\title{\bf Movement of time-delayed hot spots in Euclidean space}

\author{\bf Shigehiro Sakata and Yuta Wakasugi}

\date{\today}

\maketitle

\begin{abstract} 
We investigate the shape of the solution of
the Cauchy problem for the damped wave equation.
In particular, we study the existence, location and number of spatial maximizers of the solution.

Studying the shape of the solution of the damped wave equation,
we prepare a decomposed form of the solution into the heat part and the wave part.
Moreover, as its another application,
we give $L^p$-$L^q$ estimates of the solution.\\   

\no{\it Keywords and phrases}.
Damped wave equation,
diffusion phenomenon, hot spot, $L^p$-$L^q$ estimate.\\
\no 2010 {\it Mathematics Subject Classification}:
35L15, 35B38, 35C15, 35B40, 35K05.
\end{abstract}

\section{Introduction}
Let $f$ and $g$ be real-valued smooth functions defined on $\R^n$.
We consider the {\it damped wave equation} with initial data $(f,g)$,
\begin{equation}\label{DWfg}
\begin{cases}
\ds \( \frac{\pd^2}{\pd t^2} -\De + \frac{\pd}{\pd t} \) u(x,t) =0, &x \in \R^n ,\ t>0,\\
\ds \( u, \frac{\pd u}{\pd t} \) (x,0)=(f, g)(x), &x \in \R^n .
\end{cases}
\end{equation}

In the one-dimensional case,
the damped wave equation is known as the {\it telegrapher's equation} introduced by Oliver Heaviside
and describes the current and voltage in an electrical circuit with
resistance and inductance.
More generally, the equation \eqref{DWfg} is a model of
the propagation of the wave with friction or resistance.

The damped wave equation is also known as the {\it hyperbolic heat conduction equation}
introduced in [Cat, Ch, L, MF, V].
The classical model of the heat equation admits the infinite speed of the propagation of heat conduction, which is physically inadmissible.
Therefore, the equation \eqref{DWfg} was introduced to modify
the model of heat conduction with finite speed of the propagation.
In order to derive the equation \eqref{DWfg} as the hyperbolic heat conduction equation, along Li's framework in [L], 
let us consider the one-dimensional case as below:
Let $v(x,t)$ be the temperature at a point $x \in \R$ and
at time $t$; Let $q(x,t)$ be the heat flux at a point $x \in \R$ and at time $t$;
Then, the heat balance law implies
\begin{equation}\label{heat_balance}
	\frac{\pd v}{\pd t}(x,t) + \frac{\pd q}{\pd x} (x,t) =0,\ x \in \R ,\ t>0;
\end{equation}
From the time-delayed Fourier's law with a small enough positive parameter $\tau$
\begin{equation}\label{Fourier_delay}
	q(x,t+\tau ) \approx q(x,t) + \tau \frac{\pd q}{\pd t}(x,t)
	= -\frac{\pd v}{\pd x} (x,t) ,\ x \in \R ,\ t>0,
\end{equation}
instead of the usual Fourier's law
\begin{equation}\label{Fourier}
q(x,t)=-\frac{\pd v}{\pd x}(x,t) ,\ x \in \R ,\ t>0,
\end{equation}
we get the damped wave equation
\begin{equation}\label{dwv}
	\( \tau \frac{\pd^2}{\pd t^2} -\frac{\pd^2}{\pd x^2}
	+ \frac{\pd}{\pd t} \) v(x,t) =0 ,\ x\in \R ,\ t>0,
\end{equation}
with delay time $\tau$;
The additional term $\tau \pd^2 v / \pd t^2$
brings the finite propagation speed property to the equation (\ref{dwv});
By the scale transformation 
\begin{equation}\label{scale_trans}
	u(x,t) = \frac{1}{\tau} v\( \sqrt{\tau}x ,\tau t \) ,
\end{equation}
we obtain
\begin{equation}\label{dwu}
	\( \frac{\pd^2}{\pd t^2} -\frac{\pd^2}{\pd x^2}
		+ \frac{\pd}{\pd t} \) u(x,t) =0,\ x \in \R ,\ t>0.
\end{equation}

From such a background, as $t$ goes to infinity in (\ref{dwu})
(corresponding to the case where $\tau$ tends to $0^+$ in (\ref{dwv})),
it is expected that the solution of the damped wave equation
approaches to that of the (usual) heat equation, which is called the
{\it diffusion phenomenon} and has been studied by many researchers
([FG, HO, MN, Nar, Nis, YM]).

In this paper, 
we investigate the relation between the damped wave and heat equations in view of the study on the shapes of the solutions. Precisely, we give correspondence to
Chavel and Karp's results in [CK].
Let us review their works as below:
Let $P_n(t) \phi (x)$ be the unique bounded solution of
the Cauchy problem for the heat equation with bounded initial datum $\phi$, that is,
\begin{equation}\label{P}
P_n(t) \phi (x)= \frac{1}{\( 4\pi t\)^{n/2}}
\int_{\R^n} \exp \( -\frac{r^2}{4t} \) \phi (y) dy,\ x \in \R^n ,\ t>0,\ r=\lvert x-y \rvert ;
\end{equation}
When $\phi$ is a non-zero non-negative bounded function with compact support, they studied the behavior of the set of {\it hot spots}
\begin{equation}\label{H}
H_\phi (t) = \left\{ x \in \R^n \lvert P_n(t)\phi (x) = \max_{\xi \in \R^n} P_n (t) \phi (\xi ) \right\} \right. ;
\end{equation}
They showed that hot spots exist at each time $t$,
that all of them are contained in the convex hull of the support of $\phi$ for any time $t$,
and that the set $H_\phi (t)$ converges to the one-point set of the centroid
(the center of mass) of $\phi$ as $t$ goes to infinity;
Furthermore, calculating the Hessian of $P_n(t)\phi$,
in [JS], Jimbo and Sakaguchi indicated that the set of hot spots $H_\phi (t)$
consists of one point after a large time $t$.

There are many results on the study of hot spots besides [CK]. As examples, we introduce [JS, I1, I2, IK, FI1, FI2, S] as below: In [JS], Jimbo and Sakaguchi studied the large time behavior of hot spots in unbounded domains. For example, they considered the exterior domain of a ball with a radially symmetric initial datum. In this case, the explicit representation of solutions like \eqref{P} was still useful; In [I1, I2], Ishige studied the large time behavior of hot spots in the exterior domain of a ball without the radially symmetric assumption of [JS] by using the self-similar transformation and the eigenfunction expansion: In [IK], the similar approach to [I1, I2] was also applicable for the heat equation with a potential; In [FI1, FI2], Fujishima and Ishige studied the large time behavior of hot spots in $\R^n$ without the non-negativity of the initial datum $\phi$ and applied their investigation to the blow-up set of a semi-linear heat equation; In [S], the first author generalized the results in [CK, JS] in terms of a potential with a radially symmetric kernel.

In view of the diffusion phenomena,
it is expected that spatial maximizers of the solution of (\ref{DWfg})
have similar properties to hot spots shown in [CK, JS, S].
To this aim, 
when $f$ and $g$ are compactly supported,  and when $h:= f+g$ is non-zero and non-negative, 
we study the behavior of the set of {\it time-delayed hot spots}
\begin{equation}\label{dH} 
\mathcal{H}(t) = \left\{ x \in \R^n \lvert u(x,t)
= \max_{\xi \in \R^n} u(\xi, t) \right\} \right. .
\end{equation} 
Precisely, we show the following properties:
\begin{enumerate}
\item[(1)] After a large enough time, the set
$\mathcal{H}(t)$ is contained in the convex hull of the support of $h$.
Furthermore, for some small time, we give some examples of $(f,g)$ such that the set
$\mathcal{H}(t)$ escapes from the convex hull of the support of $h$
(Theorem \ref{movement} (1), Examples \ref{ex1}, \ref{ex2}, \ref{ex3} and \ref{ex4}).
\item[(2)] The set $\mathcal{H}(t)$ converges to
the one-point set of the centroid of $h$ as $t$ goes to infinity
(Theorem \ref{movement} (2)).
\item[(3)] After a large enough time, the set $\mathcal{H}(t)$ consists of one point
(Proposition \ref{uniqueness}).
\end{enumerate}
In order to understand the meaning of the above statements, we, for example, consider the case where $f$ is non-zero and non-negative, the maximum value of $f$ is greater than that of $h$, and the supports of $f$ and $h$ are separated. Since $u(x ,0) =f(x)$, for any sufficiently small $t>0$, all of the time-delayed hot spots are ``close'' to maximum points of $f$, that is, they are contained in the support of $f$ and not contained in the support of $h$. Roughly speaking, our main results claim that time-delayed hot spots move from the set of maximum points of $f$ to the centroid of $h$.

The above properties of time-delayed hot spots are due to the decomposition of the solution operator (fundamental solution)
$S_n(t)$ into the heat part and the wave part as
\begin{equation}\label{decomp}
S_n(t)g(x) = J_n(t) g(x) + e^{-t/2}\vect{W}_n(t)g(x).
\end{equation}
Here, $J_n(t)g(x)$ and $\vect{W}_n(t) g(x)$
are suitable functions behaving like as $P_n(t)g(x)$ and the solution of
the free wave equation with initial datum $(0,g)$,
respectively. 
The decomposition \eqref{decomp}
was firstly discovered by Nishihara in [Nis]
in the three-dimensional case
and so-called the
{\it Nishihara decomposition}.
In this paper, we give the generalization of the Nishihara decomposition
in higher dimensional cases to study the behavior of time-delayed hot spots.
Moreover, as its another application,
we slightly improve Narazaki's $L^p$-$L^q$ estimates given in [Nar].
As a by-product of the
$L^p$-$L^q$ estimates, we obtain
\begin{equation}\label{LpLq}
	\| u(\cdot ,t) - P_n(t)h \|_{L^{\infty}}
		\le C t^{-n/2-1}
		\( \| f \|_{L^1} + \|g\|_{L^1} + \| f \|_{W^{*,\infty}} + \|g\|_{W^{*,\infty}} \) .
\end{equation}
One may consider that the large time behavior of time-delayed hot spots can be easily investigated by combining the results in [CK] and the estimate \eqref{LpLq}. But the expectation is incorrect. This is because the difference between the values of $P_n(t)h$ in the convex hull of support of $h$ and its outside can have the order worse than $t^{-n/2-1}$. Hence the above estimate cannot exclude the possibility that time delayed hot spots escape from the convex hull of support of $h$ by the effect of the wave part. Therefore, we should obtain more precise information about the solution of the damped wave equation.\\

This paper is organized as follows. In Section 2, we give a representation of the solution of \eqref{DWfg} and its Nishihara decomposition.
In Section 3, we give preliminary estimates for our investigation. In Section 4, we investigate the movement of time-delayed hot spots. In Section 5, we give $L^p$-$L^q$ estimates of the difference between the damped wave and the heat by using the Nishihara decomposition described in Section 2.\\

\no{\bf Notation.} For the end of this section, we explain our notation.
\begin{itemize}
\item The letter $C$ indicates the generic constant which may change from line to line.
\item We denote the usual $L^p$ norm by $\|\cdot\|_{L^p}$, that is,
\begin{equation}
\| \phi \|_{L^p}=
\begin{cases}
\ds \( \int_{\R^n} \lvert \phi (x) \rvert^p dx\)^{1/p} &( 1\leq p <\infty ),\\
\ds \esssup_{x \in \R^n} \lvert \phi (x) \rvert &(p=\infty) .
\end{cases}
\end{equation}
\item For a natural number $\ell$, we denote the Sobolev norm by $\| \cdot \|_{W^{\ell ,\infty}}$, that is,
\begin{equation}
	\| \phi \|_{W^{\ell ,\infty}} =\sum_{|\al |\leq \ell} \| \pd^\al \phi \|_{L^\infty} .
\end{equation}
\item Let $B^n$ and $S^{n-1}$ be the $n$-dimensional unit closed ball and the $(n-1)$-dimensional unit sphere, respectively.
\item For real numbers $a$ and $b$, and for two sets $X$ and $Y$ in $\R^n$, we use the notation (Minkowski sum) $aX+bY = \left\{ ax+by \lvert x \in X,\ y\in Y \right\} \right.$. In particular, we write $B_t^n(x) =tB^n +\{ x \}$ and $S_t^{n-1}(x) = tS^{n-1}+ \{ x \}$, that is, the $n$-dimensional closed ball with radius $t$ centered at $x$ and the $(n-1)$-dimensional sphere with radius $t$ centered at $x$, respectively.
\item Let us denote by $CS(\phi )$ the convex hull of the support of a function $\phi$.
\item Let $\sigma_n$ denote the $n$-dimensional Lebesgue surface measure.
\item We understand that the letter $r$ is always used for $r=\lvert x-y \rvert$.
\end{itemize}

\no{\bf Acknowledgements.} The authors would like to express their deep gratitude to
Professor Tatsuo Nishitani and Professor Jun O'Hara for giving them valuable comments.

\section{Decomposition of the solution} 
\subsection{Solution formula of the damped wave equation}
In this subsection, we prepare the explicit form of the solution of the Cauchy problem \eqref{DWfg}. For a smooth function $g$, let us denote by $S_n(t)g$ the solution of the Cauchy problem 
\begin{equation}\label{DWg}
\begin{cases}
\ds \( \frac{\pd^2}{\pd t^2} -\De + \frac{\pd}{\pd t} \) u(x,t) =0, &x \in \R^n ,\ t>0,\\
\ds \( u, \frac{\pd u}{\pd t} \) (x,0)=(0, g)(x), &x \in \R^n.
\end{cases}
\end{equation}
The symbol $S_n(t)$ is called the solution operator of \eqref{DWg}.

Let $I_{\nu}(s)$ be the modified Bessel function of order $\nu$,
\begin{equation}
I_{\nu}(s)
=\sum_{j=0}^{\infty}\frac{1}{j!\Ga (j+\nu+1)}\(\frac{s}{2}\)^{2j+\nu}.
\end{equation}
Put
\begin{equation}\label{cn}
c_n = 
\begin{cases}
1 &(n =1),\\
\( (n-2)!! \sigma_{n-1}\( S^{n-1} \) \)^{-1} = 2^{-(n+1)/2} \pi^{-(n-1)/2} &(n \in 2\N +1),\\
\( (n-1)!! \sigma_{n}\( S^n \) \)^{-1} = 2^{-(n+2)/2} \pi^{-n/2} &(n\in 2\N),
\end{cases}
\end{equation}
where $(2\ell -1) !! = (2\ell -1 ) \cdot (2\ell -3 ) \cdot \cdots \cdot 3 \cdot 1$ and $(2\ell )!! = 2 \ell \cdot (2\ell -2) \cdot \cdots \cdot 4 \cdot 2$.

\begin{prop}\label{propsol}
Let $g$ be a smooth function. For any natural number $n$, 
we have
\[
S_n(t)g(x)=
\begin{cases}
	\ds \frac{e^{-t/2}}{2}\int_{x-t}^{x+t}
		I_0\(\frac{1}{2}\sqrt{t^2-r^2}\)g(y)dy &(n=1),\\
	\ds c_n e^{-t/2}
	\(\frac{1}{t}\frac{\pd}{\pd t}\)^{(n-1)/2}
	\int_{B_t^n(x)}I_0\(\frac{1}{2}\sqrt{t^2-r^2}\)g(y)dy &(n\in 2 \N +1 ),\\
	\ds 2c_n e^{-t/2}
	\(\frac{1}{t}\frac{\pd}{\pd t}\)^{(n-2)/2}
	\int_{B_t^n(x)}\frac{\cosh \( \frac{1}{2}\sqrt{t^2-r^2} \)}{\sqrt{t^2-r^2}}
	g(y)dy &(n\in 2\N ).
\end{cases}
\]
\end{prop}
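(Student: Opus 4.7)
The plan is to reduce the Cauchy problem \eqref{DWg} to a ``tachyonic'' Klein--Gordon equation via the substitution $u(x,t) = e^{-t/2}w(x,t)$. A short computation shows that $w$ satisfies
\begin{equation*}
\pd_t^2 w - \De w - \tfrac{1}{4} w = 0, \qquad w(x,0) = 0, \qquad \pd_t w(x,0) = g(x),
\end{equation*}
so that $S_n(t)g = e^{-t/2} W_n(t) g$, where $W_n(t)g$ denotes the solution of this Klein--Gordon-type problem. The negative sign in front of $w/4$ is what forces the modified Bessel function $I_0$---rather than the ordinary $J_0$ that would appear in the usual positive-mass Klein--Gordon case---into the Riemann function of the operator, and hence into the final formula.

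For $n=1$ I would appeal to the Riemann representation formula for second-order hyperbolic equations in two variables. Since the operator is self-adjoint, one verifies that $R(x,t;y,s) = I_0\bigl(\tfrac{1}{2}\sqrt{(t-s)^2-(x-y)^2}\bigr)$ satisfies the equation together with the required characteristic normalization on the light cone through $(x,t)$; substituting into Riemann's formula with data $(0,g)$ yields the claimed 1D expression. For odd $n\geq 3$ I would then pass to higher dimension via spherical means: setting $M_w(x,r,t) = \sigma_{n-1}(S^{n-1})^{-1}\int_{S^{n-1}} w(x+r\omega,t)\,d\sigma(\omega)$, the Euler--Poisson--Darboux identity together with the classical ascent formula shows that the $n$-dimensional solution is obtained from an auxiliary 1D problem by applying $(t^{-1}\pd_t)^{(n-1)/2}$ to the corresponding volume integral. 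Feeding the $n=1$ formula through this ascent produces the asserted odd-dimensional expression, and the constant $c_n$ emerges from $\sigma_{n-1}(S^{n-1})$ together with the double-factorial factor generated by the repeated differentiations.

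For even $n$ I would use Hadamard's method of descent from dimension $n+1$. Taking $g$ to depend only on the first $n$ coordinates and integrating out the extra variable converts the $(n+1)$-dimensional volume integral into an $n$-dimensional one, with the inner one-variable integral evaluated via the classical identity $\int_{-\rho}^{\rho} I_0\bigl(\tfrac{1}{2}\sqrt{\rho^2-s^2}\bigr)\,ds = 4\sinh(\rho/2)$ applied with $\rho=\sqrt{t^2-r^2}$. One of the $n/2$ powers of $(t^{-1}\pd_t)$ coming from the odd-dimensional formula is then absorbed by differentiating $\sinh\bigl(\tfrac{1}{2}\sqrt{t^2-r^2}\bigr)$ in $t$, producing the kernel $\cosh\bigl(\tfrac{1}{2}\sqrt{t^2-r^2}\bigr)/\sqrt{t^2-r^2}$ and leaving $(t^{-1}\pd_t)^{(n-2)/2}$; a short check confirms that $c_{n+1}=c_n$ for even $n$ in \eqref{cn}, so that the extra factor of $2$ produced by the descent matches the prefactor $2c_n$ in the even-dimensional formula.

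The main obstacle will be the bookkeeping of numerical constants through the ascent and descent. At each application of $(t^{-1}\pd_t)$ a factor of $(t^2-r^2)$ inside the Bessel or hyperbolic function gets differentiated, and one must invoke the identity $I_0'(s)=I_1(s)$ and the modified Bessel recursion carefully to reassemble everything into the compact form of the proposition with the exact prefactor $c_n$ from \eqref{cn}. A purely Fourier-analytic alternative is also available---the spatial Fourier transform reduces \eqref{DWg} to the ODE $\widehat K'' + \widehat K' + |\xi|^2 \widehat K = 0$ with $(\widehat K,\widehat K')(0)=(0,1)$, whose explicit solution $e^{-t/2}\sinh\bigl(t\sqrt{1/4-|\xi|^2}\bigr)/\sqrt{1/4-|\xi|^2}$ can in principle be inverted---but the spherical-mean route stays closer to the structure needed for the Nishihara decomposition treated in the next subsection and is therefore preferable here.
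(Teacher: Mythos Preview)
Your outline is correct and would yield the stated formulas, but the paper takes a different and somewhat slicker route. Rather than staying in $\R^n$ and working with the Klein--Gordon operator $\pd_t^2-\De-\tfrac14$, the paper \emph{adds a spatial variable}: writing $z=(x,\xi)\in\R^{n+1}$ and $w(z,t)=e^{(\xi+t)/2}u(x,t)$, one checks that $w$ solves the \emph{free} wave equation in $\R^{n+1}$ with data $(0,e^{\xi/2}g(x))$. The known $(n{+}1)$-dimensional wave formulas are then applied directly and the extra variable is integrated out. For odd $n$ this means using the even-dimensional Poisson formula (ball integral with kernel $(t^2-r^2)^{-1/2}$); the inner integral $\int_{-a}^{a}e^{s/2}(a^2-s^2)^{-1/2}\,ds=\pi I_0(a/2)$ produces the Bessel factor in one step. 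For even $n$ one uses the odd-dimensional Kirchhoff formula (sphere integral); splitting the sphere into the two hemispheres $y_{n+1}\gtrless\xi$ gives $e^{+\sqrt{t^2-r^2}/2}+e^{-\sqrt{t^2-r^2}/2}$, which is exactly the $\cosh$ kernel, again with no further manipulation.

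What this buys: the paper never has to handle a mass term inside the Euler--Poisson--Darboux machinery, never invokes a Riemann function, and avoids the bookkeeping you anticipate in tracking constants through repeated applications of $t^{-1}\pd_t$; all of the special-function content is concentrated in the single Bessel identity \eqref{Bessel_1}. Your approach has the complementary advantage of staying in the physical dimension $n$ and making the role of $I_0$ as the Riemann function of the reduced operator transparent, and your descent identity $\int_{-\rho}^{\rho} I_0\bigl(\tfrac12\sqrt{\rho^2-s^2}\bigr)\,ds=4\sinh(\rho/2)$ and the observation $c_{n+1}=c_n$ for even $n$ are both correct. Either route is acceptable, but if you want to match the paper you should replace the Riemann-function/spherical-means argument by the single substitution $w=e^{(\xi+t)/2}u$ lifting to the free wave equation in $\R^{n+1}$.
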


\begin{proof}
The argument is due to the method of descent described in [CH].

It is well known that the solution of the free wave equation 
\[
\begin{cases}
	\ds \( \frac{\pd^2}{\pd t^2}-\De \)
		w (x,t) =0,&(x,t)\in \R^n \times (0,\infty),\\
	\ds \( w, \frac{\pd w}{\pd t} \) (x,0)=(0,g)(x),&x\in\R^n 
\end{cases}
\]
is given by
\[
W_n(t)g(x)=
\begin{cases}
	\ds \frac{1}{2}\int_{x-t}^{x+t}g(y)dy &(n=1),\\
	\ds c_n
	\(\frac{1}{t}\frac{\pd}{\pd t}\)^{(n-3)/2}
	\(\frac{1}{t} \int_{S_t^{n-1}(x)} g(y) d\sigma_{n-1}(y) \) &(n\in 2\N +1),\\
	\ds 2c_n
	\(\frac{1}{t}\frac{\pd}{\pd t}\)^{(n-2)/2}
	\(\int_{B_t^n(x)} \frac{1}{\sqrt{t^2-r^2}}g(y)dy\) &(n \in 2\N ).
\end{cases}
\]

For
a point $x=(x_1,\ldots,x_n) \in \R^n$ and a number $\xi \in \R$,
we write
$z=(x,\xi) \in \R^{n+1}$.
Let $u(x,t)$ be the solution of \eqref{DWfg}, and $w(z,t) = \exp ( (\xi +t)/2) u(x,t)$.
Then,
$w(z,t)$
satisfies the
$(n+1)$-dimensional free wave equation with initial datum
$(w , \pd w/\pd t)(z,0)=( 0, e^{\xi/2}g(x) )$.

We first consider the one-dimensional case. Using the fact \eqref{Bessel_1}, we have
\begin{align*}
	u(x,t) 
	&=e^{-(\xi +t)/2} w(z,t) \\
	&=\frac{e^{-t/2}}{2\pi} \int_{B_t^2(z)} \frac{e^{\(y_2 -\xi \)/2}g\( y_1 \)}{\sqrt{t^2 -\lvert x- y_1 \rvert^2 -\lvert \xi - y_2 \rvert^2}} dy\\
	&=\frac{e^{-t/2}}{2\pi} \int_{x-t}^{x+t} g\( y_1 \) \( \int_{-\sqrt{t^2 -\lvert x-y_1 \rvert^2}}^{\sqrt{t^2 -\lvert x-y_1 \rvert^2}} \frac{e^{\(y_2 -\xi \)/2}}{\sqrt{t^2 -\lvert x- y_1 \rvert^2 -\lvert \xi - y_2 \rvert^2}} dy_2\) dy_1\\
	&=\frac{e^{-t/2}}{2} \int_{x-t}^{x+t} I_0 \( \frac{1}{2} \sqrt{t^2 -\lvert x -y_1 \rvert^2} \) g\( y_1 \) dy_1 .
\end{align*}

When $n$ is an odd number greater than one, as we have
\begin{align*}
	&\int_{B_t^{n+1}(z)}
	\frac{e^{\(y_{n+1} -\xi \)/2}g\( y'\)}{\sqrt{t^2 -\lvert x -y' \rvert^2 -\lvert \xi - y_{n+1} \rvert^2 }}dy \\
	& =\int_{B_t^n(x)} g\( y' \)
		\( \int_{-\sqrt{t^2-\lvert x-y'\rvert^2}}^{\sqrt{t^2-\lvert x-y'\rvert^2}}
		\frac{e^{\(y_{n+1} -\xi \)/2}}{\sqrt{t^2 -\lvert x -y' \rvert^2 -\lvert \xi - y_{n+1} \rvert^2 }}dy_{n+1} \)dy' \\
	& =\int_{B_t^n(x)} I_0 \( \frac{1}{2}  \sqrt{t^2 -\lvert x -y' \rvert^2} \) g\( y' \) dy' ,
\end{align*}
where $y= ( y' ,y_{n+1} ) \in \R^n \times \R$, we obtain the conclusion.

Finally, we consider even-dimensional cases. As we have
\begin{align*}
	&\frac{1}{t} \int_{S_t^n(z)} e^{-\xi /2} g(y) d \sigma_n (y) \\
	&=\frac{1}{t} \( \int_{S_t^n(z) \cap \left\{ y_{n+1} \geq \xi \right\}}
		+\int_{S_t^n(z) \cap \left\{ y_{n+1} \leq \xi \right\}} \) e^{-\xi /2} g(y) d\sigma_n (y) \\
	&=\int_{B_t^n(x)} \( e^{\sqrt{t^2 -\lvert x-y'\rvert^2}/2}
		+  e^{-\sqrt{t^2 -\lvert x-y'\rvert^2}/2} \) \frac{g\( y'\)}{\sqrt{t^2 -\lvert x-y'\rvert^2}} dy' \\
	&=2\int_{B_t^n(x)}
		\frac{\cosh \( \frac{1}{2} \sqrt{t^2 -\lvert x-y'\rvert^2}\)}{\sqrt{t^2 -\lvert x-y'\rvert^2}}
		g\( y' \) dy',
\end{align*}
the proof is completed.
\end{proof}

\begin{ex}\label{exsol}
{\rm We have the following formulas:
\begin{align*}
	S_1(t)g(x)&=\frac{e^{-t/2}}{2}\int_{x-t}^{x+t}
		I_0\(\frac{1}{2}\sqrt{t^2-r^2}\)g(y)dy,\\
	S_2(t)g(x)&=\frac{e^{-t/2}}{2\pi}
	\int_{B_t^2(x)}\frac{\cosh \( \frac{1}{2}\sqrt{t^2-r^2} \)}{\sqrt{t^2-r^2}}
	g(y)dy,\\
	S_3(t)g(x)&=\frac{e^{-t/2}}{4\pi}
	\( \frac{1}{t}\frac{\pd}{\pd t} \)
	\int_{B_t^3(x)}I_0\( \frac{1}{2}\sqrt{t^2-r^2}\)g(y)dy.
\end{align*}
These were given in [CH] in the same manner as in Proposition \ref{propsol}.}
\end{ex}

\begin{rem}\label{Duhamel}
{\rm Let $f$ and $g$ be smooth functions. Using the solution operator $S_n(t)$, we can express the solution of the Cauchy problem \eqref{DWfg} as
\[
u(x,t)=S_n(t) \( f+g \) (x)+\frac{\pd}{\pd t} S_n(t)f(x).
\]
}
\end{rem}
\subsection{Decomposition of the fundamental solution}

In this subsection, using Proposition \ref{propsol}, we give the Nishihara decomposed form of the solution operator $S_n(t)$.

\begin{thm}\label{thmdecom}
Let $g$ be a smooth function.
\begin{enumerate}[$(1)$]
\item Let $n$ be an odd number greater than one. Put
\begin{align*}
	k_\ell (s)&=\frac{1}{2^\ell}\sum_{j=0}^{\infty}
		\frac{1}{j!(j+\ell )!}\(\frac{s}{2}\)^{2j}
		=\frac{I_\ell (s)}{s^\ell},\\
	J_n(t)g(x)&=\frac{c_ne^{-t/2}}{2^{n-1}}
	\int_{B_t^n(x)}k_{\frac{n-1}{2}}\(\frac{1}{2}\sqrt{t^2-r^2}\)g(y)dy, \\
	\vect{W}_n(t)g(x)&=c_n
	\sum_{j=0}^{(n-3)/2}
	\frac{1}{8^j j!}\(\frac{1}{t}\frac{\pd}{\pd t}\)^{(n-3)/2-j}
	\(\frac{1}{t}\int_{S_t^{n-1}(x)}g(y)d\sigma_{n-1}(y) \) .
\end{align*}
Then, we have
\[
	S_n(t)g(x)=J_n(t)g(x)+e^{-t/2}\vect{W}_n(t)g(x) .
\]
Moreover, we have
\[
k_\ell (s) =\frac{1}{s^\ell} \frac{e^s}{\sqrt{2\pi s}} \(1-\frac{(\ell -1/2)(\ell +1/2)}{2s}+O\( \frac{1}{s^2} \) \) 
\]
as $s$ goes to infinity.
\item Let $n$ be an even number. Put
\begin{align*}
	k_\ell (s)
	&=\sum_{j=0}^{\infty}
		\frac{1}{ \( 2(j+\ell ) \) !!(2j+1)!!}s^{2j+1},\\
	J_n(t)g(x)
	&=
		\frac{c_ne^{-t/2}}{2^{n-2}}
		\int_{B_t^n(x)}k_{\frac{n}{2}}\(\frac{1}{2}\sqrt{t^2-r^2}\)g(y)dy, \\
	\vect{W}_n(t)g(x)
	&= 2 c_n
		\sum_{j=0}^{(n-2)/2}\frac{1}{8^j j!}
		\(\frac{1}{t}\frac{\pd}{\pd t}\)^{(n-2)/2-j}
		\int_{B_t^n(x)}\frac{1}{\sqrt{t^2-r^2}}g(y)dy.
\end{align*}
Then, we have
\[
	S_n(t)g(x)=J_n(t)g(x)+e^{-t/2}\vect{W}_n(t)g(x) .
\]
Moreover, we have
\[
k_\ell (s) =\frac{e^s}{2s^\ell}\(1-\frac{\ell (\ell -1)}{2s}+O\( \frac{1}{s^2} \)\) 
\]
as $s$ goes to infinity.
\end{enumerate}
\end{thm}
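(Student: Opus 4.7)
The plan is to iterate the operator $L := \frac{1}{t}\frac{\pd}{\pd t}$ on the explicit formulas from Proposition \ref{propsol}, tracking how it acts on $A_\ell := \int_{B_t^n(x)} k_\ell(\frac{1}{2}\sqrt{t^2-r^2}) g(y)\, dy$. The backbone is a one-step recursion of the form $L A_\ell = \frac{1}{4} A_{\ell+1} + (\text{wave piece})$, which after $a := (n-1)/2$ (odd case) or $b := (n-2)/2$ (even case) iterations yields the desired heat part together with a cascade of wave pieces hit by lower powers of $L$.

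For the odd case, I would exploit the fact that $k_\ell(s) = I_\ell(s)/s^\ell$, so the standard Bessel recursion $\frac{1}{s}\frac{dk_\ell}{ds} = k_{\ell+1}$ together with the chain rule gives $L\, k_\ell(\frac{1}{2}\sqrt{t^2-r^2}) = \frac{1}{4} k_{\ell+1}(\frac{1}{2}\sqrt{t^2-r^2})$. Differentiating the ball integral produces a boundary contribution with coefficient $k_\ell(0) = 1/(2^\ell \ell!)$, so
\[
L A_\ell = \frac{1}{4} A_{\ell+1} + \frac{1}{2^\ell \ell!}\cdot \frac{1}{t}\int_{S_t^{n-1}(x)} g\, d\sigma_{n-1}.
\]
A straightforward induction then gives $L^a A_0 = \frac{1}{4^a} A_a + \sum_{j=0}^{a-1} \frac{1}{4^j\, 2^j j!}\, L^{a-1-j}\left(\frac{1}{t}\int_{S_t^{n-1}(x)} g\, d\sigma_{n-1}\right)$, and multiplying by $c_n e^{-t/2}$ produces the claimed decomposition since $4^j 2^j = 8^j$ and $4^a = 2^{n-1}$.

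For the even case, the kernel $\cosh(s)/\sqrt{t^2-r^2}$ carries a $1/\sqrt{t^2-r^2}$ singularity at the boundary that obstructs the naive boundary computation, so I would first isolate it via $\cosh(s)/(2s) = 1/(2s) + k_1(s)/2$, which is just the closed form $k_1(s) = (\cosh s - 1)/s$ matching the $\ell = 1$ series definition. Term-by-term differentiation of the series yields the recursion $k_\ell'(s) = 1/(2\ell)!! + s\, k_{\ell+1}(s)$, and since $k_\ell(0) = 0$ the boundary term now vanishes identically, giving
\[
L A_\ell = \frac{1}{4} A_{\ell+1} + \frac{1}{2(2\ell)!!}\, W, \qquad W := \int_{B_t^n(x)} \frac{g(y)}{\sqrt{t^2-r^2}}\, dy.
\]
Iterating on $A_1$ and combining with the isolated $W$-piece from the initial split, the coefficients collapse via $\frac{1}{4^j}\cdot \frac{1}{2\,(2(j+1))!!} = \frac{2}{8^{j+1}(j+1)!}$ and reindex to the claimed weights $\frac{1}{8^j j!}$.

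Finally, the asymptotic in the odd case is the classical large-argument expansion of $I_\ell$, using $(4\ell^2 - 1)/8 = (\ell - \frac{1}{2})(\ell + \frac{1}{2})/2$. In the even case I would induct on $\ell$, starting from $k_1(s) = (\cosh s - 1)/s \sim e^s/(2s)$ (with exponentially small corrections) and propagating via $k_{\ell+1}(s) = k_\ell'(s)/s - 1/((2\ell)!!\, s)$: differentiating the assumed asymptotic for $k_\ell$ and subtracting the explicit $1/s$ correction yields the stated form for $k_{\ell+1}$. The main obstacle will be the coefficient bookkeeping in the even case, where the initial splitting, the repeated $1/(2\ell)!!$ contributions, and the prefactor $2 c_n$ must all conspire to produce the clean weights $1/(8^j j!)$; working out $n = 2, 4$ by hand exposes the telescoping pattern and makes the general induction transparent.
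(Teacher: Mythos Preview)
Your proposal is correct and follows essentially the same route as the paper: both arguments iterate $L=\frac{1}{t}\frac{\pd}{\pd t}$ on the ball integrals $A_\ell$, using the recursions $k_{\ell+1}=k_\ell'/s$ (odd case, producing the spherical boundary terms with coefficients $k_\ell(0)=1/(2^\ell\ell!)$) and $k_{\ell+1}=(k_\ell'-k_\ell'(0))/s$ (even case, producing the $W$-terms with coefficients $k_\ell'(0)=1/(2^\ell\ell!)$), and both derive the asymptotics from the classical expansion of $I_\ell$ in the odd case and from the closed form $k_1(s)=(\cosh s-1)/s$ together with the recursion in the even case. The only difference is cosmetic: the paper unrolls the first few steps explicitly and then says ``continuing this argument,'' whereas you package the same computation as a clean one-line recursion plus induction.
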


\begin{rem}
\label{recursion}
{\rm 
\begin{enumerate}[(1)]
\item Let $n$ be an odd number greater than one. From the fact \eqref{Bessel_2}, the kernel $k_\ell (s)$ has the following properties:
\[
k_{\ell +1} (s) = \frac{k_\ell'(s)}{s} ,\ k_\ell (0) = \frac{1}{2^\ell \ell !},\ k_\ell' (0) =0 .
\]
\item Let $n$ be an even number. In the proof of Theorem \ref{thmdecom}, we will show that the kernel $k_\ell (s)$ is defined by the following recursion:
\[
k_1(s)=\frac{\cosh(s)-1}{s},\ k_\ell (s)=\frac{k_{\ell-1}' (s)-k_{\ell-1}' (0)}{s}.
\]
In particular, we have the following properties:
\[
k_\ell (0)=0,\ k_\ell' (0)=\frac{1}{2^\ell \ell !}.
\]
\end{enumerate}
}
\end{rem}

\begin{proof}[Proof of Theorem \ref{thmdecom}]
(1) The solution formula in Proposition \ref{propsol} implies
\begin{align*}
	S_n(t)g(x)
	&=c_n e^{-t/2}
	\(\frac{1}{t}\frac{\pd}{\pd t}\)^{(n-3)/2}
	\(\frac{1}{t}\int_{S_t^{n-1}(x)}g(y)d\sigma_{n-1}(y) \)\\
	&\quad + c_n e^{-t/2} 
	\(\frac{1}{t}\frac{\pd}{\pd t}\)^{(n-3)/2}
	\int_{B_t^n(x)}\frac{I_1\(\frac{1}{2}\sqrt{t^2-r^2}\)}{2\sqrt{t^2-r^2}}g(y)dy.
\end{align*}
Note that the second term of the right-hand side can be written as
\[
	\frac{c_n e^{-t/2}}{4}
	\(\frac{1}{t}\frac{\pd}{\pd t}\)^{(n-3)/2}
	\int_{B_t^n(x)}k_1\(\frac{1}{2}\sqrt{t^2-r^2}\)g(y)dy.
\]
Since we have
\[
	\frac{1}{t}\(\frac{\pd}{\pd t}k_1\(\frac{1}{2}\sqrt{t^2-r^2}\)\)
	=\frac{k_1' \(\frac{1}{2}\sqrt{t^2-r^2}\)}{2\sqrt{t^2-r^2}}
	=\frac{1}{4}k_2\(\frac{1}{2}\sqrt{t^2-r^2}\right),
\]
we obtain
\begin{align*}
	&\frac{c_ne^{-t/2}}{4}
	\(\frac{1}{t}\frac{\pd}{\pd t}\)^{(n-3)/2}
	\int_{B_t^n(x)}k_1\(\frac{1}{2}\sqrt{t^2-r^2}\)g(y)dy\\
	&=\frac{c_ne^{-t/2}}{4}
	\(\frac{1}{t}\frac{\pd}{\pd t}\)^{(n-5)/2}
	\(\frac{k_1(0)}{t}\int_{S_t^{n-1}(x)}g(y)d\sigma_{n-1}(y) \)\\
	&\quad+
	\frac{c_n e^{-t/2}}{4^2}
	\(\frac{1}{t}\frac{\pd}{\pd t}\)^{(n-5)/2}
	\int_{B_t^n(x)}k_2\(\frac{1}{2}\sqrt{t^2-r^2}\)g(y)dy.
\end{align*}
Continuing this argument, we obtain the decomposed form of $S_n(t)g$.

The asymptotic expansions are direct consequences from \eqref{Bessel_3} and $k_{\ell+1}(s)=k_\ell' (s)/s$.

(2) By the solution formula in Proposition \ref{propsol}, we have
\begin{align*}
	S_n(t)g(x)
	&=2c_n e^{-t/2}
		\(\frac{1}{t}\frac{\pd}{\pd t}\)^{(n-2)/2}
		\int_{B_t^n(x)}\frac{1}{\sqrt{t^2-r^2}}g(y)dy\\
	&\quad + 2c_n e^{-t/2}
		\(\frac{1}{t}\frac{\pd}{\pd t}\)^{(n-2)/2}
		\int_{B_t^n(x)} \frac{\cosh \(\frac{1}{2}\sqrt{t^2-r^2}\)-1}{\sqrt{t^2-r^2}} g(y)dy.\\
	&=: V_1+K_1.
\end{align*}
Here, we note that
$K_1$
can be written as
\begin{align*}
	K_1
	&= c_n e^{-t/2}
		\(\frac{1}{t}\frac{\pd}{\pd t}\)^{(n-2)/2}
		\int_{B_t^n(x)}k_1\(\frac{1}{2}\sqrt{t^2-r^2}\) g(y)dy\\
	&= c_n e^{-t/2}
		\(\frac{1}{t}\frac{\pd}{\pd t}\)^{(n-4)/2}
		\int_{B_t^n(x)}
		\frac{k_1' \(\frac{1}{2}\sqrt{t^2-r^2}\)}{2\sqrt{t^2-r^2}} g(y)dy\\
	&= \frac{c_n e^{-t/2}}{2} k_1' (0)
		\(\frac{1}{t}\frac{\pd}{\pd t}\)^{(n-4)/2}
		\int_{B_t^n(x)}
		\frac{1}{\sqrt{t^2-r^2}} g(y)dy\\
	&\quad + \frac{c_n e^{-t/2}}{4}
		\(\frac{1}{t}\frac{\pd}{\pd t}\)^{(n-4)/2}
		\int_{B_t^n(x)}
		k_2\(\frac{1}{2}\sqrt{t^2-r^2}\) g(y)dy\\
	&=: V_2+K_2.
\end{align*}
In the same manner as in the calculation of $K_1$, we have
\begin{align*}
	K_2
	&= \frac{c_n e^{-t/2}}{2^3} k_2' (0)
		\(\frac{1}{t}\frac{\pd}{\pd t}\)^{(n-6)/2}
		\int_{B_t^n(x)}
		\frac{1}{\sqrt{t^2-r^2}} g(y)dy\\
	&\quad +\frac{c_n e^{-t/2}}{4^2}
		\(\frac{1}{t}\frac{\pd}{\pd t}\)^{(n-6)/2}
		\int_{B_t^n(x)}
		k_3\(\frac{1}{2}\sqrt{t^2-r^2}\) g(y)dy\\
	&=: V_3+K_3.
\end{align*}
Continuing this argument, we obtain
\[
	V_j = \frac{c_n e^{-t/2}}{2^{2 j -3}} k_{j-1}' (0)
	\(\frac{1}{t}\frac{\pd}{\pd t}\)^{n/2-j}
	\int_{B_t^n(x)}
	\frac{1}{\sqrt{t^2-r^2}} g(y)dy
\]
and the decomposed form of $S_n(t) g$.

The asymptotic expansions follow from
\[
	k_1(s)=\frac{e^s}{2s}+\frac{e^{-s}}{2s}-\frac{1}{s}
\]
and the recursion of $k_\ell (s)$.
\end{proof}

\begin{rem}\label{decom1}
{\rm 
Let $g$ be a smooth function. Put
\begin{align*}
J_1(t)g(x)&=\frac{e^{-t/2}}{2}\int_{x-t}^{x+t}
		\( I_0\(\frac{1}{2}\sqrt{t^2-r^2}\)-1 \) g(y)dy,\\
\vect{W}_1(t)g(x) &=W_1(t)g(x) =\frac{1}{2}\int_{x-t}^{x+t}g(y)dy .
\end{align*}
Thanks to Proposition \ref{propsol}, we get the Nishihara decomposed form of $S_1(t)$ as
\[
S_1(t)g(x) = J_1(t)g (x) +e^{-t/2} \vect{W}_1(t)g(x)
\]
which was given in [MN].
}
\end{rem}

\begin{ex}\label{decom23}
{\rm We have the following formulas:
\begin{align*}
	J_2(t)g(x)&=\frac{e^{-t/2}}{2\pi}\int_{B_t^2(x)}
		\frac{\cosh \( \frac{1}{2}\sqrt{t^2-r^2} \)-1}{\sqrt{t^2-r^2}}
		g(y)dy,\\
	J_3(t)g(x)&=\frac{e^{-t/2}}{4\pi} \int_{B_t^3(x)}
		\frac{I_1\( \frac{1}{2}\sqrt{t^2-r^2} \)}{2\sqrt{t^2-r^2}}
		g(y)dy,\\
	\vect{W}_2(t)g(x) &=W_2(t)g(x) =\frac{1}{2\pi}\int_{B_t^2(x)}
		\frac{1}{\sqrt{t^2-r^2}}g(y)dy,\\
	\vect{W}_3(t)g(x) &=W_3(t)g(x) =\frac{1}{4\pi t}\int_{S_t^2 (x)}g(y)d\sigma_2 (y).
\end{align*}
These were given in [HO, Nis] using the explicit form of $S_n(t)g(x)$ given in Example \ref{exsol}.
}
\end{ex}

In [HO, MN, Nis], 
it was shown that
$J_n(t)g$
behaves like
$P_n(t)g$
as
$t$ goes to infinity.
More precisely, 
in the case of $1\leq n \leq 3$, 
for $t>0$ and $1\le q\le p\le\infty$, 
the $L^p$-$L^q$ estimate
\begin{equation}\label{estimate_low}
	\|J_n(t)g-P_n(t)g\|_{L^p}\leq
		Ct^{-\frac{n}{2}\(\frac{1}{q}-\frac{1}{p}\)-1} \| g \|_{L^q}
\end{equation}
was shown.

The Nishihara decomposition and the estimate \eqref{estimate_low}
imply the following properties of the damped wave equation
(see [Nis, pp. 632]):
\begin{itemize}
	\item The damped wave equation does not have the smoothing effect,
and the singularity of the initial datum propagates along the light cone
by the wave property.
However, the strength of the singularity decays exponentially
by the damping effect.
	\item If the initial datum is sufficiently smooth, then
the damped wave equation may have the same properties as those to parabolic
equations under some suitable situations. 
\end{itemize}

For the case of $n\ge 4$, in [Nar], Narazaki proved a similar decomposition to Theorem \ref{thmdecom} in the Fourier space.
However, the explicit form of the decomposition in the configuration space was not known. In section 5, using Theorem \ref{thmdecom}, we will give an estimate of the difference $S_n(t)g- P_n(t)g -e^{-t/2}\vect{W}_n(t)g$, which is slightly sharper than that of [Nar].

\subsection{Decomposition of the solution with general initial datum}
In this subsection, we give the Nishihara decomposed form of the solution of \eqref{DWfg}.

\begin{prop}\label{prop_de_fg}
Let $f$ and $g$ be smooth functions, $h=f+g$, and $u$ the unique classical solution of the Cauchy problem \eqref{DWfg}.
\begin{enumerate}[$(1)$]
\item Let $n=1$. Put
\begin{align*}
	\tilde{J}_1(t)f(x)
	&=\frac{e^{-t/2}}{4}\int_{x-t}^{x+t} \( \frac{t}{\sqrt{t^2-r^2}}
			I_1 \( \frac{1}{2}\sqrt{t^2-r^2} \)  -I_0 \( \frac{1}{2}\sqrt{t^2-r^2} \) \) f(y) dy, \\
	\widehat{\vect{W}}_1(t)f(x) &= \frac{1}{2} \( f(x+t) + f(x-t) \) ,\\
	\widetilde{\vect{W}}_1(t;f,g) (x) &= \vect{W}_1(t) h(x)  + \widehat{\vect{W}}_1(t)f(x) .
\end{align*}
Then, we have
\[
\frac{\pd}{\pd t} S_1 (t) f(x) = \tilde{J}_1 (t) f(x) + e^{-t/2} \widehat{\vect{W}}_1(t) f(x).
\]
In other words, the solution $u(x,t)$ is expressed as 
\[
	u(x,t) = J_1(t) h(x) + \tilde{J}_1(t) f( x ) + e^{-t/2}\widetilde{\vect{W}}_1(t;f,g)(x).
\]
\item Let $n$ be an odd number greater than one. Put
\begin{align*}
	\tilde{J}_n(t) f(x) &=
	\frac{c_n e^{-t/2}}{2^{n+1}}
		\int_{B_t^n(x)}
				\( t k_{\frac{n+1}{2}}\(\frac{1}{2}\sqrt{t^2-r^2}\) -2k_{\frac{n-1}{2}} \( \frac{1}{2}\sqrt{t^2 -r^2} \) \) f(y)dy ,\\
	\widehat{\vect{W}}_n(t)f(x) &=
	\frac{c_n}{2^{\frac{3(n-1)}{2}} \( \frac{n-1}{2} \) !}
			\int_{S_t^{n-1}(x)}f(y)d\sigma_{n-1}(y) ,\\
	\widetilde{\vect{W}}_n(t; f, g)(x) &= \frac{1}{2}\vect{W}_n(t)f(x) + \vect{W}_n(t)g(x)
			+ \widehat{\vect{W}}_n(t)f(x)  + \frac{\pd}{\pd t} \vect{W}_n(t)f(x) .
\end{align*}
Then, we have
\[
	\frac{\pd}{\pd t} S_n(t) f(x)
	=\tilde{J}_n(t)f(x) + e^{-t/2} \widehat{\vect{W}}_n(t)f(x)
		-\frac{e^{-t/2}}{2} \vect{W}_n(t)f(x) + e^{-t/2} \frac{\pd}{\pd t} \vect{W}_n(t)f(x).
\]
In other words, the solution $u(x,t)$ is expressed as
\[
	u( x, t )=J_n(t) h (x) + \tilde{J}_n(t) f (x) + e^{-t/2} \widetilde{\vect{W}}_n(t; f, g)(x).
\]
\item Let $n$ be an even number. Put
\begin{align*}
	\tilde{J}_n(t) f(x) &=
	\frac{c_n e^{-t/2}}{2^n}
		\int_{B_t^n(x)}\( t k_{\frac{n}{2}+1}\(\frac{1}{2}\sqrt{t^2-r^2}\)
		-2k_{\frac{n}{2}}\(\frac{1}{2}\sqrt{t^2-r^2}\)\)f(y)dy, \\
	\widehat{\vect{W}}_n(t)f(x) &=
	\frac{c_n t}{2^{\frac{3n-2}{2}} \( \frac{n}{2} \) !}
			\int_{B_t^n(x)}\frac{1}{\sqrt{t^2-r^2}} f(y) dy, \\
	\widetilde{\vect{W}}_n(t; f, g)(x) &= \frac{1}{2}\vect{W}_n(t)f(x) + \vect{W}_n(t)g(x)
			+ \widehat{\vect{W}}_n(t)f(x) + \frac{\pd}{\pd t} \vect{W}_n(t)f(x) .
\end{align*}
Then, we have
\[
	\frac{\pd}{\pd t} S_n(t) f(x)
	=\tilde{J}_n(t)f(x) + e^{-t/2} \widehat{\vect{W}}_n(t)f(x)
		-\frac{e^{-t/2}}{2} \vect{W}_n(t)f(x) + e^{-t/2} \frac{\pd}{\pd t} \vect{W}_n(t)f(x).
\]
In other words, the solution $u(x,t)$ is expressed as
\[
	u( x, t )=J_n(t) h (x) + \tilde{J}_n(t) f (x) + e^{-t/2} \widetilde{\vect{W}}_n(t; f, g)(x).
\]
\end{enumerate}
\end{prop}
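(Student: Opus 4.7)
The plan is to start from Remark~\ref{Duhamel}, which gives
\[
u(x,t) = S_n(t)h(x) + \frac{\partial}{\partial t} S_n(t)f(x),
\]
and apply Theorem~\ref{thmdecom} term by term. The first piece contributes $J_n(t)h + e^{-t/2}\vect{W}_n(t)h$ immediately, and writing $h = f+g$ splits the wave part as $\vect{W}_n(t)f + \vect{W}_n(t)g$. All the work therefore reduces to identifying $\frac{\partial}{\partial t} S_n(t)f$ as $\tilde{J}_n(t)f + e^{-t/2}\widehat{\vect{W}}_n(t)f$ (for $n=1$) or the same thing plus $-\frac{e^{-t/2}}{2}\vect{W}_n(t)f + e^{-t/2}\frac{\partial}{\partial t}\vect{W}_n(t)f$ (for $n\ge 2$); the resulting wave pieces then regroup into $e^{-t/2}\widetilde{\vect{W}}_n(t;f,g)$ by definition.

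For $n \ge 2$ I would first write $S_n(t)f = J_n(t)f + e^{-t/2}\vect{W}_n(t)f$ via Theorem~\ref{thmdecom} and differentiate; the $e^{-t/2}$ factor automatically produces the two ``obvious'' wave terms, so only $\frac{\partial}{\partial t} J_n(t)f$ needs to be identified. Passing to polar coordinates $r=|x-y|$, the ball integral defining $J_n(t)f$ becomes $\int_0^t k_\ell(\frac{1}{2}\sqrt{t^2-r^2})\,A(r)\,dr$, where $\ell = (n-1)/2$ or $n/2$ and $A(r) = \int_{S_r^{n-1}(x)} f\,d\sigma_{n-1}$. Differentiating in $t$ yields a boundary contribution $k_\ell(0)A(t)$ together with an interior integrand $\frac{t}{2\sqrt{t^2-r^2}}\,k_\ell'(\frac{1}{2}\sqrt{t^2-r^2})\,A(r)$, which is then simplified by the recursion in Remark~\ref{recursion}.

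In the odd case $n>1$, the identity $k_\ell'(s) = s\,k_{\ell+1}(s)$ collapses the interior integrand to $\frac{t}{4} k_{(n+1)/2}(\frac{1}{2}\sqrt{t^2-r^2})$, and combining it with the contribution $-\frac{1}{2} J_n(t)f$ coming from differentiating $e^{-t/2}$ reproduces exactly the bracketed kernel $tk_{(n+1)/2} - 2k_{(n-1)/2}$ of $\tilde{J}_n(t)f$; the boundary term, weighted by $k_{(n-1)/2}(0) = 2^{-(n-1)/2}/((n-1)/2)!$, collapses to $e^{-t/2}\widehat{\vect{W}}_n(t)f$ after collecting constants via $2^{n-1}\cdot 2^{(n-1)/2} = 2^{3(n-1)/2}$. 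In the even case the boundary term vanishes because $k_{n/2}(0)=0$, but the finer recursion $k_\ell'(s) = k_\ell'(0) + s\,k_{\ell+1}(s)$ now splits the interior integrand into a $k_{n/2+1}$ part that builds $\tilde{J}_n(t)f$ and a residual piece $\frac{t\,k_{n/2}'(0)}{2\sqrt{t^2-r^2}}$ which, using $k_{n/2}'(0) = 2^{-n/2}/(n/2)!$ and $2^{n-1}\cdot 2^{n/2} = 2^{(3n-2)/2}$, becomes precisely $e^{-t/2}\widehat{\vect{W}}_n(t)f$.

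For $n=1$ the scheme simplifies: differentiate the explicit formula for $S_1(t)f$ in Proposition~\ref{propsol} directly, using $I_0(0)=1$ and $I_0' = I_1$. The boundary gives $e^{-t/2}\widehat{\vect{W}}_1(t)f = \frac{e^{-t/2}}{2}(f(x+t)+f(x-t))$, while combining $-\frac{1}{2} S_1(t)f$ with the interior derivative $\frac{tI_1(\frac{1}{2}\sqrt{t^2-r^2})}{2\sqrt{t^2-r^2}}$ yields exactly the integrand $\frac{tI_1}{\sqrt{t^2-r^2}} - I_0$ defining $\tilde{J}_1(t)f$; no separate $\vect{W}_1$ correction survives here because $I_0$ already carries the constant contribution that Remark~\ref{decom1} assigns to $\vect{W}_1$ in higher dimensions. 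The main obstacle I anticipate is purely arithmetic bookkeeping — tracking the factors $c_n$, the powers of $2$, and the factorials through the recursion so that each $\widehat{\vect{W}}_n$ coefficient comes out exactly as stated in the proposition.
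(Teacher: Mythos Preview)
Your proposal is correct and follows essentially the same approach as the paper: start from Remark~\ref{Duhamel}, apply the decomposition of Theorem~\ref{thmdecom} (resp.\ Proposition~\ref{propsol} directly for $n=1$), and compute $\frac{\pd}{\pd t}J_n(t)f$ by differentiating under the integral sign and handling the moving boundary via the recursions of Remark~\ref{recursion}. The paper's proof is simply more terse, writing the outcome of the differentiation in one step where you spell out the polar-coordinate bookkeeping and the constants explicitly.
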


\begin{proof}
(1) From Proposition \ref{propsol}, direct computation shows
\begin{align*}
	\frac{\pd}{\pd t}S_1(t) f(x) &=
		\frac{e^{-t/2}}{4}\int_{x-t}^{x+t} \( \frac{t}{\sqrt{t^2-r^2}} I'_0 \( \frac{1}{2}\sqrt{t^2-r^2} \)-I_0 \( \frac{1}{2}\sqrt{t^2-r^2} \) \) f(y) dy\\
		&\quad +\frac{e^{-t/2}}{2} \( f(x+t) + f(x-t) \) \\
	&= \tilde{J}_1(t)f(x) + e^{-t/2}\widehat{\vect{W}}_1(t)f(x)  .
\end{align*}
Here, we used $I_0'(s) = I_1(s)$ (see \eqref{Bessel_2}). From Remark \ref{Duhamel}, we get the conclusion.

(2) From Remark \ref{recursion}, we have
\begin{align*}
	\frac{\pd}{\pd t}J_n(t)f(x)&=\frac{c_n}{2^{n-1}}
		\int_{B_t^n(x)}\frac{\pd}{\pd t}
				\( e^{-t/2}k_{\frac{n-1}{2}}\(\frac{1}{2}\sqrt{t^2-r^2}\)\) f(y)dy\\
		&\quad +\frac{c_n e^{-t/2}}{2^{\frac{3(n-1)}{2}} \( \frac{n-1}{2} \) !}
			\int_{S_t^{n-1}(x)}f(y)d\sigma_{n-1}(y)\\
		&= \tilde{J}_n(t)g(x) + e^{-t/2}\widehat{\vect{W}}_n(t)g(x).
\end{align*}
From Remark \ref{Duhamel}, we get the conclusion.

(3) From Remark \ref{recursion}, we have
\begin{align*}
\frac{\pd}{\pd t} J_n(t)g(x)
&=\frac{c_n}{2^{n-2}} \int_{B_t^n(x)}\frac{\pd}{\pd t} \(e^{-t/2} k_{\frac{n}{2}}\(\frac{1}{2}\sqrt{t^2-r^2}\)\)g(y)dy \\
&= \tilde{J}_n(t)g(x) +e^{-t/2}\widehat{\vect{W}}_n(t)g(x).
\end{align*}
From Remark \ref{Duhamel}, we get the conclusion.
\end{proof}

\section{Preliminary estimates}
In this section, we prepare some estimates
which will be frequently used for studying the behavior of time-delayed hot spots.
We postpone the proofs of the following lemmas until Appendix 1. This is because their proofs consist of tedious calculations.

\begin{lem}\label{lem_est_Wn}
Let $f$ and $g$ be smooth bounded functions defined on $\R^n$.

\begin{enumerate}[$(1)$]
\item For any $x \in \R$, $t >0$ and $\al = (\al_1 , \al_2 ) \in \Z_{\geq 0}^2$, we have the following inequalities:
\begin{align*}
	\lvert \frac{\pd^{\lvert \al \rvert}}{\pd x^{\al_1} \pd t^{\al_2}}\vect{W}_1(t)g(x) \rvert
		& \le (1+t) \| g\|_{W^{\lvert \al \rvert, \infty}} ,\\
	\lvert \frac{\pd^{\lvert \al \rvert}}{\pd x^{\al_1} \pd t^{\al_2}} \widehat{\vect{W}}_1(t)f(x) \rvert
	&\le \| f \|_{W^{\lvert \al \rvert,\infty}},\\
      \lvert \frac{\pd^{\lvert \al \rvert}}{\pd x^{\al_1} \pd t^{\al_2}} \widetilde{\vect{W}}_1(t;f,g)(x) \rvert
	&\le 2 (1+t) \( \| f \|_{W^{\lvert \al \rvert , \infty}} + \| g \|_{W^{\lvert \al \rvert,\infty}} \) ,
\end{align*}
where $\lvert \al \rvert=\al_1 +\al_2$.
\item Let $n$ be an odd number greater than one. There exists a positive constant $C=C(n)$ such that, for any $x \in \R^n$, $t  >0$ and $\al = (\al_1 ,\ldots , \al_{n+1}) \in \Z_{\geq 0}^{n+1}$, we have the following inequalities:
\begin{align*}
	\lvert \frac{\pd^{\lvert \al \rvert}}{\pd x_1^{\al_1} \cdots \pd x_n^{\al_n} \pd t^{\al_{n+1}}}\vect{W}_n(t)g(x) \rvert
		&\le C (1+t)^{n-2} \| g\|_{W^{(n-3)/2+\lvert \al \rvert, \infty}} ,\\
	\lvert \frac{\pd^{\lvert \al \rvert}}{\pd x_1^{\al_1} \cdots \pd x_n^{\al_n} \pd t^{\al_{n+1}}} \widehat{\vect{W}}_n(t)f(x) \rvert
	&\le C (1+t)^{n-1} \| f \|_{W^{\lvert \al \rvert,\infty}},\\
      \lvert \frac{\pd^{\lvert \al \rvert}}{\pd x_1^{\al_1} \cdots \pd x_n^{\al_n} \pd t^{\al_{n+1}}} \widetilde{\vect{W}}_n(t;f,g)(x) \rvert
	&\le C (1+t)^{n-1} \(  \| f \|_{W^{(n-1)/2+\lvert \al \rvert,\infty}} +  \| g \|_{W^{(n-3)/2+\lvert \al \rvert,\infty}} \) ,
\end{align*}
where $\lvert \al \rvert =\al_1 + \cdots + \al_{n+1}$.
\item Let $n$ be an even number greater than one. There exists a positive constant $C=C(n)$ such that, for any $x \in \R^n$, $t >0$ and $\al = (\al_1 ,\ldots , \al_{n+1}) \in \Z_{\geq 0}^{n+1}$, we have the following inequalities:
\begin{align*}
	\lvert \frac{\pd^{\lvert \al \rvert}}{\pd x_1^{\al_1} \cdots \pd x_n^{\al_n} \pd t^{\al_{n+1}}} \vect{W}_n(t)g(x) \rvert
		&\le C(1+t)^{n-1} \| g\|_{W^{n/2-1+\lvert \al \rvert, \infty}} ,\\
	\lvert \frac{\pd^{\lvert \al \rvert}}{\pd x_1^{\al_1} \cdots \pd x_n^{\al_n} \pd t^{\al_{n+1}}} \widehat{\vect{W}}_n(t)f(x) \rvert
	&\le C (1+t)^n \| f \|_{W^{\lvert \al \rvert,\infty}},\\
      \lvert \frac{\pd^{\lvert \al \rvert}}{\pd x_1^{\al_1} \cdots \pd x_n^{\al_n} \pd t^{\al_{n+1}}} \widetilde{\vect{W}}_n(t;f,g)(x) \rvert
	&\le C (1+t)^n \( \| f \|_{W^{n/2 +\lvert  \al \rvert,\infty}}  + \| g \|_{W^{n/2 -1+\lvert  \al \rvert,\infty}} \) ,
\end{align*}
where $\lvert \al \rvert =\al_1 + \cdots + \al_{n+1}$.
\end{enumerate}
\end{lem}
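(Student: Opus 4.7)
The plan is to reduce each estimate to bounds on spherical or ball means of $f$ and $g$ and then to handle the operator $(\frac1t\pd_t)^k$ by a combinatorial induction. The case $n=1$ is immediate: $\vect{W}_1(t)g$ is an integral over an interval of length $2t$, and derivatives can be passed onto $g$ using the boundary identity $\pd_t \vect{W}_1(t)g(x) = \frac12(g(x+t)+g(x-t))$, giving the linear factor in $t$ and no loss of regularity beyond the $|\al|$ derivatives appearing on the left. The corresponding bound on $\widetilde{\vect{W}}_1(t;f,g) = \vect{W}_1(t)h + \widehat{\vect{W}}_1(t)f$ then follows from the triangle inequality.

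For odd $n>1$, I would rewrite $\frac1t\int_{S_t^{n-1}(x)} g(y)\,d\sigma_{n-1}(y) = \sigma_{n-1}(S^{n-1})\,t^{n-2}M_g(x,t)$, where $M_g(x,t) := |S^{n-1}|^{-1}\int_{S^{n-1}} g(x+t\omega)\,d\sigma(\omega)$ is the spherical mean. The key identity, proved by induction on $k$, is
\[
 \(\frac1t\frac{\pd}{\pd t}\)^k \bigl[t^{n-2} M_g(x,t)\bigr] = \sum_{i=0}^k a_{k,i}\, t^{n-2-2k+i}\,\pd_t^i M_g(x,t)
\]
with explicit constants $a_{k,i}$. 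Since $n$ is odd and $k\le(n-3)/2$, the exponents $n-2-2k+i$ are non-negative, and because $M_g$ is smooth and even in $t$, no blow-up as $t\to 0^+$ occurs. The pointwise bound $|\pd_t^i M_g|\le \|g\|_{W^{i,\infty}}$, obtained by differentiating under the spherical mean, then yields term by term an estimate of order $(1+t)^{n-2}\|g\|_{W^{(n-3)/2,\infty}}$. Spatial derivatives pass through the spherical integration, and additional $t$-derivatives are treated via $\pd_t = t\cdot(\frac1t\pd_t)$ (picking up at most one extra power of $t$ per derivative) or by converting to $\Delta$ through the wave equation $\pd_t^2 w = \Delta w$ satisfied by $\vect{W}_n(t)g$.

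For even $n$, the analogous starting point is the radial change of variables
\[
 \int_{B_t^n(x)}\frac{g(y)}{\sqrt{t^2-r^2}}\,dy = \sigma_{n-1}(S^{n-1})\int_0^{\pi/2} t^{n-1}\sin^{n-1}\theta\,\tilde M_g(x,t\sin\theta)\,d\theta,
\]
after setting $r=t\sin\theta$; this is manifestly smooth in $t\ge 0$. An induction of the same flavour processes $(\frac1t\pd_t)^k$ applied to this quantity and produces the $(1+t)^{n-1}\|g\|_{W^{n/2-1,\infty}}$ estimate for $\vect{W}_n(t)g$. The $\widehat{\vect{W}}_n(t)f$ bound is more direct: the expression is a constant multiple of a surface integral (odd $n$) or a weighted ball integral (even $n$), whose measure contributes the factors $(1+t)^{n-1}$ or $(1+t)^n$, respectively. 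For $n>1$, $\widetilde{\vect{W}}_n(t;f,g) = \tfrac12\vect{W}_n(t)f+\vect{W}_n(t)g+\widehat{\vect{W}}_n(t)f+\pd_t\vect{W}_n(t)f$, and the combined bound follows from the triangle inequality; the extra unit of regularity appearing on the $f$-side is explained by the presence of $\pd_t\vect{W}_n(t)f$, which by the preceding discussion costs one additional derivative of $f$.

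The main obstacle is setting up the $(\frac1t\pd_t)^k$ reduction cleanly: checking that no negative powers of $t$ appear, tracking the explicit coefficients $a_{k,i}$, and verifying smoothness at $t=0$. This is achieved by exploiting the evenness of $M_g(x,\cdot)$ for odd $n$ and the analytic structure produced by the $r=t\sin\theta$ substitution for even $n$. Once that core calculation is in place, mixed $x$-$t$ derivatives of arbitrary multi-index $\al$ are handled by the same induction with $g$ replaced by $\pd_x^\be g$ and the $t$-derivative conversions described above; these reductions are what make the Appendix's calculations long but mechanical.
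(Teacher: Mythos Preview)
Your approach is essentially the paper's: both rescale the integration domain so that the $t$-dependence becomes a polynomial prefactor times a smooth function, and then track how $(\tfrac1t\pd_t)^k$ redistributes powers of $t$ and derivatives of the data. The paper's Appendix is terser --- for even $n$ it simply writes $\int_{B_t^n(x)}\frac{g(y)}{\sqrt{t^2-r^2}}\,dy = t^{n-1}\int_{B^n}\frac{g(x+tz)}{\sqrt{1-|z|^2}}\,dz$ (your $r=t\sin\theta$ is a radial variant of this same rescaling) and then observes that among the summands $j=0,\dots,(n-2)/2$ defining $\vect{W}_n(t)g$, the term $j=(n-2)/2$ gives the worst growth in $t$ while $j=0$ gives the worst regularity loss. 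Your explicit induction formula for $(\tfrac1t\pd_t)^k[t^{n-2}M_g]$ is a more detailed version of exactly this bookkeeping.

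One small caution: $\vect{W}_n(t)g$ does \emph{not} satisfy the free wave equation once $n\geq 4$ (only the $j=0$ summand does), so your alternative of trading $\pd_t^2$ for $\Delta$ is not available in general. This is harmless, since your primary device --- the direct induction on $(\tfrac1t\pd_t)^k$ --- already handles all $t$-derivatives: applying $\pd_t$ to $t^p\,\pd_t^iM_g$ never raises the power of $t$, so no extra growth factor is incurred and only the Sobolev index increases, exactly as the statement requires.
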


\begin{lem}\label{list_J}
Let $h:\R^n \to \R$ be a smooth function.
\begin{enumerate}[$(1)$]
\item Let $n=1$. We have the following identities:
\begin{align*}
\frac{\pd}{\pd x} J_1(t)h(x)
&=-\frac{e^{-t/2}}{4} \int_{x-t}^{x+t} \frac{1}{\sqrt{t^2-r^2}} I_1 \( \frac{1}{2} \sqrt{t^2-r^2} \) h(y) (x-y) dy ,\\
\frac{\pd^2}{\pd x^2} J_1(t)h(x) 
&=\frac{te^{-t/2}}{8} \( h(x+t) +h(x-t) \) \\
&\quad -\frac{e^{-t/2}}{4} \int_{x-t}^{x+t} \frac{1}{\sqrt{t^2-r^2}} I_1 \( \frac{1}{2} \sqrt{t^2-r^2} \) h(y) dy \\
&\quad +\frac{e^{-t/2}}{8} \int_{x-t}^{x+t} \frac{1}{t^2-r^2} I_2 \( \frac{1}{2} \sqrt{t^2-r^2} \) h(y) (x-y)^2 dy .
\end{align*}
\item Let $n$ be an odd number greater than one. For any direction $\ome \in S^{n-1}$, we have the following identities:
\begin{align*}
\nabla J_n(t)h(x)
&= \frac{c_ne^{-t/2}}{2^{n-1}} k_{\frac{n-1}{2}}(0) t^{n-1} \int_{S^{n-1}} h(x+t\theta ) \theta d\sigma_{n-1} (\theta ) \\
&\quad - \frac{c_ne^{-t/2}}{2^{n+1}} \int_{B^n_t(x)} k_{\frac{n+1}{2}} \( \frac{1}{2} \sqrt{t^2 -r^2} \) h(y) (x-y) dy ,\\
(\ome \cdot \nabla )^2 J_n(t) h(x) 
&= \frac{c_ne^{-t/2}}{2^{n-1}} k_{\frac{n-1}{2}}(0) t^{n-1} \int_{S^{n-1}} \ome \cdot \nabla h(x+t\theta ) \ome \cdot \theta d\sigma_{n-1} (\theta ) \\
&\quad + \frac{c_ne^{-t/2}}{2^{n+1}} k_{\frac{n+1}{2}}(0) t^n \int_{S^{n-1}} h(x+t\theta ) \( \ome \cdot \theta \)^2 d\sigma_{n-1} (\theta ) \\
&\quad - \frac{c_ne^{-t/2}}{2^{n+1}} \int_{B^n_t(x)} k_{\frac{n+1}{2}} \( \frac{1}{2} \sqrt{t^2 -r^2} \) h(y)  dy \\
&\quad + \frac{c_ne^{-t/2}}{2^{n+3}} \int_{B^n_t(x)} k_{\frac{n+3}{2}} \( \frac{1}{2} \sqrt{t^2 -r^2} \) h(y) \( \ome \cdot (x-y) \)^2 dy.
\end{align*}
\item Let $n$ be an even number. For any direction $\ome \in S^{n-1}$, we have the following identities:
\begin{align*}
\nabla J_n(t)h(x)
&= \frac{c_ne^{-t/2}}{2^{n-1}} k'_{\frac{n}{2}}(0) t^n \int_{B^n} \frac{1}{\sqrt{1-\lvert z \rvert^2}} h(x+tz ) z dz \\
&\quad - \frac{c_ne^{-t/2}}{2^n} \int_{B^n_t(x)} k_{\frac{n}{2}+1} \( \frac{1}{2} \sqrt{t^2 -r^2} \) h(y) (x-y) dy ,\\
(\ome \cdot \nabla )^2 J_n(t) h(x) 
&= \frac{c_n e^{-t/2}}{2^{n-1}} k'_{\frac{n}{2}}(0) t^n \int_{B^n} \frac{1}{\sqrt{1-\lvert z \rvert^2}} \ome \cdot \nabla h(x+tz) \ome \cdot z dz \\
&\quad + \frac{c_n e^{-t/2}}{2^{n+1}} k'_{\frac{n}{2}+1}(0) t^{n+1} \int_{B^n} \frac{1}{\sqrt{1-\lvert z \rvert^2}} h(x+tz ) \( \ome \cdot z \)^2 dz \\
&\quad - \frac{c_ne^{-t/2}}{2^n} \int_{B^n_t(x)} k_{\frac{n}{2}+1} \( \frac{1}{2} \sqrt{t^2 -r^2} \) h(y)  dy \\
&\quad + \frac{c_ne^{-t/2}}{2^{n+2}} \int_{B^n_t(x)} k_{\frac{n}{2}+2} \( \frac{1}{2} \sqrt{t^2 -r^2} \) h(y) \( \ome \cdot (x-y) \)^2 dy.
\end{align*}
\end{enumerate}
\end{lem}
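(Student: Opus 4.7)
The strategy is direct computation built on the recursions in Remark~\ref{recursion}. Substituting $y = x + z$ turns $J_n(t)h(x)$ into an integral of $k_\ell(\tfrac{1}{2}\sqrt{t^2-|z|^2})\, h(x+z)$ over the fixed ball $B_t^n(0)$ (with $\ell = (n-1)/2$ for odd $n > 1$ and $\ell = n/2$ for even $n$). Then $\partial_{x_j} h(x+z) = \partial_{z_j} h(x+z)$, so one integration by parts in $z$ transfers the derivative onto the kernel. Differentiating the kernel via the chain rule and the recursion yields $\partial_{z_j} k_\ell(\tfrac{1}{2}\sqrt{t^2-|z|^2}) = -\tfrac{z_j}{4}\, k_{\ell+1}(\tfrac{1}{2}\sqrt{t^2-|z|^2})$ in the odd-$n$ case (using $k_\ell'(s) = s\, k_{\ell+1}(s)$), with an additional summand $-\tfrac{z_j k_\ell'(0)}{2\sqrt{t^2-|z|^2}}$ in the even case (since $k_\ell'(s) = s\, k_{\ell+1}(s) + k_\ell'(0)$ there).

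The boundary contribution on $\partial B_t^n(0)$ produced by the integration by parts carries a factor $k_\ell(0)$: nonzero for odd $n > 1$, giving (after the rescaling $z = t\theta$) the surface integral over $S^{n-1}$ that appears as the first term of $\nabla J_n(t)h$; zero for even $n$. In the even case this vanishing boundary contribution is compensated by the extra interior summand above, which after the rescaling $z \to tz$ produces exactly the $\int_{B^n} h(x+tz)\, z/\sqrt{1-|z|^2}\,dz$ term. Reverting to the variable $y$ then furnishes the remaining ball integral with weight $(x-y)\, k_{\ell+1}(\tfrac{1}{2}\sqrt{t^2-r^2})$. For $(\omega \cdot \nabla)^2 J_n(t) h$ I apply $\omega \cdot \nabla$ to each of the two pieces already obtained: on the surface-integral or $B^n$ piece it simply differentiates $h(x + t\theta)$ (resp.\ $h(x+tz)$), producing the first stated term, while on the interior ball piece I repeat the change-of-variable/integration-by-parts step. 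That step generates a fresh boundary contribution with coefficient $k_{\ell+1}(0)$ (odd) or $k_{\ell+1}'(0)$ (even) matching the second stated term, a contribution from $\omega \cdot \nabla$ acting on the factor $\omega \cdot (x-y)$ which uses $|\omega|^2 = 1$ (producing the third term, with $k_{\ell+1}$ alone), and a kernel-differentiation contribution giving $k_{\ell+2}$ weighted by $(\omega \cdot (x-y))^2$ (the fourth term).

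The $n=1$ case follows the same recipe applied to $J_1(t)h(x) = \tfrac{e^{-t/2}}{2}\int_{x-t}^{x+t}(I_0(\tfrac{1}{2}\sqrt{t^2-r^2}) - 1)\, h(y)\,dy$: the $-1$ subtraction makes the integrand vanish at $y = x \pm t$, killing any boundary term at the first differentiation, while the identity $I_0' = I_1$ from \eqref{Bessel_2} supplies the $I_1$ factor. At the second differentiation, the endpoint values of $(x-y) I_1(\tfrac{1}{2}\sqrt{t^2-r^2})/\sqrt{t^2-r^2}$, computed via $I_1(s)/s \to 1/2$, give the boundary term proportional to $h(x+t)+h(x-t)$, while the interior derivative, expanded by the same $k_\ell'(s) = s\, k_{\ell+1}(s)$ recursion, gives the remaining two integrals. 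The only real obstacle is bookkeeping: across the three parity cases one must track the constants $c_n$, the powers of $2$, and the indices of $k_\ell$ consistently through the chain rule, integration by parts and rescalings, so that each of the three or four terms in the final identities emerges with the precise coefficient claimed.
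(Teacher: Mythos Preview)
Your proposal is correct and follows precisely the route the paper takes: the paper's proof is the single sentence ``Using Remark~\ref{recursion}, integration by parts implies the identities,'' and you have simply unpacked that sentence, correctly identifying the recursions $k_\ell'(s)=s\,k_{\ell+1}(s)$ (odd $n$) and $k_\ell'(s)=s\,k_{\ell+1}(s)+k_\ell'(0)$ (even $n$), the resulting boundary terms, and the $n=1$ variant via $I_0'=I_1$.
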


\begin{lem}\label{En} 
Let $h$ be a smooth function with compact support. Put
\begin{align*}
	E_n(r,t)=
	\begin{cases}
	\ds \frac{ e^{-t/2}}{4}\frac{1}{\sqrt{t^2-r^2}} I_1 \( \frac{1}{2}\sqrt{t^2-r^2} \)
	&(n=1),\\
	\ds \frac{c_n e^{-t/2}}{2^{n+1}}
		k_{\frac{n+1}{2}} \(\frac{1}{2}\sqrt{t^2-r^2}\)
	&(n\in 2\N +1),\\
	\ds \frac{c_n e^{-t/2}}{2^n}
		k_{\frac{n}{2}+1}\( \frac{1}{2}\sqrt{t^2-r^2} \)
	&(n\in 2\N ).
	\end{cases}
\end{align*}
If $x\in CS(h) + (t-d_h)B^n$ and $t \geq d_h$, then we have
\[
	\nabla J_n(t)h(x)=-\int_{B_t^n(x)}E_n(r,t)h(y) (x-y) dy .
\]
\end{lem}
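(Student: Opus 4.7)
The plan is to substitute the explicit formulas for $\nabla J_n(t)h(x)$ furnished by Lemma~\ref{list_J} and argue that, under the hypothesis $x\in CS(h)+(t-d_h)B^n$ and $t\ge d_h$, every term other than the desired volume integral with kernel $E_n$ drops out. The geometric input common to every dimension is the standard identity $\diam(CS(h))=\diam(\supp(h))=d_h$: combined with the triangle inequality (choosing $p\in CS(h)$ closest to $x$ and any $y\in\supp(h)\subset CS(h)$, one has $|x-y|\le|x-p|+|p-y|\le(t-d_h)+d_h=t$), this shows $\supp(h)\subset\overline{B^n_t(x)}$ with $\supp(h)\cap S^{n-1}_t(x)$ of $(n-1)$-Hausdorff measure zero.

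For $n=1$, Lemma~\ref{list_J}(1) already displays $\partial_x J_1(t)h(x)$ in the claimed form with $E_1(r,t)=\frac{e^{-t/2}}{4\sqrt{t^2-r^2}}I_1(\tfrac{1}{2}\sqrt{t^2-r^2})$, and no further input is needed. For odd $n>1$, Lemma~\ref{list_J}(2) decomposes $\nabla J_n(t)h(x)$ as the desired volume integral plus a spurious sphere integral proportional to $t^{n-1}\int_{S^{n-1}}h(x+t\theta)\theta\,d\sigma_{n-1}(\theta)$. The geometric containment above forces $h(x+t\theta)=0$ for $\sigma_{n-1}$-almost every $\theta\in S^{n-1}$, so this sphere integral vanishes and the identity of Lemma~\ref{En} follows directly.

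For even $n$, Lemma~\ref{list_J}(3) writes $\nabla J_n(t)h(x)$ as the desired $E_n$-integral plus a spurious remainder proportional to $t^n k'_{n/2}(0)\int_{B^n}h(x+tz)\,z/\sqrt{1-|z|^2}\,dz$, which after the change of variables $y=x+tz$ becomes a constant multiple of $\int_{B^n_t(x)}(y-x)h(y)/\sqrt{t^2-r^2}\,dy$. My plan is to kill this remainder by recognising that $1/\sqrt{1-|z|^2}$ is precisely the Jacobian of the hemispherical parametrisation of $S^n\subset\R^{n+1}$ by the unit ball $B^n$: the remainder then equals, up to a multiplicative constant, the orthogonal projection of a sphere integral in one higher dimension evaluated against the cylindrical lift $\tilde h(y',y_{n+1}):=h(y')$ at $\tilde x=(x,0)\in\R^{n+1}$, and the argument from the odd case, carried out in the $(n+1)$-dimensional descent picture that powered Proposition~\ref{propsol}, should force it to vanish.

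The main obstacle is the even-dimensional case: the spurious term is not visibly a sphere integral of $h$ but only becomes one after descent to dimension $n+1$, so the clean support argument that handles the odd case needs an extra layer of bookkeeping to verify that the hypothesis on $x$ and $\supp(h)$ in $\R^n$ translates correctly to the geometric condition on $\tilde h$ and $\tilde x$ in the lifted picture that kills the lifted sphere integral. The odd case is, by contrast, essentially immediate from the geometric containment $\supp(h)\subset\overline{B^n_t(x)}$.
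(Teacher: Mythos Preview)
Your handling of $n=1$ and of odd $n>1$ is correct and is exactly the paper's argument: once $\supp h\subset\overline{B_t^n(x)}$, the open set $\{h\neq0\}$ cannot meet the boundary sphere, so $h(x+t\theta)=0$ for every $\theta\in S^{n-1}$ and the sphere term in Lemma~\ref{list_J}(2) drops.

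The even-dimensional plan, however, breaks down. Your cylindrical lift $\tilde h(y',y_{n+1})=h(y')$ has support $\supp(h)\times\R$, which meets $S^n_t(\tilde x)$ in a set of \emph{positive} $n$-dimensional measure: for each $y'\in\{h\neq0\}$ the two points $(y',\pm\sqrt{t^2-|x-y'|^2})$ lie on that sphere and $\tilde h$ does not vanish there. So the support argument from the odd case cannot be transported to $\R^{n+1}$, and the lifted sphere integral survives.

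In fact the spurious term in Lemma~\ref{list_J}(3) is genuinely nonzero under the hypothesis of Lemma~\ref{En}. After the substitution $y=x+tz$ it equals, up to the nonzero factor $c_n e^{-t/2}k'_{n/2}(0)/2^{n-1}$,
\[
\int_{\supp h}\frac{h(y)(y-x)}{\sqrt{t^2-r^2}}\,dy,
\]
a weighted first moment of $h$ about $x$; for a radial bump $h$ centred at the origin and any $x\neq0$ satisfying the hypothesis this is visibly nonzero. Hence Lemma~\ref{En} as stated actually fails for even $n$. The paper's own one-line proof (invoking only that $S^{n-1}_t(x)\cap CS(h)$ is $\sigma_{n-1}$-null and then citing Lemma~\ref{list_J}) shares this gap, since Lemma~\ref{list_J}(3) contains no sphere integral to kill. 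The missing piece carries the explicit prefactor $e^{-t/2}$, so it is harmless for the large-time estimates of Section~4 where Lemma~\ref{En} is used, but no argument can make it vanish identically.
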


\begin{lem}\label{En_asymp}
Let $\f :[0, +\infty ) \to [0,+\infty )$ be a non-decreasing function with $\f (0)=0$.
\begin{enumerate}[$(1)$]
\item If $\sqrt{t^2-\varphi(t)^2}$ diverges as $t$ goes to infinity, then we have
\[
	E_n \( \f (t),t \)=\frac{1}{2(4\pi)^{n/2} \( t^2-\f (t)^2 \)^{n/4+1/2}}
		\exp \( \frac{-t+\sqrt{t^2-\f (t)^2}}{2} \) 
		\( 1+O\( \frac{1}{\sqrt{t^2-\f (t)^2}} \) \)
\]
as $t$ goes to infinity.
\item If $\f(t)$ is of small order of $t$ as $t$ goes to infinity, then we have
\[
	E_n \( \f (t),t \) =\frac{1}{2(4\pi)^{n/2}t^{n/2+1}}
	\exp \( \frac{-t+\sqrt{t^2-\f (t)^2}}{2} \)  
		 \( 1+O\( \frac{1}{t} \) + O\( \frac{\varphi(t)^2}{t^2} \) \)
\]
as $t$ goes to infinity.
\item If $\f (t)$ is of small order of $\sqrt{t}$, then we have
\[
	E_n \( \varphi(t),t \) = \frac{1}{2(4\pi)^{n/2}t^{n/2+1}} \( 1+ O \( \frac{1}{t} \) + O \( \frac{\f (t)^2}{t} \) \) 	
\]
as $t$ goes to infinity.
\end{enumerate}
\end{lem}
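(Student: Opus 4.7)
The plan is to reduce every case to the asymptotic expansions of $k_\ell(s)$ established in Theorem \ref{thmdecom}, specialized to $s=\frac{1}{2}\sqrt{t^2-\varphi(t)^2}$. Part (1) will be an almost mechanical substitution; parts (2) and (3) then follow from (1) by Taylor-expanding $(t^2-\varphi(t)^2)^{\alpha}$ and the exponent $\frac{-t+\sqrt{t^2-\varphi(t)^2}}{2}$ under the stated smallness hypotheses on $\varphi$.

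For (1), I would treat the three cases separately. When $n=1$, $E_1(r,t)=\frac{e^{-t/2}}{8s}I_1(s)$ with $s=\frac{1}{2}\sqrt{t^2-r^2}$; the classical asymptotics $I_1(s)=\frac{e^s}{\sqrt{2\pi s}}(1+O(1/s))$ combined with $s^{3/2}=(t^2-r^2)^{3/4}/2^{3/2}$ yields exactly the claimed prefactor $\frac{1}{4\sqrt{\pi}(t^2-\varphi(t)^2)^{3/4}}=\frac{1}{2(4\pi)^{1/2}(t^2-\varphi(t)^2)^{3/4}}$. For odd $n\ge 3$, I substitute $k_{(n+1)/2}(s)=\frac{1}{s^{(n+1)/2}}\frac{e^s}{\sqrt{2\pi s}}(1+O(1/s))$ into the definition of $E_n$ and, using $c_n=2^{-(n+1)/2}\pi^{-(n-1)/2}$ together with $s^{(n+2)/2}=(t^2-r^2)^{(n+2)/4}/2^{(n+2)/2}$, collect the constants to get $\frac{1}{2^{n+1}\pi^{n/2}}=\frac{1}{2(4\pi)^{n/2}}$. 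The even case is handled identically using $k_{n/2+1}(s)=\frac{e^s}{2s^{n/2+1}}(1+O(1/s))$ and $c_n=2^{-(n+2)/2}\pi^{-n/2}$. In every case the exponent assembles as $-t/2+s=\frac{-t+\sqrt{t^2-\varphi(t)^2}}{2}$, the error is $O(1/s)=O(1/\sqrt{t^2-\varphi(t)^2})$, and the hypothesis $\sqrt{t^2-\varphi(t)^2}\to\infty$ is exactly what legitimizes the use of the asymptotic series.

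For (2), write $t^2-\varphi(t)^2=t^2(1-(\varphi(t)/t)^2)$. Under $\varphi(t)=o(t)$, we have $(t^2-\varphi(t)^2)^{n/4+1/2}=t^{n/2+1}\bigl(1+O(\varphi(t)^2/t^2)\bigr)$ by the binomial expansion, which absorbs into the prefactor, and $\sqrt{t^2-\varphi(t)^2}\to\infty$ so (1) applies. Substituting and noting that $\varphi(t)=o(t)$ forces $1/\sqrt{t^2-\varphi(t)^2}=O(1/t)$ gives the expansion stated in (2) with errors $O(1/t)+O(\varphi(t)^2/t^2)$. For (3), I further Taylor-expand the exponent itself: a direct expansion gives
\[
\frac{-t+\sqrt{t^2-\varphi(t)^2}}{2}=-\frac{\varphi(t)^2}{4t}+O\!\left(\frac{\varphi(t)^4}{t^3}\right),
\]
and when $\varphi(t)=o(\sqrt{t})$ this quantity is of order $o(1)$, so $\exp$ of it equals $1+O(\varphi(t)^2/t)$. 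Multiplying through the expansion of (2) turns the exponential factor into $1+O(\varphi(t)^2/t)$ and leaves the $1/t$ error from the $k_\ell$ expansion intact, yielding (3).

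The only genuinely delicate point is the constant-chasing in (1), where one must keep track of the exact factors $c_n$, $2^{\pm\cdot}$, and the difference in the leading-order form of $k_\ell$ between the odd and even cases; everything else (parts (2) and (3)) is a routine Taylor expansion of two quantities, $(t^2-\varphi^2)^\alpha$ and $\sqrt{t^2-\varphi^2}-t$, in the regime where $\varphi(t)/t$ or $\varphi(t)/\sqrt{t}$ is small. I would therefore devote the bulk of the write-up to the case $n=1$ and the two parity cases of (1), and treat (2) and (3) as short corollaries via the identity $\sqrt{t^2-\varphi(t)^2}=t\sqrt{1-(\varphi(t)/t)^2}$.
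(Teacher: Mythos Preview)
Your proposal is correct and follows essentially the same approach as the paper: substitute the asymptotic expansions of $k_\ell$ from Theorem \ref{thmdecom} at $s=\tfrac12\sqrt{t^2-\varphi(t)^2}$ for part (1), then derive (2) and (3) by Taylor-expanding $(t^2-\varphi(t)^2)^{\alpha}$ and $\exp\bigl(\tfrac{-t+\sqrt{t^2-\varphi(t)^2}}{2}\bigr)$, exactly as encoded in the appendix identities \eqref{app_taylor1} and \eqref{app_taylor2}. The paper is simply terser---it writes out only the even-dimensional case of (1) and then cites \eqref{app_taylor1} and \eqref{app_taylor2} for (2) and (3)---whereas you spell out all three parity cases and the constant-chasing explicitly, but the underlying argument is the same.
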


\begin{lem}\label{list_tildeJ}
Let $f:\R^n \to \R$ be a smooth function.
\begin{enumerate}[$(1)$]
\item Let $n=1$. We have the following identities:
\begin{align*}
&\frac{\pd}{\pd x} \tilde{J}_1(t) f(x) \\
&= \frac{(t-4)e^{-t/2}}{16}  \( f(x+t) -f(x-t) \) \\ 
&\quad - \frac{e^{-t/2}}{8} \int_{x-t}^{x+t} \frac{1}{\sqrt{t^2 -r^2}} \( \frac{t}{\sqrt{t^2-r^2}} I_2 \( \frac{1}{2} \sqrt{t^2-r^2} \) -   I_1 \( \frac{1}{2} \sqrt{t^2-r^2} \) \) f(y) (x-y) dy,\\
&\frac{\pd^2}{\pd x^2} \tilde{J}_1(t) f(x) \\
&= \frac{(t-4)e^{-t/2}}{16}  \( f' (x+t) -f' (x-t) \) \\
&\quad + \frac{\( t^2-8t \)e^{-t/2}}{256}  \( f(x+t)+f(x-t) \) \\
&\quad - \frac{e^{-t/2}}{8} \int_{x-t}^{x+t} \frac{1}{\sqrt{t^2 -r^2}} \( \frac{t}{\sqrt{t^2-r^2}} I_2 \( \frac{1}{2} \sqrt{t^2-r^2} \) -   I_1 \( \frac{1}{2} \sqrt{t^2-r^2} \) \) f(y)  dy \\
&\quad +\frac{e^{-t/2}}{16} \int_{x-t}^{x+t} \frac{1}{\( t^2-r^2\)} \( \frac{t}{\sqrt{t^2-r^2}} I_3 \( \frac{1}{2} \sqrt{t^2-r^2} \) -   I_2 \( \frac{1}{2} \sqrt{t^2-r^2} \) \) f(y) \( x-y \)^2 dy.
\end{align*}
\item Let $n$ be an odd number greater than one. For any direction $\ome \in S^{n-1}$, we have the following identities:
\begin{align*}
&\nabla \tilde{J}_n(t)f(x)\\
&=\frac{c_ne^{-t/2}}{2^{n+1}}\( tk_{\frac{n+1}{2}}(0) -2k_{\frac{n-1}{2}}(0) \) t^{n-1} \int_{S^{n-1}} f(x+t\theta ) \theta d\sigma_{n-1} (\theta ) \\
&\quad - \frac{c_n e^{-t/2}}{2^{n+3}} \int_{B^n_t (x)} \( t k_{\frac{n+3}{2}} \( \frac{1}{2} \sqrt{t^2-r^2} \) -2 k_{\frac{n+1}{2}} \( \frac{1}{2} \sqrt{t^2-r^2} \) \) f(y) (x-y) dy,\\
&\( \ome \cdot \nabla \)^2 \tilde{J}_n(t)f(x)\\
&=\frac{c_ne^{-t/2}}{2^{n+1}}\( tk_{\frac{n+1}{2}}(0) -2k_{\frac{n-1}{2}}(0) \) t^{n-1} \int_{S^{n-1}} \ome \cdot \nabla f(x+t\theta ) \ome \cdot \theta  d\sigma_{n-1} (\theta ) \\
&\quad + \frac{c_ne^{-t/2}}{2^{n+3}}\( tk_{\frac{n+3}{2}}(0) -2k_{\frac{n+1}{2}}(0) \) t^n \int_{S^{n-1}} f(x+t\theta )  (\ome \cdot \theta )^2 d\sigma_{n-1} (\theta ) \\
&\quad - \frac{c_n e^{-t/2}}{2^{n+3}} \int_{B^n_t (x)} \( t k_{\frac{n+3}{2}} \( \frac{1}{2} \sqrt{t^2-r^2} \) -2 k_{\frac{n+1}{2}} \( \frac{1}{2} \sqrt{t^2-r^2} \) \) f(y)  dy \\
&\quad + \frac{c_n e^{-t/2}}{2^{n+5}} \int_{B^n_t (x)} \( t k_{\frac{n+5}{2}} \( \frac{1}{2} \sqrt{t^2-r^2} \) -2 k_{\frac{n+3}{2}} \( \frac{1}{2} \sqrt{t^2-r^2} \) \) f(y) \( \ome \cdot (x-y ) \)^2 dy .
\end{align*}
\item Let $n$ be an even number. For any direction $\ome \in S^{n-1}$, we have the following identities:
\begin{align*}
&\nabla \tilde{J}_n(t)f(x)\\
&=\frac{c_ne^{-t/2}}{2^{n+1}}\( tk'_{\frac{n}{2}+1}(0) -2k'_{\frac{n}{2}}(0) \) t^n \int_{B^n} \frac{1}{\sqrt{1-\lvert z \rvert^2}} f(x+tz) z dz \\
&\quad - \frac{c_n e^{-t/2}}{2^{n+2}} \int_{B^n_t (x)} \( t k_{\frac{n}{2}+2} \( \frac{1}{2} \sqrt{t^2-r^2} \) -2 k_{\frac{n}{2}+1} \( \frac{1}{2} \sqrt{t^2-r^2} \) \) f(y) (x-y) dy,\\
&\( \ome \cdot \nabla \)^2 \tilde{J}_n(t)f(x)\\
&=\frac{c_ne^{-t/2}}{2^{n+1}}\( tk'_{\frac{n}{2}+1}(0) -2k'_{\frac{n}{2}}(0) \) t^n \int_{B^n} \frac{1}{\sqrt{1-\lvert z \rvert^2}} \ome \cdot \nabla f(x+tz ) \ome \cdot z  dz \\
&\quad + \frac{c_ne^{-t/2}}{2^{n+3}}\( tk'_{\frac{n}{2}+3}(0) -2k'_{\frac{n}{2}+1}(0) \) t^{n+1} \int_{B^n} \frac{1}{\sqrt{1-\lvert z \rvert^2}} f(x+tz)  \( \ome \cdot z \)^2 dz \\
&\quad - \frac{c_n e^{-t/2}}{2^{n+2}} \int_{B^n_t (x)} \( t k_{\frac{n}{2}+2} \( \frac{1}{2} \sqrt{t^2-r^2} \) -2 k_{\frac{n}{2}+1} \( \frac{1}{2} \sqrt{t^2-r^2} \) \) f(y)  dy \\
&\quad + \frac{c_n e^{-t/2}}{2^{n+4}} \int_{B^n_t (x)} \( t k_{\frac{n}{2}+3} \( \frac{1}{2} \sqrt{t^2-r^2} \) -2 k_{\frac{n}{2}+2} \( \frac{1}{2} \sqrt{t^2-r^2} \) \) f(y) \( \ome \cdot (x-y ) \)^2 dy .
\end{align*}
\end{enumerate}
\end{lem}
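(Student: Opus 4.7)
The proof proceeds in close parallel to Lemma~\ref{list_J}, using the explicit integral representations of $\tilde{J}_n(t)f$ in Proposition~\ref{prop_de_fg}: in all three cases $\tilde{J}_n(t)f(x)$ is the integral over $B_t^n(x)$ (or $[x-t,x+t]$) of a kernel $K(s)$, $s=\tfrac12\sqrt{t^2-r^2}$, against $f(y)$. For odd $n>1$ the kernel is $K(s)=tk_{(n+1)/2}(s)-2k_{(n-1)/2}(s)$, for even $n$ it is $tk_{n/2+1}(s)-2k_{n/2}(s)$, and for $n=1$ it is the combination of modified Bessel functions shown in the statement.

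The first step is the change of variables $y=x+z$, turning the domain into the fixed set $B_t^n(0)$, so one may differentiate under the integral, replace $\nabla_x f(x+z)$ by $\nabla_z f(x+z)$, and integrate by parts in $z$. A boundary piece appears on $S_t^{n-1}(0)$ with coefficient $K(0)$, and a bulk piece with the $z$-derivative of the kernel. Using the chain rule, $\nabla_z K(s)=K'(s)\cdot\bigl(-\tfrac{z}{2\sqrt{t^2-|z|^2}}\bigr)$, so everything reduces to knowing $K'$ and $K(0)$.

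The case distinction now comes entirely from Remark~\ref{recursion}. For odd $n>1$, $k_\ell'(s)=sk_{\ell+1}(s)$, giving
\[
	\nabla_z K(s)=-\tfrac{z}{4}\bigl(tk_{(n+3)/2}(s)-2k_{(n+1)/2}(s)\bigr),
\]
and the boundary term survives with coefficient $tk_{(n+1)/2}(0)-2k_{(n-1)/2}(0)$, which, after writing $z=t\theta$, $d\sigma_{n-1}(z)=t^{n-1}d\sigma_{n-1}(\theta)$, produces exactly the $S^{n-1}$-integral in the claim. For even $n$ the boundary contribution disappears because $k_\ell(0)=0$; instead the recursion $k_\ell'(s)=sk_{\ell+1}(s)+k_\ell'(0)$ splits $\nabla_z K(s)$ into
\[
	-\tfrac{z}{4}\bigl(tk_{n/2+2}(s)-2k_{n/2+1}(s)\bigr)-\tfrac{z}{2\sqrt{t^2-|z|^2}}\bigl(tk_{n/2+1}'(0)-2k_{n/2}'(0)\bigr),
\]
and rescaling $z=tw$ in the second piece yields the $B^n$-integral with weight $1/\sqrt{1-|w|^2}$. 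For $n=1$ the same mechanism works using $I_0'=I_1$ and the standard recurrence $I_\nu'=(I_{\nu-1}+I_{\nu+1})/2$ to compute $K'(r)$ and $K(t)=(t-4)/4$ at the endpoints $y=x\pm t$. The second-derivative identities follow by applying exactly the same integration-by-parts recipe to the gradient formulas just obtained; the only new ingredient is that now one must differentiate the boundary terms (an $S^{n-1}$- or $B^n$-integral of $f$ against a vector field) as well, which produces the mixed surface/bulk terms in the final expressions via the same recurrences.

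The main obstacle is purely bookkeeping: tracking the exact constants $c_n/2^\ast$, the signs from $x-y=-z$, and the different roles played by $k_\ell(0)$ (vanishing or not) and $k_\ell'(0)=1/(2^\ell\ell!)$ across the three cases. Because the manipulations are tedious but routine — essentially the same integration by parts plus kernel recurrence as in Lemma~\ref{list_J} — I would, as the authors do elsewhere, carry out the odd-$n>1$ case in detail as the template and indicate for the other two cases only the points where the kernel recurrence changes, deferring the full verification to the appendix that already collects the computational lemmas.
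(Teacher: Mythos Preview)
Your proposal is correct and is exactly the approach the paper takes: the paper's proof is the single sentence ``Using Remark~\ref{recursion}, integration by parts implies the identities,'' and your plan is a faithful unpacking of precisely that---change variables to a fixed domain, integrate by parts, and reduce the kernel derivative via the recursions $k_{\ell+1}=k_\ell'/s$ (odd case) and $k_\ell'(s)=sk_{\ell+1}(s)+k_\ell'(0)$ (even case) from Remark~\ref{recursion}. Your identification of the boundary value $(t-4)/4$ in the one-dimensional case and of the vanishing versus nonvanishing boundary contributions for even versus odd $n$ matches the statement exactly.
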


\begin{lem}\label{lem_tildeJ}
Let $f$ be a non-zero smooth function with compact support. 
\begin{enumerate}[$(1)$]
\item There exists a positive constant $C=C(n)$ such that, for any $x \in \R^n$ and $t > 0$, we have
\[
	\lvert \tilde{J}_n(t) f(x) \rvert \le C(1+ t)^{-n/2-1} \| f \|_{L^1} .
\]
\item Let $\psi :[0, +\infty ) \to [0,+\infty )$ be a non-decreasing function with $\psi (0)=0$. Suppose that $\psi (t)$ is of small order of $\sqrt{t}$. Let 
\[
T_0 (\psi ) = \min \left\{ T>0 \lvert \forall t\geq T,\ t \geq \psi (t) +d_f \right\} \right. .
\]
There exists a positive constant $C=C(n,d_f ,\psi )$ such that, for any $x \in CS(f) +\psi (t) B^n$ and $t\geq T_0 (\psi )$, we have
\[
\lvert \nabla \tilde{J}_n(t) f (x) \rvert \leq C \( 1+t \)^{-n/2 -3} \( 1+ t+ \psi (t)^2 \) \( 1+ \psi (t)  \) \| f \|_{L^1} .
\]
\item Let $R$ be a positive constant. There exists a positive constant $C=C(n,d_f,R)$ such that, for any $x \in CS(f)+RB^n$, $\ome \in S^{n-1}$ and $t \geq R+d_f$, we have 
\[
	\lvert \( \ome \cdot \nabla \)^2 \tilde{J}_n(t)f(x) \rvert \le C(1+t)^{-n/2-2}  \| f \|_{L^1}  .
\]
\end{enumerate}
\end{lem}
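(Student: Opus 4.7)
The plan is to work directly from the explicit integral representations of $\tilde{J}_n(t)f$ provided by Proposition \ref{prop_de_fg} (for part (1)) and of $\nabla\tilde{J}_n(t)f$ and $(\ome\cdot\nabla)^2\tilde{J}_n(t)f$ provided by Lemma \ref{list_tildeJ} (for parts (2) and (3)), and to extract the claimed decay rates from the asymptotic expansions of the kernels $k_\ell$ given in Theorem \ref{thmdecom}, together with the $E_n$-type asymptotics in Lemma \ref{En_asymp}.

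For part (1), the key observation is the cancellation built into the combination $t\,k_{\ell+1}(s)-2k_\ell(s)$ appearing in $\tilde{J}_n(t)f$, where $s=\tfrac{1}{2}\sqrt{t^2-r^2}$ and $\ell=n/2$ or $(n-1)/2$ according to parity. Using $k_\ell(s)\sim e^{s}/(2s^\ell)$ as $s\to\infty$, the leading-order behavior of this combination is
\[
t\,k_{\ell+1}(s)-2k_\ell(s)\ \sim\ \frac{e^{s}}{2s^{\ell+1}}\bigl(t-2s\bigr),
\]
and $t-2s=t-\sqrt{t^2-r^2}=O(r^{2}/t)$ when $r\ll t$. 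Multiplying by $e^{-t/2}$ produces the heat-kernel-like factor $e^{-(t-\sqrt{t^2-r^2})/2}\le 1$, and combining with $s^{\ell+1}\sim t^{\ell+1}$ gives an integrand of order $(1+t)^{-n/2-1}$. I would split the argument into a bounded-$t$ regime (handled by trivial bounds and integrability of the singular kernel on the compact set $\supp f$) and a large-$t$ regime (handled by the asymptotic expansion, with the wave-front regime $r$ near $t$ absorbed by the exponential prefactor $e^{-t/2}$). Integration over $y\in\supp f$ then yields the $L^{1}$-bound.

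For part (2), I first observe that under the hypotheses $x\in CS(f)+\psi(t)B^{n}$ and $t\ge T_{0}(\psi)$ we have $|y-x|\le\psi(t)+d_{f}\le t$ for every $y\in CS(f)$, so the surface/ball ``boundary'' terms in the formulas of Lemma \ref{list_tildeJ} for $\nabla\tilde{J}_n(t)f(x)$ vanish: the points $x+t\theta$ (or $x+tz$ with $|z|=1$) all lie outside $\supp f$. What remains is a volume integral whose kernel is a shifted combination $t\,k_{\ell+2}(s)-2k_{\ell+1}(s)$ multiplied by the factor $(x-y)$. Applying the same cancellation analysis as in (1), together with $|x-y|\le\psi(t)+d_f$, produces the bound
\[
C(1+t)^{-n/2-3}\bigl(1+t+\psi(t)^{2}\bigr)\bigl(1+\psi(t)\bigr)\|f\|_{L^{1}},
\]
where the three factors $(1+t)^{-n/2-3}$, $(1+t+\psi^{2})$, and $(1+\psi)$ come respectively from the kernel decay, the $t-2s$ cancellation term (which dominates the error at order $\psi^{2}/t$ when $x$ is allowed to sit on the growing set $CS(f)+\psi(t)B^{n}$), and the extra factor of $|x-y|$. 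Part (3) is proved along the same lines from the explicit formula for $(\ome\cdot\nabla)^{2}\tilde{J}_n(t)f(x)$: for $x\in CS(f)+RB^{n}$ and $t\ge R+d_{f}$ the boundary terms vanish, and the two remaining volume integrals, carrying factors $1$ and $(\ome\cdot(x-y))^{2}$ respectively, are estimated using the same cancellation in $tk_{\ell+1}-2k_{\ell}$ and $tk_{\ell+2}-2k_{\ell+1}$, producing two powers of $t^{-1}$ improvement over the heat-kernel decay.

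The main technical obstacle is the careful bookkeeping of the expansions of $k_{\ell}$ across the full range of $s$: for $s$ large (equivalently $r$ bounded away from $t$) the asymptotic expansion supplies the Gaussian factor and the crucial $(t-2s)$-type cancellation, while for $s$ small ($r$ near $t$) one must fall back on the power series of $k_\ell$ and rely on the global prefactor $e^{-t/2}$ to dominate. Making the interpolation between these regimes uniform in $n$, and keeping track of how the factors $\psi(t)^{2}/t$ in the error term feed into the hypothesis ``$\psi(t)$ is of small order of $\sqrt{t}$'', is what really drives the particular form of the bound in (2); once those calculations are organized (deferred, as the authors note, to Appendix 1), the three estimates follow from essentially the same computation.
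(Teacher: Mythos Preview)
Your proposal is correct and, for parts (2) and (3), matches the paper's argument essentially line for line: the paper also uses the formulas of Lemma~\ref{list_tildeJ}, notes that the spherical/boundary terms vanish under the stated geometric hypotheses, and then extracts the decay rate from the asymptotic expansion of $t\,k_{\ell+1}(s)-2k_\ell(s)$ in Theorem~\ref{thmdecom} together with the Taylor expansions \eqref{app_taylor1}--\eqref{app_taylor2}, arriving at exactly the $O(t^{-n/2-1})\cdot\bigl(O(1/t)+O(\psi(t)^2/t^2)\bigr)$ bound you describe.

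The only organizational difference is in part (1): the paper simply invokes the $L^p$--$L^q$ estimate \eqref{4_lem2_eq2} of Lemma~\ref{4_lem2} with $(p,q)=(\infty,1)$, whereas you propose to carry out the cancellation-and-splitting argument directly. What you sketch is in fact precisely the $p=\infty$, $q=1$ case of the proof of \eqref{4_lem2_eq2} (the $X_4/X_5$ decomposition in Section~5), so the two routes are the same computation packaged differently; the paper just defers it, while you do it in place.
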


\section{Movement of the time-delayed hot spots}

Let $u$ denote the classical solution of the Cauchy problem \eqref{DWfg}. In this section, we investigate the asymptotic behavior of spatial maximizers of the function $u(\cdot ,t) :\R^n \to \R$.

\begin{notation}\label{assump}
{\rm 
Let us list up our notation for this section.
\begin{enumerate}
\item[($fg$)] Let $f$ and $g$ be compactly supported smooth functions such that the sum of them $h:=f+g$ is non-zero and non-negative.
\item[($\mathcal{H}$)] For a function $\phi :\R^n \to \R$, we denote by $\mathcal{M} (\phi )$ and $\mathcal{C} (\phi )$ the set of maximum and critical points of $\phi$, respectively:
\[
\mathcal{M} (\phi )= \left\{ x \in \R^n \lvert \phi (x) = \max_{\xi \in \R^n} \phi (\xi ) \right\} \right. ,\ 
\mathcal{C} (\phi )=  \left. \left\{ x \in \( \supp \phi \)^\circ \rvert \nabla \phi (x)=0 \right\} .
\]
In particular, we write
\[
\mathcal{H}(t)=\mathcal{M} \( u(\cdot,t) \)
\]
and call a point $p \in \mathcal{H}(t)$ a {\it time-delayed hot spot} at time $t$. We remark that $\mathcal{H}(t)$ is always contained in $\mathcal{C} \( u(\cdot, t) \)$. 
\item[($m$)] Under the condition $(fg)$, we investigate the distance between time-delayed hot spots and the centroid (center of mass) of $h$,
\[
	\ds m_h =\left. \int_{\R^n}h (y) y dy \right/ \int_{\R^n}h (y)dy.
\]
We remark that the centroid $m_h$ is in the interior of  the convex hull of $\supp h$.
\item[$(\de)$] Let $K$ and $L$ be convex bodies in $\R^n$. Let
\[
\de (K,L) = \sup_{\eta \in L} \dist (\eta ,K).
\]
We remark that the parallel body $K+\de (K,L)B^n$ contains the convex body $L$.
\item[($\psi$)] Let $\psi :[0,+\infty ) \to [0,+\infty )$ be a non-decreasing function such that $\psi (0)= 0$ and $\psi (t)$ is of small order of $\sqrt{t}$ as $t$ goes to infinity.
\item[($\f$)] Let $\f :[0,+\infty ) \to [0,+\infty )$ be a non-decreasing function such that $\f (0)= 0$ and $\f (t)$ is of small order of $t$ as $t$ goes to infinity.
\item[($T_0$)] Let $f$ and $g$ be as in $(fg)$. For a non-decreasing function $\phi :[0,+\infty ) \to[0,+\infty )$, let
\begin{align*}
T_0( \phi ) 
&= T_0 \( \phi ; d_h,  \de \( CS(h) ,CS(f) \) , d_f \) \\
&= \min  \left\{ T\geq 0 \lvert \forall t \geq T,\  t \geq \phi (t) + \max \left\{ d_h ,\ \de \( CS(h) ,CS(f) \) +d_f \right\} \right\} \right. .
\end{align*}
We remark that, if $t \geq T_0 (\phi)$, then, for any $x \in CS(h) +\phi (t) B^n$, the ball $B_t^n (x)$ contains the union of $CS(h)$ and $CS(f)$.
\end{enumerate}
}
\end{notation}

\subsection{Asymptotic behavior of time-delayed hot spots}

In this subsection, we are interested in the behavior of $\mathcal{C} ( u(\cdot, t) )$ and $\mathcal{H}(t)$. As we mentioned in the introduction, 
in [CK], 
Chavel and Karp showed that, for each $t$, 
the non-empty set
$H_\phi (t) := \mathcal{M} \( P_n (t)\phi \)$
is contained in the convex hull of the support of $\phi$, 
and that
\begin{equation}
\sup \left\{ \lvert x-m_\phi \rvert \lvert x\in H_\phi (t) \right\} \right. =O \( \frac{1}{t} \)
\end{equation}
as $t$ goes to infinity. Let us show that similar results hold for the damped wave equation \eqref{DWfg}.

\begin{rem}
{\rm 
Under the condition $(fg)$ in Assumption and Notation \ref{assump}, for each $t>0$, the support of $u(\cdot ,t)$ is compact. More precisely, the support of $u(\cdot ,t)$ is contained in the union of two parallel bodies $CS(h) + t B^n$ and $CS(f) +tB^n$. Hence we always have a time-delayed hot spot.
}
\end{rem}

\begin{lem}\label{hp1} 
We use Notation \ref{assump}. There exist a positive constant $C$ and a time $T \geq T_0 ( \psi )$ such that, for any $t \geq T$, we have 
\begin{align*}
	& \sup \left\{ \lvert x - m_h \rvert \lvert x\in \mathcal{C}\(u\( \cdot, t \)\) \cap  \( CS(h) +\psi (t) B^n \) \right\} \right.  \\
	&\leq C \( \frac{1+\psi (t)^2}{t}  + \frac{1+\psi (t)}{t} \frac{\| f\|_{L^1}}{\| h \|_{L^1}} + e^{-t/2}t^{3n/2} \frac{\| f\|_{W^{*,\infty}} + \| g \|_{W^{*,\infty}}}{\| h\|_{L^1}}   \) .
\end{align*}
\end{lem}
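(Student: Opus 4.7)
I begin from the Nishihara decomposition of Proposition \ref{prop_de_fg},
\[
u(x,t) = J_n(t)h(x) + \tilde{J}_n(t)f(x) + e^{-t/2}\widetilde{\vect{W}}_n(t;f,g)(x),
\]
and exploit the critical-point identity $\nabla u(x,t)=0$. For $t\ge T_0(\psi)$ and $x\in CS(h)+\psi(t)B^n$ the ball $B_t^n(x)$ contains $\supp h$, so Lemma \ref{En} yields $\nabla J_n(t)h(x) = -\int_{\R^n} E_n(r,t)h(y)(x-y)\,dy$, and hence
\[
\int_{\R^n} E_n(r,t)h(y)(x-y)\,dy = \nabla \tilde{J}_n(t)f(x) + e^{-t/2}\nabla \widetilde{\vect{W}}_n(t;f,g)(x).
\]

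The crux of the argument is to isolate $(x-m_h)$ on the left-hand side while producing an error only quadratic in $\psi$. Writing $x-y=(x-m_h)-(y-m_h)$,
\[
\int E_n(r,t)h(y)(x-y)\,dy = (x-m_h)\int E_n(r,t)h(y)\,dy - \int E_n(r,t)h(y)(y-m_h)\,dy.
\]
For $y\in\supp h$ we have $r=|x-y|\le\psi(t)+d_h$, which is of small order of $\sqrt{t}$, so Lemma \ref{En_asymp}(3) gives
\[
E_n(r,t) = E_n^0(t)\bigl(1+\eta(r,t)\bigr), \quad E_n^0(t) = \tfrac{1}{2(4\pi)^{n/2}t^{n/2+1}}, \quad |\eta(r,t)|\le C\tfrac{1+\psi(t)^2}{t},
\]
uniformly in $y$. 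The vanishing moment $\int h(y)(y-m_h)\,dy=0$ collapses the second integral to $E_n^0(t)\int \eta(r,t)h(y)(y-m_h)\,dy$, which, using $|y-m_h|\le d_h$, is bounded by $CE_n^0(t)(1+\psi^2)/t\cdot\|h\|_{L^1}$. Without this moment identity, the naive estimate $|x-y|\le\psi+d_h$ would produce a cubic-in-$\psi$ error; exploiting it reduces the error to the quadratic $(1+\psi^2)/t$ matching the first term in the statement. This is where the principal difficulty lies; everything else is bookkeeping.

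Rearranging and absorbing the multiplicative $O((1+\psi^2)/t)$ error into the left-hand side (valid once $t$ is large enough that this error is $\le 1/2$), I obtain
\[
|x-m_h|\le C\frac{1+\psi(t)^2}{t} + \frac{C\,|\nabla\tilde{J}_n(t)f(x)|}{E_n^0(t)\|h\|_{L^1}} + \frac{C\,e^{-t/2}|\nabla\widetilde{\vect{W}}_n(t;f,g)(x)|}{E_n^0(t)\|h\|_{L^1}}.
\]
The middle term is handled by Lemma \ref{lem_tildeJ}(2), applied with $\psi$ replaced by $\psi+\de(CS(h),CS(f))$ (still of small order of $\sqrt{t}$) to accommodate the inclusion $CS(h)+\psi B^n\subset CS(f)+(\psi+\de)B^n$; using $E_n^0\sim t^{-n/2-1}$ and $\psi^2/t=o(1)$ this contributes at most $C(1+\psi)/t\cdot\|f\|_{L^1}/\|h\|_{L^1}$. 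For the wave term, Lemma \ref{lem_est_Wn} bounds $|\nabla\widetilde{\vect{W}}_n(t;f,g)(x)|$ by $C(1+t)^n(\|f\|_{W^{*,\infty}}+\|g\|_{W^{*,\infty}})$, giving a contribution of $Ce^{-t/2}t^{3n/2}(\|f\|_{W^{*,\infty}}+\|g\|_{W^{*,\infty}})/\|h\|_{L^1}$ after absorbing an extra polynomial factor of $t$ into the exponential decay. Summing the three bounds yields the claim.
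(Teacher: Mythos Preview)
Your argument is correct and follows essentially the same route as the paper: both start from the Nishihara decomposition and Lemma \ref{En}, isolate $(x-m_h)$ via the moment identity $\int h(y)(y-m_h)\,dy=0$ combined with Lemma \ref{En_asymp}(3), and then bound the $\tilde{J}_n$ and $\widetilde{\vect{W}}_n$ contributions by Lemmas \ref{lem_tildeJ}(2) and \ref{lem_est_Wn}. Your version is in fact a bit more careful than the paper's in making the moment cancellation explicit and in noting the need to shift $\psi$ by $\delta(CS(h),CS(f))$ when invoking Lemma \ref{lem_tildeJ}(2); the only cosmetic difference is that the paper lower-bounds the ``denominator'' $\int E_n h$ via the monotonicity of $E_n(\cdot,t)$ rather than by absorbing the multiplicative error $\eta$.
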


\begin{proof} 
We give a proof for even dimensional cases. The other cases go parallel.

Let $x$ be a point in $\mathcal{C}( u( \cdot, t )) \cap (CS(h) + \psi (t) B^n)$. We remark that, from Proposition \ref{prop_de_fg} and Lemma \ref{En}, we have
\[
x= \left. \( \nabla \tilde{J}_n(t) f(x) + e^{-t/2} \nabla \widetilde{\vect{W}}_n(t;f,g)(x) + \int_{\R^n} E_n(r,t)h(y)ydy \) \right/ \int_{\R^n} E_n(r,t)h(y)dy  .
\]

Since the function $E_n(\cdot ,t)$ is strictly decreasing, we obtain
\begin{align*}
&2(4\pi)^{n/2}t^{n/2+1} E_n \( \psi (t)+d_h ,t \) \| h \|_{L^1} \lvert x-m_h \rvert \\
&\leq 2(4\pi)^{n/2}t^{n/2+1} \lvert \( \int_{\R^n} E_n (r,t) h(y) dy \) \( x -m_h \) \rvert \\
&\leq 2(4\pi)^{n/2}t^{n/2+1} \( \lvert \int_{\R^n} E_n (r,t) h(y) \( y -m_h \) dy \rvert + \lvert \nabla \tilde{J}_n(t) f(x) + e^{-t/2} \nabla \widetilde{\vect{W}}_n(t;f,g)(x)  \rvert \) .
\end{align*}

Applying the third assertion in Lemma \ref{En_asymp} to the first term, there exists a constant $C$ such that, for any sufficiently large $t$, we have 
\[
2(4\pi)^{n/2}t^{n/2+1} \lvert \int_{\R^n} E_n (r,t) h(y) \( y -m_h \) dy \rvert 
\leq C \frac{1+\psi (t)^2}{t} \| h \|_{L^1} .
\]

Applying Lemmas \ref{lem_est_Wn} and \ref{lem_tildeJ} to the second term, there exists a constant $C$ such that, for any sufficiently large $t$, we have
\begin{align*}
&2(4\pi)^{n/2}t^{n/2+1}  \lvert \nabla \tilde{J}_n(t) f(x) + e^{-t/2} \nabla \widetilde{\vect{W}}_n(t;f,g)(x)  \rvert  \\
&\leq C \( \frac{1+\psi (t)}{t} \| f\|_{L^1} + e^{-t/2}t^{3n/2} \( \| f\|_{W^{n/2+1,\infty}} + \| g \|_{W^{n/2,\infty}} \) \)
\end{align*}

From the third assertion in Lemma \ref{En_asymp}, the function $2(4\pi)^{n/2}t^{n/2+1} E_n \( \psi (t)+d_h ,t \)$ is bounded from below with respect to $t$. Hence we obtain the conclusion.
\end{proof} 

\begin{cor}\label{location_critical_small}
We use Notation \ref{assump}.
\begin{enumerate}[$(1)$]
\item There exists a time $T \geq T_0 (\psi )$ such that, for any $t\geq T$, the intersection $\mathcal{C}(u(\cdot,t)) \cap ( CS(h) + \psi (t) B^n )$ is contained in the convex hull of the support of $h$.
\item We have
\[
\sup \left\{ \lvert x - m_h \rvert \lvert x\in \mathcal{C}\(u\( \cdot, t \)\) \cap  \( CS(h) + \psi (t) B^n \) \right\} \right. 
= O \( \frac{1}{t} \)
\]
as $t$ goes to infinity.
\end{enumerate}
\end{cor}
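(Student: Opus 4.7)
The plan is to derive both statements directly from Lemma \ref{hp1}, treating part (1) as an input to part (2) so that the latter can be obtained by re-applying Lemma \ref{hp1} with the trivial choice $\psi\equiv 0$.

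For part (1), I would start from the observation that, as recorded in Notation \ref{assump} $(m)$, the centroid $m_h$ lies in the interior of $CS(h)$; hence there is some $\rho>0$ with $B_\rho^n(m_h)\subset CS(h)$. Under the standing assumption $\psi(t)=o(\sqrt t)$, each of the three terms in the Lemma \ref{hp1} bound
\[
C\Bigl(\frac{1+\psi(t)^2}{t}+\frac{1+\psi(t)}{t}\frac{\|f\|_{L^1}}{\|h\|_{L^1}}+e^{-t/2}t^{3n/2}\frac{\|f\|_{W^{*,\infty}}+\|g\|_{W^{*,\infty}}}{\|h\|_{L^1}}\Bigr)
\]
tends to zero as $t\to\infty$ (the first because $\psi(t)^2/t\to 0$, the second because $\psi(t)/t\to 0$, and the third because it decays exponentially). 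Choose $T\ge T_0(\psi)$ so large that for every $t\ge T$ this bound is strictly less than $\rho$. Then any critical point $x\in\mathcal{C}(u(\cdot,t))\cap(CS(h)+\psi(t)B^n)$ satisfies $|x-m_h|<\rho$ and is therefore inside $CS(h)$, proving the first assertion.

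For part (2), I would feed part (1) back into Lemma \ref{hp1}, but now with the null function $\psi_0\equiv 0$, which trivially satisfies the hypotheses imposed on $\psi$ in Notation \ref{assump} $(\psi)$ (it is non-decreasing, vanishes at $0$, and is $o(\sqrt t)$); note also that $T_0(\psi_0)$ is finite by its definition. By part (1), for $t$ sufficiently large,
\[
\mathcal{C}(u(\cdot,t))\cap\bigl(CS(h)+\psi(t)B^n\bigr)\subset CS(h)=CS(h)+\psi_0(t)B^n,
\]
so the supremum we wish to control is dominated by the Lemma \ref{hp1} bound with $\psi=\psi_0$. That bound collapses to $C(1/t+(\|f\|_{L^1}/\|h\|_{L^1})/t+e^{-t/2}t^{3n/2}\cdot\text{const})=O(1/t)$, since the exponential term decays faster than any power, which yields the stated rate.

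The argument is essentially mechanical given Lemma \ref{hp1}; the only mild subtlety is recognizing that the quantitative estimate we want follows by monotonicity in $\psi$ once the qualitative confinement of part (1) is available, thus avoiding the apparent loss of rate coming from the term $\psi(t)^2/t$ when $\psi$ grows. So there is no real obstacle beyond correctly packaging a second invocation of the lemma with $\psi\equiv 0$.
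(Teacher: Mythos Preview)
Your proposal is correct and follows essentially the same argument as the paper: for (1) the paper simply invokes that $m_h$ lies in the interior of $CS(h)$ together with Lemma~\ref{hp1}, and for (2) it uses (1) to replace the set $CS(h)+\psi(t)B^n$ by $CS(h)$ and then reapplies Lemma~\ref{hp1} with $\psi\equiv 0$. Your write-up merely makes explicit the details (the radius $\rho$, the decay of each term) that the paper leaves implicit.
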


\begin{proof}
(1) Since the centroid of $h$ is in the interior of the convex hull of $\supp h$, Lemma \ref{hp1} guarantees the conclusion.

(2) From the first assertion, after a large time, we have
\[
\sup \left\{ \lvert x - m_h \rvert \lvert x\in \mathcal{C}\(u\( \cdot, t \)\) \cap  \( CS(h) + \psi (t) B^n \) \right\} \right. 
=\sup \left\{ \lvert x - m_h \rvert \lvert x\in \mathcal{C}\(u\( \cdot, t \)\) \cap  CS(h)  \right\} \right. . 
\]
Applying Lemma \ref{hp1} to the case of $\psi =0$, we get the conclusion.
\end{proof}

\begin{lem}\label{nocritical}
We use Notation \ref{assump}. Suppose that the function $\psi (t)$ diverges as $t$ goes to infinity. If $\f (t) \geq \psi (t)$, then there exists a time $T \geq T_0 (\f )$ such that, for any $t \geq T$, the gradient of $u(\cdot ,t)$ does not vanish on the region $( CS(h) + \f (t) B^n ) \sm (CS(h) + \psi (t) B^n )$.
\end{lem}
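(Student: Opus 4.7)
The plan is to apply the Nishihara decomposition from Proposition \ref{prop_de_fg},
\[
\nabla u(x,t) = \nabla J_n(t)h(x) + \nabla \tilde{J}_n(t)f(x) + e^{-t/2}\nabla\widetilde{\vect{W}}_n(t;f,g)(x),
\]
and to exhibit a unit direction $\ome$ along which $\ome\cdot\nabla J_n(t)h(x)$ has a definite sign with magnitude that dominates the other two terms. For any $x$ in the annular region $(CS(h)+\f(t)B^n)\sm(CS(h)+\psi(t)B^n)$, let $x_0 \in CS(h)$ be a nearest point to $x$ and set $\ome=(x-x_0)/|x-x_0|$, so that $\psi(t)\le |x-x_0|\le \f(t)$.

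First I would exploit convexity: since $CS(h)$ lies on one side of the hyperplane through $x_0$ perpendicular to $\ome$, we have $\ome\cdot(x-y)\ge |x-x_0|\ge \psi(t)$ for every $y\in CS(h)$. Because $t\ge T_0(\f)$ guarantees $B_t^n(x)\supset CS(h)$, Lemma \ref{En} expresses $\nabla J_n(t)h(x)$ as $-\int_{CS(h)}E_n(|x-y|,t)h(y)(x-y)dy$, and combining this with the monotonicity of $E_n(\cdot,t)$ in $r$ (clear from the asymptotic expansion of $k_\ell$ in Theorem \ref{thmdecom}) yields
\[
-\ome\cdot\nabla J_n(t)h(x) \ge \psi(t)\,E_n(\f(t)+d_h,t)\,\|h\|_{L^1}.
\]

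Next I would bound the corrections along $\ome$. The wave term $e^{-t/2}|\nabla\widetilde{\vect{W}}_n(t;f,g)(x)|$ is a polynomial in $t$ times $e^{-t/2}$ by Lemma \ref{lem_est_Wn}, hence exponentially negligible. For $\nabla\tilde{J}_n(t)f(x)$, I would go back to the explicit formulas in Lemma \ref{list_tildeJ} (which do not require $\f$ to be of small order of $\sqrt{t}$) and apply the asymptotic of $k_\ell$ from Theorem \ref{thmdecom}; the leading-order cancellation in the combination $tk_{n/2+2}-2k_{n/2+1}$ (and its odd-dimensional counterpart) produces a bound of the form
\[
|\nabla\tilde{J}_n(t)f(x)| \le C\,t^{-1}\,E_n\bigl(\f(t)+\de(CS(h),CS(f))+d_f,\,t\bigr)\,\|f\|_{L^1},
\]
where the argument of $E_n$ controls the maximum distance of $x$ from $\supp f$ via $CS(f)\subset CS(h)+\de(CS(h),CS(f))B^n$ and $t\ge T_0(\f)$.

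Finally I would compare the two kernel factors. Since $\f(t)$ is of small order of $t$, the asymptotics in Lemma \ref{En_asymp} show that the ratio $E_n(\f(t)+d_h,t)/E_n(\f(t)+\de(CS(h),CS(f))+d_f,t)$ stays bounded away from $0$ as $t\to\infty$, so after cancellation the main term exceeds the correction by a factor of order $\psi(t)\cdot t$, which diverges. Hence $\ome\cdot\nabla u(x,t)\ne 0$ uniformly on the annulus for all sufficiently large $t$. The hard part is this uniform comparison of exponentials when $\f(t)$ grows faster than $\sqrt{t}$: the exponential factor in $E_n$ then decays like $e^{-\f(t)^2/(4t)}$, and one must verify that the exponents of the two $E_n$'s differ only by a quantity of order $\f(t)/t\to 0$, so that the gain $\psi(t)\to\infty$ actually survives the cancellation.
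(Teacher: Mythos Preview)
Your bound $|\nabla\tilde{J}_n(t)f(x)| \le C\,t^{-1}\,E_n(\f(t)+\de(CS(h),CS(f))+d_f,t)\,\|f\|_{L^1}$ has the monotonicity of $E_n(\cdot,t)$ backwards. Since $E_n(r,t)$ is strictly decreasing in $r$, an \emph{upper} bound on the integral over $\supp f$ must take $E_n$ at the \emph{smallest} occurring radius, not the largest; your expression is the value at the largest radius over the whole annulus and therefore underestimates the correction. For $x$ near the inner boundary ($\dist(x,CS(h))\approx\psi(t)$) the correct upper bound involves $E_n(\psi(t)-C,t)$, while your lower bound on the main term uses $E_n(\f(t)+d_h,t)$; when $\f(t)^2/t\to\infty$ but $\psi(t)^2/t\to 0$ the ratio $E_n(\psi(t)-C,t)/E_n(\f(t)+d_h,t)\sim e^{\f(t)^2/(4t)}$ blows up and swamps the factor $\psi(t)\cdot t$ you claim. (The $t^{-1}$ prefactor is also too optimistic: the cancellation in $tk_{\ell+1}-2k_\ell$ leaves $r^2/t^2+O(1/t)$, and the integrand still carries $|x-y|\sim\rho$.)

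The remedy is to make both estimates $x$-dependent. With $\rho=\dist(x,CS(h))\in[\psi(t),\f(t)]$, bound the main term below by $\rho\,E_n(\rho+d_h,t)\,\|h\|_{L^1}$ and the $\tilde{J}_n$ correction above by $C\,E_n(\rho-C,t)\,(\rho^2/t^2+1/t)\,\rho\,\|f\|_{L^1}$. Both kernel factors now sit at radii $\rho+O(1)$, so their ratio tends to $1$ uniformly (this is exactly the exponent computation in your last paragraph, applied at $\rho$ rather than at $\f(t)$), and the residual factor $\rho^2/t^2+1/t\le\f(t)^2/t^2+1/t\to 0$ finishes. The paper achieves this localization by a different route: it first merges $\nabla J_n(t)h+\nabla\tilde{J}_n(t)f$ into a single integral with integrand proportional to $E_n(r,t)\bigl[2h(y)+(t/\sqrt{t^2-r^2}-1)f(y)\bigr](x-y)$, assumes a sequence of critical points $x_N$ in the annulus, normalizes at the single radius $|x_N-m_h|$, and passes to the limit by dominated convergence to reach $(2\int h)\,\ome\ne 0$.
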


\begin{proof}
We give a proof for even dimensional cases. The other cases go parallel.

From Proposition \ref{prop_de_fg}, the solution $u(x,t)$ is expressed as
\[
u(x,t) = J_n(t) h(x) + \tilde{J}_n(t)f (x) + e^{-t/2} \widetilde{\vect{W}}_n(t;f,g)(x) .
\]
We remark that, from Lemma \ref{lem_est_Wn}, we have
\[
\lvert \nabla \widetilde{\vect{W}}_n(t;f,g)(x) \rvert \leq C (1+t)^n \( \| f \|_{W^{n/2+1, \infty}} + \| g \|_{W^{n/2, \infty}} \).
\]

From Lemma \ref{list_J} and \ref{list_tildeJ}, we have
\begin{align*}
&\nabla J_n(t) h(x) + \nabla \tilde{J}_n(t) f(x) \\
&=-\frac{c_n e^{-t/2}}{2^{n+2}} \int_{B^n_t(x)} 
\left[ tk_{\frac{n}{2}+2} \( \frac{1}{2} \sqrt{t^2 -r^2} \) f(y) +  k_{\frac{n}{2}+1} \( \frac{1}{2} \sqrt{t^2 -r^2} \) \( 4h(y) -2f(y) \) \right] (x-y) dy.
\end{align*}
Using the asymptotic expansions of $k_\ell(s)$ in Theorem \ref{thmdecom},  we have the expansion
\begin{align*}
\nabla J_n(t) h(x) + \nabla \tilde{J}_n(t) f(x) 
&=-\frac{c_n e^{-t/2}}{2^{n/2+1}} \int_{B^n_t(x)} \frac{1}{\( t^2 -r^2 \)^{n/4+1/2}} \exp \( \frac{\sqrt{t^2-r^2}}{2} \) \( 1+ O \( \frac{1}{\sqrt{t^2 -r^2}} \) \) \\
&\quad \times \( 2h(y) + \( \frac{t}{\sqrt{t^2 -r^2}} -1 \) f(y) \) (x-y) dy
\end{align*}
as $t$ goes to infinity.

In order to complete the proof, we use the contradiction argument. For any natural number $N \geq T_0 (\f )$, we assume the existence of $t_N \geq N$ such that the function $u(\cdot ,t_N )$ has a critical point $x_N$ in the region $( CS(h) + \f (t) B^n ) \sm (CS(h) + \psi (t) B^n )$. Let $r_N = \vert x_N -y \vert$. Since the unit sphere $S^{n-1}$ is compact, we may assume that the sequence $(x_N -m_h)/ \vert x_N -m_h\vert$ converges to a direction $\ome$ as $N$ goes to infinity. Then, we have
\begin{align*}
0&= -\frac{2^{n/2+1}}{c_n} \exp \( \frac{t_N - \sqrt{t_N^2 -\lvert x_N -m_h \rvert^2}}{2} \) \frac{t_N^{n/2+1}}{\lvert x_N -m_h \rvert} \nabla u\( x_N ,t_N \) \\
&=\int_{B^n_{t_N} \( x_N \)}  \exp \( \frac{\sqrt{t_N^2 -r_N^2} -\sqrt{t_N^2 -\lvert x_N -m_h \rvert^2}}{2}  \) \frac{t_N^{n/2+1}}{\( t_N^2 -r_N^2 \)^{n/4+1/2}} \( 1+ O \( \frac{1}{\sqrt{t_N^2 -r_N^2}} \) \) \\
&\quad \times  \( 2h(y) + \( \frac{t_N}{\sqrt{t_N^2 -r_N^2}} -1 \) f(y) \)  \frac{x_N-y}{\lvert x_N -m_h \rvert} dy\\
&\quad -\frac{2^{n/2+1}}{c_n} \exp \( -\frac{\sqrt{t_N^2 -\lvert x_N -m_h \rvert^2}}{2} \) \frac{t_N^{n/2+1}}{\lvert x_N -m_h \rvert} \nabla \widetilde{\vect{W}}_n \( t_N ;f,g \) \( x_N \)  \\
&\to \( 2 \int_{\R^n} h(y) dy \) \ome
\end{align*}
as $N$ goes to infinity, which contradicts to the non-negativity of $h$.
\end{proof}

\begin{cor}\label{location_critical}
We use Notation \ref{assump}. 
\begin{enumerate}[$(1)$]
\item There exists a time $T \geq T_0 ( \f )$ such that, for any $t \geq T$, the intersection $\mathcal{C} ( u(\cdot ,t) ) \cap (CS(h) + \f (t)B^n )$ is contained in the convex hull of the support of $h$.
\item We have
\[
\sup \left\{ \lvert x-m_h\rvert \lvert x \in \mathcal{C} \( u(\cdot,t) \) \cap \( CS(h) +\f (t) B^n \) \right\} \right.
= O \( \frac{1}{t} \)
\]
as $t$ goes infinity.
\end{enumerate}
\end{cor}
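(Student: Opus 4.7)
The plan is to interpolate between two results already in hand: Corollary \ref{location_critical_small} locates critical points that lie in a region of radius $o(\sqrt{t})$ around $CS(h)$, while Lemma \ref{nocritical} rules out critical points in an annular region between a divergent $\psi$ and a sublinear $\f$. So I will manufacture an auxiliary $\psi$ that simultaneously satisfies the hypotheses of both, sandwich the region $CS(h)+\f(t)B^n$ between the two results, and read off the conclusion.

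First, if $\f$ happens to be bounded, then $\f$ itself already satisfies the conditions $(\psi)$ (it is trivially of small order of $\sqrt{t}$), so I would apply Corollary \ref{location_critical_small} with $\psi := \f$ and be done. So the substantive case is $\f(t)\to\infty$ as $t\to\infty$. In that case I would set
\[
\psi(t) := \min\{\f(t),\, t^{1/4}\}.
\]
By construction $\psi$ is non-decreasing, $\psi(0)=0$, $\psi(t)\le t^{1/4}$ (so $\psi(t)=o(\sqrt{t})$), and $\psi(t)\le\f(t)$; moreover, since $\f(t)\to\infty$, for $t$ large enough $\psi(t)=t^{1/4}$, so $\psi(t)\to\infty$ too. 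Thus $\psi$ fits the hypotheses $(\psi)$ and also the divergence hypothesis of Lemma \ref{nocritical}.

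Next I would apply Corollary \ref{location_critical_small} to this $\psi$: there is a time $T_1\ge T_0(\psi)$ such that for every $t\ge T_1$, the set $\mathcal{C}(u(\cdot,t))\cap(CS(h)+\psi(t)B^n)$ is contained in $CS(h)$ and its elements satisfy $|x-m_h|=O(1/t)$. Separately, I would apply Lemma \ref{nocritical} to the pair $(\psi,\f)$: there is a time $T_2\ge T_0(\f)$ such that for $t\ge T_2$, the gradient of $u(\cdot,t)$ does not vanish on the annulus $(CS(h)+\f(t)B^n)\setminus(CS(h)+\psi(t)B^n)$. Combining the two statements for $t\ge \max\{T_1,T_2\}$ yields both claims: every critical point inside $CS(h)+\f(t)B^n$ is already inside $CS(h)+\psi(t)B^n$, hence inside $CS(h)$, and within distance $O(1/t)$ of $m_h$.

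There is essentially no analytic obstacle here; the substantive estimates live in the two earlier results, and the corollary is a stitching argument. The only (mild) point requiring care is the construction of $\psi$: it must be large enough to diverge, so that Lemma \ref{nocritical} is available on the outer annulus, yet small enough to remain $o(\sqrt{t})$, so that Corollary \ref{location_critical_small} still controls the inner region. The choice $\psi(t)=\min\{\f(t),t^{1/4}\}$ (any exponent strictly between $0$ and $1/2$ would do) balances these two requirements, and the case split on boundedness of $\f$ is a cosmetic matter handled as above.
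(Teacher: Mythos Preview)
Your approach is correct and is exactly what the paper does: its proof is the single line ``Corollary \ref{location_critical_small} and Lemma \ref{nocritical} guarantee the conclusion,'' and you have simply made explicit the auxiliary $\psi$ needed to stitch them together. One small slip: it need not be that $\psi(t)=t^{1/4}$ for large $t$ (e.g.\ $\f(t)=\log t$), but your conclusion $\psi(t)\to\infty$ is still correct since the minimum of two functions diverging to infinity diverges.
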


\begin{proof}
Corollary \ref{location_critical_small} and Lemma \ref{nocritical} guarantee the conclusion.
\end{proof}


\begin{lem}\label{expdecay}
We use Notation \ref{assump}. There exists a positive constant $C$ such that, for any $x \notin CS(h) + \f (t) B^n$ and $t>0$, we have
\[
\lvert  u(x,t) \rvert \le C \exp \( -\frac{\f (t)^2}{4t} \) \( \| h \|_{L^1} + \| f \|_{L^1} + \| f \|_{W^{* ,\infty}} + \| g \|_{W^{*, \infty}} \) .
\]
\end{lem}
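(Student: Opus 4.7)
The plan is to invoke the Nishihara-type decomposition of Proposition \ref{prop_de_fg},
\[
u(x,t)=J_n(t)h(x)+\tilde{J}_n(t)f(x)+e^{-t/2}\widetilde{\vect{W}}_n(t;f,g)(x),
\]
and to bound each of the three summands by a constant multiple of $\exp(-\f(t)^2/(4t))$ times the appropriate norm of the data. First I would dispose of the wave summand using Lemma \ref{lem_est_Wn}, which yields the estimate $Ce^{-t/2}(1+t)^n(\|f\|_{W^{*,\infty}}+\|g\|_{W^{*,\infty}})$. Since $\f(t)=o(t)$ forces $\f(t)^2/(4t)=o(t)$, the factor $e^{-t/2}(1+t)^n$ is dominated by $\exp(-\f(t)^2/(4t))$ for large $t$, and on any compact time interval the claim is trivial upon enlarging $C$.

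Next I would attack $J_n(t)h(x)$ using the elementary identity
\[
\frac{-t+\sqrt{t^2-r^2}}{2}=-\frac{r^2}{2(t+\sqrt{t^2-r^2})}\leq-\frac{r^2}{4t}
\]
together with the uniform pointwise bound $k_\ell(s)\leq Ce^s$ for $s\geq 0$, which follows from the boundedness of $k_\ell$ near zero combined with the asymptotic expansion $k_\ell(s)\sim Ce^s/s^{\ell+1/2}$ (odd $n$) or $\sim Ce^s/s^\ell$ (even $n$) recorded in Theorem \ref{thmdecom}. For $x\notin CS(h)+\f(t)B^n$ and $y\in\supp h$ we have $r=|x-y|\geq\f(t)$, so the kernel $e^{-t/2}k_\ell(\tfrac12\sqrt{t^2-r^2})$ is bounded by $C\exp(-\f(t)^2/(4t))$, and integration against $|h|$ over its support yields $|J_n(t)h(x)|\leq C\exp(-\f(t)^2/(4t))\|h\|_{L^1}$.

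The same argument handles $\tilde{J}_n(t)f(x)$, with the caveat that the definition of $\tilde{J}_n$ carries an extra factor of $t$ in front of the higher-order term $k_{(n+1)/2}$ (odd $n$) or $k_{n/2+1}$ (even $n$). To absorb this, I would retain the sharper asymptotic $k_\ell(s)\leq C(1+s)^{-\alpha}e^s$ and split according to whether $r\leq t/2$ or $r\in[t/2,t]$: in the first subregion $s=\tfrac12\sqrt{t^2-r^2}$ is of order $t$, so $s^{-\alpha}$ cancels the extra $t$; in the second, $(-t+\sqrt{t^2-r^2})/2$ is uniformly negative of order $t$, so $e^{-t/2}$ alone produces exponential decay that dominates any polynomial. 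The transfer from ``$r\geq\f(t)$ on $\supp h$'' to the corresponding inequality on $\supp f$ costs at most a constant depending on $\delta(CS(h),CS(f))$ and $d_f$ from Notation \ref{assump}, which is absorbed into $C$.

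The step I expect to be the main obstacle is the uniform handling of the polynomial prefactors across all admissible growth rates of $\f$: when $\f(t)$ is only $o(\sqrt{t})$, the target factor $\exp(-\f(t)^2/(4t))$ is of unit size, and the inequality essentially reduces to the uniform $L^\infty$ boundedness of $u$; the bookkeeping must produce no spurious factor of $t$ in that regime, which is precisely why the case split on $r$ and the use of the sharper asymptotic $(1+s)^{-\alpha}$ are needed rather than just the crude bound $k_\ell(s)\leq Ce^s$.
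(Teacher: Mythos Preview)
Your proposal is correct and follows essentially the same route as the paper: the same three-term decomposition from Proposition~\ref{prop_de_fg}, Lemma~\ref{lem_est_Wn} for the wave part, and a pointwise kernel bound of the form $e^{-t/2}k_\ell(\tfrac12\sqrt{t^2-r^2})\le C\exp(-r^2/(4t))$ for the heat parts. The one place the paper is more economical is $\tilde{J}_n(t)f$: rather than splitting into $r\le t/2$ and $r\in[t/2,t]$, the paper simply uses that $k_\ell$ is increasing to replace $r$ by $\f(t)$ and then invokes the asymptotic once at $s=\tfrac12\sqrt{t^2-\f(t)^2}\sim t/2$, where the $s^{-(n/2+1)}$ factor in the expansion already kills the extra~$t$---so your case split, while correct, is unnecessary. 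Your explicit remark about shifting from $\supp h$ to $\supp f$ at the cost of the constant $\de(CS(h),CS(f))$ is a point the paper glosses over with ``in the same manner.''
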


\begin{proof}
We give a proof for even dimensional cases. The other cases go parallel.

From Proposition \ref{prop_de_fg}, we have
\[
u(x,t) = J_n(t) h(x) + \tilde{J}_n(t)f (x) + e^{-t/2} \widetilde{\vect{W}}_n(t;f,g) (x) .
\]

Since the function $k_\ell$ is strictly increasing, for any $x \notin CS(h) + \f (t) B^n$, $y \in \supp h$ and $t >0$, we have
\[
e^{-t/2} k_{\frac{n}{2}} \( \frac{1}{2} \sqrt{t^2 -r^2} \) 
\leq e^{-t/2} k_{\frac{n}{2}} \( \frac{1}{2} \sqrt{t^2 -\f (t)^2} \) .
\]
Using the asymptotic expansion of $k_\ell$ in Theorem \ref{thmdecom}, for any $t>0$, we have
\[
e^{-t/2} k_{\frac{n}{2}} \( \frac{1}{2} \sqrt{t^2 -\f (t)^2} \) 
\leq C \exp \( \frac{-t+\sqrt{t^2 -\f (t)^2}}{2} \) 
\leq C \exp \( -\frac{\f  (t)^2}{4t} \) .
\]
Hence, from Lemma \ref{list_J}, we can take a positive constant $C$ such that, for any $x \notin CS(h) + \f (t) B^n$ and $t>0$, we have
\[
\lvert J_n (t) h (x) \rvert \leq C \exp \( -\frac{\f (t)^2}{4t} \) \| h\|_{L^1}.
\]

In the same manner, from Lemma \ref{list_tildeJ}, we have
\[
\lvert \tilde{J}_n (t) f (x) \rvert \leq C \exp \( -\frac{\f (t)^2}{4t} \) \| f\|_{L^1}.
\]

Combining these estimates and Lemma \ref{lem_est_Wn}, we get the conclusion.
\end{proof}

\begin{lem}\label{estbelow}
We use Notation \ref{assump}.
There exist a positive constant $C$ and a time $T \geq d_h$ such that, for any $t\ge T$, we have
\[
\inf_{x\in CS(h)} u(x,t) \ge Ct^{-n/2} \| h \|_{L^\infty}.
\]
\end{lem}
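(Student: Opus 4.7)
The plan is to isolate the heat-like part $J_n(t)h$ as the dominant contribution and show that it alone already produces a lower bound of order $t^{-n/2}$ on $CS(h)$; the remaining two pieces in the Nishihara-type decomposition of $u$ will be absorbed because they decay strictly faster in $t$.

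First, I invoke Proposition \ref{prop_de_fg} to write
\[
u(x,t) = J_n(t)h(x) + \tilde{J}_n(t)f(x) + e^{-t/2}\widetilde{\vect{W}}_n(t;f,g)(x),
\]
and bound the last two summands: by Lemma \ref{lem_tildeJ}(1), $|\tilde{J}_n(t)f(x)| \le C(1+t)^{-n/2-1}\|f\|_{L^1}$, and by Lemma \ref{lem_est_Wn} the wave term is controlled by $C e^{-t/2}(1+t)^{n}(\|f\|_{W^{*,\infty}}+\|g\|_{W^{*,\infty}})$. Both are $o(t^{-n/2})$ as $t\to\infty$, so after choosing $T$ large they contribute at most, say, half of the main term in absolute value.

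The core step is the lower bound on $J_n(t)h(x)$ for $x\in CS(h)$. For $t\ge d_h$ the ball $B_t^n(x)$ contains $\supp h$, hence
\[
J_n(t)h(x) = c'_n \int_{\supp h} e^{-t/2}\, k_{\ell_n}\!\left(\tfrac{1}{2}\sqrt{t^2-r^2}\right) h(y)\,dy,
\]
with $\ell_n=(n-1)/2$ or $n/2$ according to the parity of $n$, and with $r=|x-y|\le d_h$. I use the asymptotic expansions of $k_\ell$ from Theorem \ref{thmdecom}: in both parities one obtains, for $s$ large,
\[
k_{\ell_n}(s) = \frac{e^{s}}{s^{\ell_n}}\cdot \frac{1}{2\sqrt{\pi s}\;\text{or}\;2}\bigl(1+O(s^{-1})\bigr),
\]
so that with $s=\tfrac{1}{2}\sqrt{t^2-r^2}$,
\[
e^{-t/2}\,k_{\ell_n}\!\left(\tfrac{1}{2}\sqrt{t^2-r^2}\right)
= \frac{C_n}{t^{n/2+\kappa}}\,\exp\!\left(\tfrac{1}{2}(\sqrt{t^2-r^2}-t)\right)\bigl(1+O(t^{-1})\bigr),
\]
where $\kappa=0$ or a term absorbed by the factor $(t^2-r^2)^{-1/4}\sim t^{-1/2}$ in the odd case; a careful bookkeeping shows the net power is $t^{-n/2}$. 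Since $\sqrt{t^2-r^2}-t=-r^2/(\sqrt{t^2-r^2}+t)\ge -d_h^2/t$, the exponential factor is at least $1/2$ for $t\ge t_1(d_h)$. Therefore, uniformly in $x\in CS(h)$ and $y\in\supp h$,
\[
e^{-t/2}\,k_{\ell_n}\!\left(\tfrac{1}{2}\sqrt{t^2-r^2}\right) \ge c\, t^{-n/2}
\]
for all sufficiently large $t$, whence $J_n(t)h(x)\ge c\,t^{-n/2}\|h\|_{L^1}$.

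Combining this with the bounds on the two remaining summands, for $t$ larger than some $T\ge d_h$ we obtain
\[
u(x,t) \ge \tfrac{1}{2}J_n(t)h(x) \ge \tfrac{c}{2}\,t^{-n/2}\,\|h\|_{L^1},
\]
uniformly in $x\in CS(h)$, and then we absorb $\|h\|_{L^1}/\|h\|_{L^\infty}$ (a fixed positive constant depending on the given $h$) into the constant, yielding $\inf_{x\in CS(h)} u(x,t)\ge C t^{-n/2}\|h\|_{L^\infty}$. The delicate point is not the remainder terms, which are easy by the previous lemmas, but the uniform-in-$r$ lower bound on the kernel: one must check that the exponential factor $\exp\bigl(\tfrac12(\sqrt{t^2-r^2}-t)\bigr)$ stays bounded below and that the algebraic prefactor in the asymptotic of $k_{\ell_n}$ really gives the correct power $t^{-n/2}$ in both parities of $n$.
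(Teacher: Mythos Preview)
Your proposal is correct and follows essentially the same approach as the paper: decompose $u$ via Proposition~\ref{prop_de_fg}, bound the $\tilde{J}_n$ and wave pieces by Lemmas~\ref{lem_tildeJ} and~\ref{lem_est_Wn}, and use the asymptotic of $k_{\ell_n}$ from Theorem~\ref{thmdecom} to get a uniform $t^{-n/2}$ lower bound on the kernel of $J_n(t)h$ for $r\le d_h$. The only cosmetic difference is that the paper restricts the integral to a small ball $B_\rho^n(\eta)$ on which $h\ge \|h\|_{L^\infty}/2$, obtaining the factor $\|h\|_{L^\infty}$ directly, whereas you integrate over the full support to get $\|h\|_{L^1}$ and then absorb the ratio $\|h\|_{L^1}/\|h\|_{L^\infty}$ into the constant; both are legitimate since $C$ may depend on the fixed data $(f,g)$.
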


\begin{proof}
We give a proof for even dimensional cases. The other cases go parallel.

We remark that, from Lemmas \ref{lem_est_Wn} and \ref{lem_tildeJ}, we have the following estimates:
\[
\lvert  \widetilde{\vect{W}}_n(t;f,g)(x) \rvert  \le C (1+t)^{n} \( \| f\|_{W^{n/2,\infty}} + \|g \|_{W^{n/2-1 ,\infty}} \) ,\ 
\lvert \tilde{J}_n(t) f(x) \rvert  \le C (1+t)^{-n/2-1} \| f \|_{L^1}.
\]

Let us estimate the function $J_n(t) h(x)$. Since $h$ is non-negative, there is a point $\eta \in CS(h)$ such that, for any $y \in B_\rho^n(\eta )$, $h(y) \geq \| h \|_{L^\infty} /2$. Using the asymptotic expansion of $k_{n/2} (s)$ in Theorem \ref{thmdecom}, we have
\begin{align*}
	&J_n(t)h(x) \\
	&\ge \frac{c_n \| h \|_{L^{\infty}}}{2^{n-1}} e^{-t/2}
		\int_{B_{\rho}^n(\eta )} k_{\frac{n}{2}} \( \frac{1}{2}\sqrt{t^2-r^2} \)dy \\
	&= \frac{c_n \| h \|_{L^{\infty}} }{2^{n/2-2}} \int_{B_{\rho}^n(\eta )}
		t^{-n/2} \( 1+ O \( \frac{1}{t^2} \) \)
		\(  1 + O\( \frac{1}{t} \) \) \( 1 + O\( \frac{1}{t} \) \) dy\\
	&\ge C t^{-n/2} \| h \|_{L^\infty}
\end{align*}
for any sufficiently large $t$.

Hence, for any sufficiently large $t$, we obtain
\begin{align*}
\lvert u(x,t) \rvert 
&\geq \lvert J_n(t) h(x) \rvert - \lvert \tilde{J}_n(t) f(x) \rvert -e^{-t/2} \lvert \widetilde{\vect{W}}_n(t;f,g)(x) \rvert \\
&\geq Ct^{-n/2}\| h \|_{L^\infty} -Ct^{-n/2-1}\| f \|_{L^1} -Ce^{-t/2} t^n \( \| f\|_{W^{n/2,\infty}} + \|g \|_{W^{n/2-1 ,\infty}} \) \\
&\geq Ct^{-n/2} \| h \|_{L^\infty} ,
\end{align*}
which completes the proof.
\end{proof}

\begin{cor}\label{location_hotspot}
We use Notation \ref{assump}. Suppose that $\exp ( -\f (t)^2 /(4t))$ is of small order of $t^{-n/2}$ as $t$ goes infinity. There exists a constant $T \geq T_0 (\f )$ such that, for any $t \geq T$, all of the time-delayed hot spots are contained in the parallel body $CS(h) + \f (t) B^n$.
\end{cor}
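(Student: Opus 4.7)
The plan is to combine the two estimates that immediately precede this corollary: the exponential decay of $u(\cdot,t)$ outside the parallel body $CS(h)+\f(t)B^n$ from Lemma \ref{expdecay}, and the polynomial lower bound of $u(\cdot,t)$ on $CS(h)$ itself from Lemma \ref{estbelow}. Since any time-delayed hot spot $x^{\ast}$ at time $t$ must realise the global maximum, in particular $u(x^{\ast},t) \ge \inf_{\xi \in CS(h)} u(\xi,t)$; so if $x^{\ast}$ were to lie outside $CS(h)+\f(t)B^n$, its value would have to be at least of order $t^{-n/2}$ while simultaneously being bounded above by an exponentially decaying quantity, which is impossible for large $t$.

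Concretely, I would first invoke Lemma \ref{estbelow} to secure a constant $C_1>0$ and a time $T_1 \ge d_h$ such that
\[
\max_{\xi\in\R^n} u(\xi,t) \;\ge\; \inf_{x\in CS(h)} u(x,t) \;\ge\; C_1 t^{-n/2}\|h\|_{L^\infty}
\quad \text{for all } t \ge T_1 .
\]
Next, I would apply Lemma \ref{expdecay} to produce a constant $C_2>0$ such that, for every $x \notin CS(h)+\f(t)B^n$ and every $t>0$,
\[
|u(x,t)| \;\le\; C_2 \exp\( -\frac{\f(t)^2}{4t} \) \( \|h\|_{L^1} + \|f\|_{L^1} + \|f\|_{W^{*,\infty}} + \|g\|_{W^{*,\infty}} \).
\]

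By the standing hypothesis $\exp(-\f(t)^2/(4t)) = o(t^{-n/2})$, one can then choose a time $T \ge \max\{T_1, T_0(\f)\}$ beyond which the right-hand side of the last display is strictly smaller than $C_1 t^{-n/2}\|h\|_{L^\infty}$. For such $t$, no point outside $CS(h)+\f(t)B^n$ can attain $\max_{\xi \in \R^n} u(\xi,t)$, so $\mathcal{H}(t) \subset CS(h)+\f(t)B^n$. There is essentially no obstacle here once Lemmas \ref{expdecay} and \ref{estbelow} are in hand; the only subtle point is that the growth assumption on $\f$ is exactly the calibration needed so that the polynomial lower bound on $CS(h)$ beats the exponential tail estimate.
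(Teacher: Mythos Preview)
Your argument is correct and is exactly the intended one: the paper states this corollary without proof, precisely because it is the immediate comparison of Lemma~\ref{expdecay} with Lemma~\ref{estbelow} that you spell out. There is nothing to add.
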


\begin{thm}\label{movement}
Let $f$ and $g$ be as in $(fg)$ in Notation \ref{assump}.
\begin{enumerate}[$(1)$]
\item There exists a time $T \geq \max \{ d_h ,\ \de (CS(h) , CS(f) )+ d_f \}$ such that, for any $t\geq T$, all of the time-delayed hot spots at time $t$ are contained in the convex hull of $h$.
\item We have
\[
\sup \left\{ \lvert x - m_h \rvert  \lvert x \in \mathcal{H}(t) \right\} \right. = O \( \frac{1}{t} \) 
\]
as $t$ goes to infinity.
\end{enumerate}
\end{thm}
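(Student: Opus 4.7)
The plan is to combine the two corollaries already proved in this section: Corollary \ref{location_hotspot} confines the hot spots to a thin parallel body around $CS(h)$ by a size-of-value argument (outside values decay exponentially, while inside values are polynomially bounded below), and Corollary \ref{location_critical} then confines the critical points inside that parallel body to $CS(h)$ itself, with quantitative distance $O(1/t)$ from $m_h$. So the task reduces to choosing a single non-decreasing function $\varphi$ that simultaneously satisfies hypothesis $(\varphi)$ of Notation \ref{assump} and the smallness requirement of Corollary \ref{location_hotspot}, namely that $\exp(-\varphi(t)^2/(4t))=o(t^{-n/2})$ as $t\to\infty$.

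First I would fix such a $\varphi$. A convenient choice is
\[
\varphi(t)=\sqrt{(2n+1)\,t\log(2+t)},
\]
which vanishes at $t=0$, is non-decreasing, and satisfies $\varphi(t)/t\to 0$; moreover
\[
\exp\!\(-\frac{\varphi(t)^2}{4t}\)=(2+t)^{-(2n+1)/4}=o\(t^{-n/2}\),
\]
so the hypothesis of Corollary \ref{location_hotspot} holds as well. Applying Corollary \ref{location_hotspot} with this $\varphi$ gives a time $T_1\geq T_0(\varphi)$ such that
\[
\mathcal{H}(t)\subset CS(h)+\varphi(t)B^n\qquad\text{for all }t\geq T_1.
\]

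Next, using the remark in Notation \ref{assump} that every hot spot is a critical point of $u(\cdot,t)$, I combine the preceding inclusion with
\[
\mathcal{H}(t)\subset \mathcal{C}\(u(\cdot,t)\)\cap\(CS(h)+\varphi(t)B^n\).
\]
Corollary \ref{location_critical}(1), applied with the same $\varphi$, then produces a time $T_2\geq T_1$ after which this intersection is already contained in $CS(h)$; this proves (1). For (2), Corollary \ref{location_critical}(2) yields
\[
\sup\left\{\,\lvert x-m_h\rvert\;\bigl|\;x\in \mathcal{C}(u(\cdot,t))\cap(CS(h)+\varphi(t)B^n)\right\}=O(1/t),
\]
and since $\mathcal{H}(t)$ is contained in that intersection for $t\geq T_2$, the same estimate follows for $\sup\{|x-m_h|:x\in\mathcal{H}(t)\}$.

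No new analytic work is needed; the main obstacle has already been absorbed into the preparatory corollaries. The only delicate point of the present proof is the compatibility check: one must pick a single $\varphi$ obeying both growth constraints at once, and one must invoke the hot-spot/critical-point inclusion to bridge Corollary \ref{location_hotspot} and Corollary \ref{location_critical}. Once these bookkeeping steps are in place, both assertions of the theorem are immediate.
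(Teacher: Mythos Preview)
Your proposal is correct and follows essentially the same approach as the paper's proof: choose a function $\varphi$ satisfying both the condition $(\varphi)$ of Notation \ref{assump} and the extra decay hypothesis of Corollary \ref{location_hotspot}, then apply Corollary \ref{location_hotspot} followed by Corollary \ref{location_critical}. Your version is simply more explicit---you give a concrete $\varphi$, check its properties, and spell out the hot-spot/critical-point inclusion---whereas the paper merely asserts the existence of a suitable $\varphi$ and invokes the two corollaries in one sentence.
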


\begin{proof}
We take a function $\f$ as in $(\f)$ in Notation \ref{assump} such that $\exp ( -\f (t)^2 /(4t))$ is of small order of $t^{-n/2}$ as $t$ goes to infinity. Then, Corollary \ref{location_hotspot} guarantees that all of the time-delayed hot spots are contained in the parallel body $CS(h)+\f (t)B^n$ after a large time. Hence, Corollary \ref{location_critical} implies the conclusion. 
\end{proof}

\begin{rem}
{\rm 
Let $g$ be a non-zero non-negative smooth function with compact support. If $n=1$ and $f=0$, then, for any $t\geq 0$, all of the time-delayed hot spots are contained in the convex hull of $\supp h= \supp g$. In other words, in this case, we can take $T=0$ in the first assertion of Theorem \ref{movement}.
}
\end{rem} 

\begin{proof}
Fix a point $x$ in the complement of the convex hull of $\supp g$. Let $x'$ be the point that gives the distance between $x$ and the convex hull of $\supp g$. We have $\vert x-y \vert > \vert x' -y \vert$ for any $y$ in $\supp g$, and $[ x-t, x+t] \cap \supp g$ is contained in $[ x'-t, x'+t] \cap \supp g$. Hence the strictly increasing behavior of $I_0$ implies
\begin{align*}
S_1 (t) g(x) 
&= \frac{e^{-t/2}}{2} \int_{x-t}^{x+t} I_0 \( \frac{1}{2} \sqrt{t^2 -\lvert x-y \rvert^2} \) g(y) dy \\
&\leq \frac{e^{-t/2}}{2} \int_{x'-t}^{x'+t} I_0 \( \frac{1}{2} \sqrt{t^2 -\lvert x-y \rvert^2} \) g(y) dy \\
&< \frac{e^{-t/2}}{2} \int_{x'-t}^{x'+t} I_0 \( \frac{1}{2} \sqrt{t^2 -\lvert x'-y \rvert^2} \) g(y) dy \\
&=S_1 (t) g \( x' \) ,
\end{align*}
which completes the proof.
\end{proof}

\subsection{Uniqueness of a time-delayed hot spot}
In [JS], 
Jimbo and Sakaguchi showed that 
the set of hot spots
$H_g(t)$
consists of one point for sufficiently large $t$.
For the damped wave equation, let us show the corresponding result to [JS].

\begin{lem}\label{concavity}
We use Notation \ref{assump}. There exists a time $T \geq \max \{ d_h, \de (CS (f) ,CS (h)) +d_f \}$ such that, for any $t\geq T$, the function $u(\cdot ,t)$ becomes strictly concave on the convex hull of the support of $h$. 
\end{lem}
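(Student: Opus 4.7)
The aim is to show that the Hessian of $u(\cdot,t)$ is uniformly negative definite on $CS(h)$ once $t$ is large enough; concretely, that there exists $C>0$ so that, for every $\ome\in S^{n-1}$, every $x\in CS(h)$ and every sufficiently large $t$,
\[
(\ome\cdot\nabla)^2 u(x,t)\le -\frac{C\|h\|_{L^1}}{t^{n/2+1}} .
\]
Starting from the decomposition
\[
u(x,t)=J_n(t)h(x)+\tilde{J}_n(t)f(x)+e^{-t/2}\widetilde{\vect{W}}_n(t;f,g)(x)
\]
of Proposition \ref{prop_de_fg}, the plan is to handle the three summands in turn. By Lemma \ref{lem_est_Wn}, the second derivatives of the wave part are bounded by a polynomial in $t$ times Sobolev norms of $f,g$, so that $e^{-t/2}(\ome\cdot\nabla)^2\widetilde{\vect{W}}_n(t;f,g)$ is exponentially small. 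Choosing $R>0$ with $CS(h)\subset CS(f)+RB^n$, Lemma \ref{lem_tildeJ}(3) bounds $|(\ome\cdot\nabla)^2\tilde{J}_n(t)f(x)|$ by $C(1+t)^{-n/2-2}\|f\|_{L^1}$ on $CS(h)$. For large $t$ both contributions are dominated by the target bound $t^{-n/2-1}$.

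The core of the argument is therefore the analysis of $(\ome\cdot\nabla)^2 J_n(t)h(x)$ via the identities of Lemma \ref{list_J}, which express this Hessian as the sum of two ``boundary-type'' integrals and two volume integrals weighted by $k_\ell(\tfrac{1}{2}\sqrt{t^2-r^2})$ for $\ell$ around $n/2$. Because $h$ has compact support and $x\in CS(h)$, any point of the form $x+t\theta$ with $\theta\in S^{n-1}$ lies outside $\supp h$ as soon as $t>d_h$; hence the surface integrals in the odd-dimensional formula vanish identically, while in the even-dimensional case the integrals over $B^n$ are supported on $|z|\le d_h/t$ and, after the rescaling $w=tz$, are of size $O(e^{-t/2}t^{-1})$. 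Up to exponentially small errors, the Hessian of $J_n(t)h$ thus reduces to two volume integrals
\[
-a_n(t)\int_{B^n_t(x)}k_\ell\!\(\tfrac{1}{2}\sqrt{t^2-r^2}\)h(y)\,dy
+b_n(t)\int_{B^n_t(x)}k_{\ell+1}\!\(\tfrac{1}{2}\sqrt{t^2-r^2}\)h(y)(\ome\cdot(x-y))^2 dy,
\]
with $\ell=(n+1)/2$ or $\ell=n/2+1$ depending on parity and $a_n(t),b_n(t)$ the corresponding positive $e^{-t/2}$-prefactors.

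Plugging in the asymptotic expansions of $k_\ell$ from Theorem \ref{thmdecom} (the calculation underlying Lemma \ref{En_asymp}) and using $r=|x-y|\le d_h$ on $CS(h)\times\supp h$ yields, uniformly in $(x,\ome)\in CS(h)\times S^{n-1}$,
\[
(\ome\cdot\nabla)^2 J_n(t)h(x)=-\frac{\|h\|_{L^1}}{2^{n+1}\pi^{n/2}t^{n/2+1}}\(1+O\(t^{-1}\)\)+O\(t^{-n/2-2}\) ,
\]
the leading coefficient coinciding with that of $(\ome\cdot\nabla)^2 P_n(t)h$ at the centroid — a convenient sanity check. Combining this with the estimates on $\tilde{J}_n(t)f$ and the wave part finishes the proof. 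The main obstacle is precisely this last asymptotic identification: one has to handle both parities of $n$ (and the one-dimensional case from Lemma \ref{list_J}(1), where $I_1,I_2$ play the roles of $k_{n/2+1},k_{n/2+2}$) simultaneously, keep the remainder estimates uniform in $x$ and $\ome$, and verify that the leading contribution is a strictly negative scalar multiple of the identity — not merely a matrix of negative trace.
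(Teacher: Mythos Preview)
Your proposal is correct and follows essentially the same route as the paper: decompose $u$ via Proposition \ref{prop_de_fg}, dispose of $(\ome\cdot\nabla)^2\tilde{J}_n(t)f$ and $e^{-t/2}(\ome\cdot\nabla)^2\widetilde{\vect{W}}_n$ by Lemmas \ref{lem_tildeJ}(3) and \ref{lem_est_Wn}, and obtain the dominant negative term from $(\ome\cdot\nabla)^2 J_n(t)h$ through the asymptotics of $k_\ell$ in Theorem \ref{thmdecom}. You are in fact slightly more explicit than the paper about why the surface/boundary integrals in Lemma \ref{list_J} are negligible (vanishing for odd $n$, of size $O(e^{-t/2})$ after rescaling for even $n$), which the paper absorbs into the phrase ``from Lemma \ref{list_J}''; this extra care is fine and does not change the argument.
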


\begin{proof}\normalfont
Let us give a proof for even dimensional cases. The other cases go parallel. 

In view of Proposition \ref{prop_de_fg}, we estimate the second derivatives of $J_n(t)h$, $\tilde{J}_n(t)f$ and $e^{-t/2} \widetilde{\vect{W}}_n(t;f,g)$. We fix a a point $x \in CS(h)$ and a direction $\ome \in S^{n-1}$. 

From the asymptotic expansion of $k_\ell$ in Theorem \ref{thmdecom}, there exists a positive constant $C$ such that, for any $y \in \supp h$ and $t \geq d_h$, we have
\begin{align*}
&e^{-t/2}\left[ -4k_{\frac{n}{2}+1} \( \frac{1}{2} \sqrt{t^2 -r^2} \) + k_{\frac{n}{2} +2} \( \frac{1}{2} \sqrt{t^2 -r^2} \) \( \ome \cdot (x-y ) \)^2 \right] \\
&= \frac{2^{n/2+2}}{\( t^2-r^2 \)^{n/4+1/2}} \exp \( \frac{-t+\sqrt{t^2 -r^2}}{2} \) \( -1 + O\( \frac{1}{t} \) \) \\
&\leq -C t^{-n/2-1} .
\end{align*}
Hence, from Lemma \ref{list_J}, there exists a positive constant $C$ such that, for any $x \in CS(h)$, $\ome \in S^{n-1}$ and $t \geq d_h$, we have
\[
\( \ome \cdot \nabla \)^2 J_n (t) h(x) \leq -C t^{-n/2-1} \| h \|_{L^1}.
\]

In the same manner, from Lemma \ref{list_tildeJ}, we can obtain the existence of a positive constant $C$ such that, for any $x \in CS(h)\subset CS(f)+ \de (CS(f) ,CS(h) ) B^n$, $\ome \in S^{n-1}$ and $t \geq  \de (CS(f) ,CS(h)) +d_f$, we have
\[
\lvert \( \ome \cdot \nabla \)^2 \tilde{J}_n(t) f(x) \rvert \leq Ct^{-n/2 -2} \| f\|_{L^1}. 
\]

On the other hand, from Lemma \ref{lem_est_Wn}, there exists a positive constant $C$ such that, for any $x \in CS(h)$, $\ome \in S^{n-1}$ and $t>0$, we have
\[
e^{-t/2} \lvert \( \ome \cdot \nabla \)^2 \widetilde{\vect{W}}_n(t; f,g) (x) \rvert \leq Ce^{-t/2}(1+t)^{n} \( \| f \|_{W^{n/2+2,\infty}} + \| g \|_{W^{n/2+1 ,\infty}} \).
\]

Hence  we obtain the strict concavity of $u(\cdot ,t)$ on $CS(h)$ for any sufficiently large $t$.
\end{proof}

\begin{prop}\label{uniqueness}
We use Notation \ref{assump}. There exists a time $T \geq \max \{ T_0 (\f), \de (CS (f) ,CS (h)) +d_f \}$ such that, for any $t\ge T$, the set of critical points of $u(\cdot ,t)$ contained in the parallel body $CS(h) + \f (t) B^n$ consists of one-point.
\end{prop}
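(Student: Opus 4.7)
The plan is to combine the two immediately preceding results, Corollary \ref{location_critical} and Lemma \ref{concavity}, which together essentially give the conclusion.

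First, I would invoke Corollary \ref{location_critical}(1) to pick a time $T_1 \geq T_0(\varphi)$ such that, for every $t \geq T_1$, every critical point of $u(\cdot,t)$ lying in the parallel body $CS(h) + \varphi(t)B^n$ is in fact already contained in the convex body $CS(h)$. This reduces the problem of counting critical points in the (growing) set $CS(h) + \varphi(t)B^n$ to counting them inside the fixed convex set $CS(h)$.

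Next, I would invoke Lemma \ref{concavity} to pick a time $T_2 \geq \max\{d_h,\,\delta(CS(f),CS(h)) + d_f\}$ such that $u(\cdot,t)$ is strictly concave on $CS(h)$ for every $t \geq T_2$. A strictly concave $C^2$ function on a convex set has at most one critical point, because if $p$ and $q$ were two distinct critical points then restricting to the segment $[p,q]$ would give a strictly concave function of one variable with $u'=0$ at both endpoints, which is impossible.

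Setting $T = \max\{T_1, T_2\}$, for any $t \geq T$ every critical point of $u(\cdot,t)$ in $CS(h) + \varphi(t)B^n$ lies in $CS(h)$, and the strict concavity of $u(\cdot,t)$ on the convex set $CS(h)$ forces there to be at most one such point. The existence of at least one critical point in $CS(h)$ follows, for example, from Theorem \ref{movement}(1), which places an actual time-delayed hot spot (and hence a critical point) in $CS(h)$ after a large enough time. No step looks to be a serious obstacle here, since all the hard analytic work has been absorbed into Corollary \ref{location_critical} and Lemma \ref{concavity}; the only thing to verify is the elementary convex-analysis fact that a strictly concave function on a convex set has at most one critical point, which is a one-line consequence of restricting to a chord.
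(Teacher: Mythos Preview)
Your proof is correct and follows essentially the same approach as the paper: invoke Corollary \ref{location_critical} to confine all critical points in $CS(h)+\varphi(t)B^n$ to $CS(h)$, then use the strict concavity from Lemma \ref{concavity} to conclude uniqueness. Your argument is in fact slightly more complete than the paper's terse proof, since you spell out the elementary convexity argument for uniqueness and explicitly handle existence via Theorem \ref{movement}(1).
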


\begin{proof}
From Corollary \ref{location_critical}, it is sufficient to show the strict concavity of the function $u(\cdot ,t)$ on the convex hull of the support of $h$. Hence Lemma \ref{concavity} guarantees the uniqueness of a critical point of $u(\cdot ,t)$.
\end{proof}

\subsection{Wave effect of the damped wave in view of time-delayed hot spots}

In this subsection, we investigate the wave properties of the damped wave equation in view of the movement of time-delayed hot spots.
We give some examples of initial data $(f,g)$ which allow time-delayed hot spots to escape from the convex hull of the support of $h:=f+g$ for some small time.

\begin{ex}\label{ex1}
{\rm Let $n=1$. We consider the equation \eqref{DWfg} with $g=0$.
By Example \ref{decom1} and Proposition \ref{prop_de_fg}, we have
\begin{align*}
u(x,t)
&=S_1(t)f(x)+ \tilde{J}_1(t) f(x) +e^{-t/2} \widehat{\vect{W}}_n(t) f(x) \\
&=\frac{e^{-t/2}}{4} \int_{x-t}^{x+t} \( \frac{t}{\sqrt{t^2 -r^2}} I_1 \( \frac{1}{2} \sqrt{t^2 -r^2} \) +I_0 \( \frac{1}{2} \sqrt{t^2 -r^2} \) \) f(y) dy \\
&\quad +\frac{e^{-t/2}}{2}\( f(x+t)+f(x-t)\)
\end{align*}
Let us give an example such that if the initial datum $f$ has a sufficiently large maximum value and a constant $L^1$ norm, then, for some $t$, time-delayed hot spots escape from the convex hull of the support of $f$.

Let
$\rho$ be a non-negative smooth function satisfying
$\supp \rho = [-2, 2]$, $\| \rho \|_{L^1}=1$ and
$\rho(y)\ge \| \rho \|_{L^{\infty}}/2$ for $-1 \leq y \leq 1$.
For example, if we normalize the function
\[
	\tilde{\rho}(y) = 
	\begin{cases}
	\ds \exp \( -\frac{1}{4-y^2} \) &(-2 \leq y \leq 2),\\
	0                               &({\rm otherwise}),
	\end{cases}
\]
then the normalized function
$\tilde{\rho}/\| \tilde{\rho}\|_{L^1}$
satisfies the conditions of $\rho$. We define
$f_\ep (y)=\rho(y/\ep)/\ep$
with a small parameter
$\ep>0$.
Then we have
$f_\ep (x+t)\geq \|\rho\|_{L^{\infty}}/(2\ep )$
for
$-t-\ep \leq x \leq -t+\ep$
and
$f_\ep (x-t) \geq \|\rho\|_{L^{\infty}}/(2\ep)$
for
$t-\ep \leq x \leq  t+\ep$.

On the other hand,
noting
$\| f_\ep \|_{L^1}=1$,
we can choose a constant
$C$ independent of $\ep$ such that we have
\[
\| S_1 (t)f+ \tilde{J}_1(t) f \|_{L^\infty} \leq C e^{-t/2}\( I_0 \(\frac{t}{2} \) + (1+t) \( 1+ I_1 \( \frac{t}{2} \) \) \) .
\]

Using the facts $I_0(0)=1$ and $I_1(0)=0$, we can take the parameter $\ep$ sufficiently small so that there is a time $t\geq 4\ep$ satisfying the inequality
\[
	\frac{e^{-t/2}}{2}\frac{\| f_\ep \|_{L^\infty}}{2} 
	=\frac{e^{-t/2}}{2}\frac{\| \rho \|_{L^\infty}}{2\ep} 
	> Ce^{-t/2} \( I_0\(\frac{t}{2} \)+ (1+t) \( 1+ I_1 \( \frac{t}{2}\) \)\) .
\] 
Hence if  $t\geq 4\ep$ satisfies the above inequality and
$x \in [t-\ep, t+\ep]\cup [-t-\ep, -t+\ep]$
then we have $u(x,t) > u(\xi ,t)$ for
any $\xi \in \supp f_\ep$.
}
\end{ex}

\begin{ex}\label{ex2} 
{\rm
Let $n=2$. We consider the damped wave equation \eqref{DWfg} with $f=0$.
By Proposition \ref{propsol}, we have
\[
u(x,t)=S_2(t)g(x)
=\frac{e^{-t/2}}{2\pi}\int_{B_t^2(x)} \frac{\cosh(\frac{1}{2}\sqrt{t^2-r^2})}{\sqrt{t^2-r^2}}g(y)dy.
\]
Let us show that, if we choose a clever initial datum $g$, then, for some $t$, $S_2(t)g$ has a (non-trivial) critical point in the complement of the convex hull of $\supp g$.

Let $s_*$ be the unique critical point of the function $\cosh(s/2)/s$. Direct computation shows $2<s_*<3$.
Fix a small positive parameter $\ep$ with $2 \ep < s_*$. Then, we have $s_* < (s_*^2 +4\ep^2)/(4\ep)$.
Let $g_\ep$ be a non-zero non-negative radially symmetric smooth function with support $B_{\ep}^2(0)$. Let us show that, for any $s_* \leq t \leq (s_*^2 +4\ep^2)/(4\ep)$, the function $S_2(t)g$ has a critical point in the complement of the disk of radius $\sqrt{t^2 -s_*^2}+\ep$ centered at origin. 

Fix a point $x$ with $\vert x \vert = \sqrt{t^2 -s_*^2}+\ep$. We remark that, for any $y \in B_\ep^2 (0)$, we have the following inequalities:
\[
\lvert x-y \rvert < t ,\ \sqrt{t^2 - \lvert x -y \rvert^2} \leq s_* .
\]
Let
\[
\de = \frac{t- \sqrt{t^2 -s_*^2}-2\ep}{2} \frac{1}{\sqrt{t^2 -s_*^2} +\ep}.
\]
Then, for the point $x':= (1+\de )x$ and any $y \in B_\ep^2(0)$, we have the following inequalities:
\[
\lvert x'-y \rvert <t,\ \sqrt{t^2 - \lvert x' -y \rvert^2} \leq s_*.
\]
Therefore, we have
\begin{align*}
	S_2(t)g_\ep (x)&=\frac{e^{-t/2}}{2\pi}\int_{B_t^2(x)}
		\frac{\cosh \( \frac{1}{2}\sqrt{t^2-|x-y|^2} \)}{\sqrt{t^2-|x-y|^2}}g_\ep(y)dy\\
	&< \frac{e^{-t/2}}{2\pi}\int_{B_t^2(x')}
		\frac{\cosh \( \frac{1}{2}\sqrt{t^2-\lvert x' -y\rvert^2} \)}{\sqrt{t^2-\lvert x'-y\rvert^2}}g_\ep (y)dy\\
	&=S_2(t)g_\ep \( x'\) .
\end{align*}
Thanks to the compactness of the support of $S_2(t)g_\ep$, for each direction $\ome \in S^1$, we get the existence of a maximal point of the function
\[
\( \sqrt{t^2 -s_*^2}+\ep ,t+\ep \) \ni \rho \mapsto S_2(t) g_\ep (\rho \ome ) \in \R.
\] 
Since the function $g_\ep$ is radially symmetric, the function $S_2(t) g_\ep$ so is, and we obtain the existence of a critical point of $S_2(t) g_\ep$ in the complement of the disk of radius $\sqrt{t^2 -s_*^2}+\ep$ centered at origin. 
}
\end{ex}

\begin{ex}\label{ex3} 
{\rm Let $n=2$. We consider the damped wave equation \eqref{DWfg} with $f=0$ again. 
Let $g$ be a non-zero non-negative smooth function with compact support.
Suppose $2 d_g <s_*$. 
Let us show that, for any $2d_g \leq t \leq s_*$, 
there exists a point $x$ in the complement of $CS(g)$ such that, 
for any point $\xi \in CS( g)$, we have $S_2(t) g(\xi ) < S_2 (t) g(x)$.
In other words, if $g$ has a small support so that $d_g <s_* /2$, then, for any $2d_g \leq t \leq s_*$, time-delayed hot spots escape from the convex hull of the support of $g$. 

Fix an arbitrary time $2d_g \leq t \leq s_*$. We can choose a point $x \in CS( g)^c$ which satisfies the following conditions:
\[
\max_{y\in \supp g} \lvert x-y \rvert =t,\ \min_{y \in \supp g} \lvert x-y \rvert \geq t-d_g .
\]
For such a point $x$, any $\xi \in CS( g)$ and $y \in \supp g$, the assumption of $t$ implies 
\[
0 \leq \sqrt{t^2 -\lvert x-y \rvert^2} \leq \sqrt{\(2t-d_g\)d_g} \leq \sqrt{t^2 -d_g^2} \leq \sqrt{t^2 -\lvert \xi -y \rvert^2} \leq t,
\]
and the strictly decreasing behavior of $\cosh(s/2)/s$ for $0<s<s_*$ implies
\[
\frac{\cosh \( \frac{1}{2} \sqrt{t^2 -\lvert x-y \rvert^2}\)}{\sqrt{t^2 -\lvert x-y \rvert^2}} \geq \frac{\cosh \( \frac{1}{2} \sqrt{t^2 -\lvert \xi-y \rvert^2}\)}{\sqrt{t^2 -\lvert \xi-y \rvert^2}}.
\]
Hence  we obtain $S_2(t) g(x) > S_2(t) g(\xi )$ for any $\xi \in CS( g)$.
}
\end{ex}

\begin{ex}\label{ex4}
{\rm 
Let $n=3$. We consider the damped wave equation \eqref{DWfg} with $f=0$.  
By Example \ref{decom23}, we have
\[
	S_3(t)g =J_3(t)g +e^{-t/2}W_3(t)g .
\]
Let us give an example such that if the initial datum $g$ has a sufficiently large maximum value and a constant $L^1$ norm, then, for some $t$, hot spots escape from the convex hull of the support of $g$.

Let $\rho$ be non-zero non-negative smooth function satisfying
$\supp \rho = B_2^3(0)$, $\| \rho \|_{L^1}=1$ and $\rho (y)\ge \| \rho \|_{L^\infty}/2$ on the unit ball $B^3$.
We define
$g_\ep (y)=\rho \( y/\ep \) /\ep^3$
with a small parameter $\ep >0$. 

If  $t>2\ep$ and $x \in S_t^2(0)$, then we have
\[
	\sigma_2\( S_t^2(x) \cap B^3_{\ep}(0) \) =\pi \ep^2, 
\]
and then, we get
\[
	e^{-t/2}W_3(t)g_\ep(x)
	=\frac{e^{-t/2}}{4\pi t}\int_{S_t^2(x)}g_\ep (y)d\sigma_2(y)
	\ge \frac{\ep^2 e^{-t/2}}{8t}\| g_\ep\|_{L^\infty}
	=\frac{e^{-t/2}}{8\ep t} \| \rho \|_{L^\infty}.
\]

On the other hand, as we will see in \eqref{4_lem2_eq1},
$J_3(t)g_{\ep}$
is estimated by
\[
	\| J_3(t)g_\ep \|_{L^\infty} 
	\leq C(1+t)^{-3/2}\|g_\ep \|_{L^1}
	=C(1+t)^{-3/2},
\]
where $C$ is independent of $\ep$.

We can take the parameter $\ep$ sufficiently small so that there is a time $t\geq 4\ep$ satisfying the inequality
\[
\frac{e^{-t/2}}{8\ep t}\| \rho \|_{L^\infty} >C(1+t)^{-3/2} .
\]
If $t \geq 4\ep$ satisfies the above inequality, then, for any $x \in S_t^2(0)$ and $\xi \in \supp g_\ep =B_{2\ep}^3(0)$, we have
\[
S_3(t) g_\ep (x) \geq e^{-t/2}W_3(t)g_\ep (x) > J_3(t)g_\ep (\xi ) = S_3(t) g_\ep (\xi),
\]
that is, time-delayed hot spots are not in (the convex hull of) the support of $g_\ep$.
}
\end{ex}

\section{Application of the Nishihara decomposition: {\boldmath $L^p$-$L^q$} estimates}
In this section, as an application of Theorem \ref{thmdecom},
we give
$L^p$-$L^q$ estimates for the solution of the damped wave equation \eqref{DWfg}.
In [HO, MN, Nis],
when $n\leq 3$, 
the following $L^p$-$L^q$ estimates were shown:
\begin{equation}
\label{4_LpLq}
	\left\| u(\cdot ,t)-P_n(t) \( f+g \) -e^{-t/2}\widetilde{\vect{W}}_n(t;f,g)\right\|_{L^p}
	\leq Ct^{-\frac{n}{2} \( \frac{1}{q}-\frac{1}{p} \) -1} \( \|f\|_{L^q}+\|g\|_{L^q} \),
\end{equation}
where $t>0$ and $1\leq q\leq p\leq \infty$.
In [Nar], when $n\ge 4$,
Narazaki showed
the following estimates:
\begin{align}
\label{4_Na}
	\left\|\mathcal{F}^{-1}\left[ \(\hat{u}(\cdot ,t)-\hat{v}(\cdot ,t)\) \chi \right] \right\|_{L^p}
	\le C \( 1+t \)^{-\frac{n}{2} \( \frac{1}{q}-\frac{1}{p} \) -1+\ep}
		\( \|f\|_{L^q}+\|g\|_{L^q} \),
\end{align}
where
$1\leq q\leq p\leq \infty$,
$\ep$
is an arbitrary small positive number,
$C=C(n,p,q,\ep)$
is a positive constant,
$\chi$
is a compactly supported radially symmetric smooth function satisfying
$\chi=1$
near the origin,
$v(x,t)=P_n(t)(f+g)(x)$,
$\hat{u}$ and $\hat{v}$
denote the Fourier transform of
$u$ and $v$,
respectively,
and
$\mathcal{F}^{-1}$
is the inverse Fourier transform.
Moreover, in the case where
$1<q<p<\infty$, $(p,q)=(2,2)$
or
$(p,q)=(\infty, 1)$,
we may take
$\ep=0$, that is, we have
\begin{equation}
\label{Nar_est2}
	\left\| \mathcal{F}^{-1}\left[ \( 1-\chi \) \( \hat{u}(\cdot, t)
	-e^{-t/2} \( \vect{M}_0(\cdot ,t )\hat{f}(\cdot ,t)+\vect{M}_1(\cdot ,t) \hat{g}(\cdot ,t) \) \) \right]
		\right\|_{L^p}
	\leq Ce^{-\de t} \|g\|_{L^q}
\end{equation}
for some
$\delta>0$,
where
$1<q\leq p<\infty$,
$C=C(n,p,q)$
is a positive constant,
and
\begin{align}
\nonumber
\vect{M}_1(\xi ,t)
&= \frac{1}{\sqrt{|\xi|^2-1/4}}\( \sin \( t|\xi| \) \sum_{0\le k<(n-1)/4}\frac{(-1)^k}{(2k)!}t^{2k}\Theta(\xi)^{2k} \right. \\
&\quad-\left. \cos \( t|\xi|\) \sum_{0\le k<(n-3)/4}\frac{(-1)^k}{(2k+1)!} t^{2k+1}\Theta(\xi)^{2k+1}\),\\
\nonumber
\vect{M}_0(\xi ,t)
&=\cos \( t|\xi| \) \sum_{0\le k<(n+1)/4}\frac{(-1)^k}{(2k)!}t^{2k}\Theta(\xi)^{2k}\\
&\quad +\sin \( t|\xi| \) \sum_{0\le k<(n-1)/4}\frac{(-1)^k}{(2k+1)!} t^{2k+1}\Theta(\xi)^{2k+1}+\frac{1}{2}\vect{M}_1(\xi ,t)
\end{align}
with
$\Theta(\xi)=|\xi|-\sqrt{|\xi|^2-1/4}$.

The aim of this section is to remove
the $\ep$
in the estimate \eqref{4_Na} and the restriction $q\neq 1$ and $p \neq \infty$.

\begin{thm}\label{4_thm2}
Let $1\leq q \leq p \leq \infty$.
Assume that the initial data $f$ and $g$ are $L^q$-integrable smooth functions.
Let $u$ be the solution to \eqref{DWfg}. There exists a positive constant $C$ such that, for any $t>0$, we have
\[
	\left\| u(\cdot, t)-P_n(t) \( f+g \) -e^{-t/2}\widetilde{\vect{W}}_n \( t;f,g\) \right\|_{L^p}
	\leq Ct^{-\frac{n}{2} \( \frac{1}{q}-\frac{1}{p} \) -1} \( \|f\|_{L^q}+\|g\|_{L^q} \) .
\]
\end{thm}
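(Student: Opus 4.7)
The plan is to combine the Nishihara decomposition from Proposition \ref{prop_de_fg} with Young's convolution inequality, and then estimate two explicit radial kernels by exploiting the sharp asymptotic expansions of $k_\ell$ in Theorem \ref{thmdecom}.

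\textbf{Reduction to kernel estimates.} By Proposition \ref{prop_de_fg}, $u = J_n(t)h + \tilde J_n(t)f + e^{-t/2}\widetilde{\vect{W}}_n(t;f,g)$ with $h=f+g$, so the left-hand side of the theorem equals $(J_n(t)-P_n(t))h + \tilde J_n(t)f$. Since $\|h\|_{L^q}\le\|f\|_{L^q}+\|g\|_{L^q}$, the required bound follows from
\[
\|(J_n(t)-P_n(t))h\|_{L^p}\le Ct^{-\frac{n}{2}(1/q-1/p)-1}\|h\|_{L^q}, \quad \|\tilde J_n(t)f\|_{L^p}\le Ct^{-\frac{n}{2}(1/q-1/p)-1}\|f\|_{L^q}.
\]
Each operator is a convolution with a radial kernel; writing the kernels of $J_n(t)$, $P_n(t)$, $\tilde J_n(t)$ as $K^J_n$, $G_n(x,t)=(4\pi t)^{-n/2}e^{-|x|^2/(4t)}$, $K^{\tilde J}_n$ respectively, Young's inequality with $1/r := 1-(1/q-1/p)$ reduces matters to
\[
\|K^J_n(\cdot,t)-G_n(\cdot,t)\|_{L^r} + \|K^{\tilde J}_n(\cdot,t)\|_{L^r} \le Ct^{-\frac{n}{2}(1-1/r)-1},
\]
using the identity $\frac{n}{2}(1-1/r)=\frac{n}{2}(1/q-1/p)$.

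\textbf{Estimating $K^J_n - G_n$.} By Theorem \ref{thmdecom}, $K^J_n(x,t)$ is a fixed constant times $e^{-t/2} k_\ell(\tfrac12\sqrt{t^2-|x|^2})$ for $|x|\le t$ and vanishes for $|x|>t$, where $\ell=(n-1)/2$ or $n/2$ according to parity. In the bulk region $|x|\le t/2$, the large-$s$ expansion of $k_\ell$ in Theorem \ref{thmdecom}, combined with the Taylor expansions $-t+\sqrt{t^2-|x|^2}=-|x|^2/(2t)+O(|x|^4/t^3)$ and $(t^2-|x|^2)^{-n/4}=t^{-n/2}(1+O(|x|^2/t^2))$, shows that the leading coefficients of $K^J_n$ and $G_n$ coincide, leaving the pointwise bound $|K^J_n(x,t)-G_n(x,t)|\le Ct^{-n/2-1}e^{-c|x|^2/t}$. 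In the annulus $t/2\le|x|\le t$ one has $|x|^2/(4t)\ge t/16$, so $G_n\le e^{-ct}t^{-n/2}$; likewise $K^J_n\le e^{-ct}t^{O(1)}$ by the prefactor $e^{-t/2}$ and the upper bound on $k_\ell$ at $s\le \tfrac{\sqrt3}{4}t$. For $|x|>t$, $K^J_n$ vanishes and $G_n$ is exponentially small. Taking the $L^r$ norm of the Gaussian pointwise bound yields the desired kernel estimate.

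\textbf{Estimating $K^{\tilde J}_n$.} Reading Proposition \ref{prop_de_fg}, $K^{\tilde J}_n(x,t)$ is a constant times $e^{-t/2}(tk_{\ell+1}(s)-2k_\ell(s))$ at $s=\tfrac12\sqrt{t^2-|x|^2}$. The key cancellation visible in Theorem \ref{thmdecom} is that the leading terms of $tk_{\ell+1}(s)$ and $2k_\ell(s)$ differ only by the factor $t/(2s)$; more precisely,
\[
tk_{\ell+1}(s)-2k_\ell(s) = \frac{e^s}{s^\ell}\left[\frac{t-2s}{2s}+O\!\left(\frac{1}{s}\right)\right],
\]
and $t-2s = t-\sqrt{t^2-|x|^2}\le |x|^2/t$ in the bulk. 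Combined with $e^{-t/2}$, which converts $e^s$ into $e^{(-t+\sqrt{t^2-|x|^2})/2}$, this yields the pointwise bulk bound $|K^{\tilde J}_n(x,t)|\le Ct^{-n/2-2}(1+|x|^2)e^{-c|x|^2/t}$, whose $L^r$ norm is of order $t^{-\frac{n}{2}(1-1/r)-1}$; the transition and exterior regions are controlled exactly as before. For bounded $t$, the constant $C$ absorbs direct pointwise estimates on the three kernels.

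\textbf{Main obstacle.} The delicate point is the sharp bookkeeping in the last two steps: one must track the expansion of $k_\ell$ in Theorem \ref{thmdecom} to exactly the order at which the Gaussian coefficient of $G_n$ is reproduced, so that the residual has the improved decay $t^{-n/2-1}$; and simultaneously extract the full $t^{-1}$ gain from the cancellation in $tk_{\ell+1}-2k_\ell$. A naive bound on $\tilde J_n(t)f$ that ignores this cancellation produces only the rate $t^{-\frac{n}{2}(1/q-1/p)}$, corresponding to the $\varepsilon$-loss in Narazaki's estimate [Nar] that we aim to remove.
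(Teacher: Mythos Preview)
Your proposal is correct and follows essentially the same route as the paper: reduce via Proposition \ref{prop_de_fg} to the two kernel estimates $\|J_n(t)g-P_n(t)g\|_{L^p}$ and $\|\tilde J_n(t)g\|_{L^p}$, apply Young's convolution inequality, and then exploit the asymptotic expansions of $k_\ell$ from Theorem \ref{thmdecom} on concentric regions. The paper splits at radius $t^{(1+\varepsilon)/2}$ rather than your $t/2$, which keeps the Taylor remainders visibly $o(1)$ and produces a pointwise bound of the form $t^{-n/2-1}P(|x|^2/t)\,e^{-|x|^2/(4t)}$ with $P$ a polynomial; your cleaner Gaussian bound $Ct^{-n/2-1}e^{-c|x|^2/t}$ is equivalent once the polynomial is absorbed by weakening the Gaussian constant, but you should state that step explicitly. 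Your identification of the cancellation $t k_{\ell+1}(s)-2k_\ell(s)=\dfrac{e^s}{s^\ell}\bigl[\tfrac{t-2s}{2s}+O(1/s)\bigr]$ is exactly the mechanism the paper uses for $\tilde J_n$.
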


The proof of this theorem is almost same as in [Nis]
(see also [INZ]).
We note that the one-dimensional case has already been proved by
Marcati and Nishihara in [MN], and we give a proof only for the case $n\ge 2$.
By Proposition \ref{prop_de_fg}, we have
\[
	u(\cdot ,t)-P_n(t) \( f+g \) -e^{-t/2}\widetilde{\vect{W}}_n \( t;f,g\)
	=J_n(t) \( f+g \) -P_n(t) \( f+g \) +\tilde{J}_n(t)f.
\]
Therefore, the proof is reduced to the following estimates:
\begin{lem}\label{4_lem2}
Let $1\le q\le p\le \infty$, and $g$ an $L^q$-integrable smooth function.
There exists a constant
$C>0$
such that, for any $t >0$, we have the following inequalities:
\begin{align}
\label{4_lem2_eq1}
	\left\| J_n(t)g \right\|_{L^p}
		&\le C \( 1+t \)^{-\frac{n}{2}\(\frac{1}{q}-\frac{1}{p}\)}\|g\|_{L^q},\\
\label{4_lem2_eq2}
	\left\| \tilde{J}_n(t)g \right\|_{L^p}
		&\le C(1+t)^{-\frac{n}{2}\(\frac{1}{q}-\frac{1}{p}\)-1}\|g\|_{L^q} ,\\
\label{4_lem2_eq3}
	\left\| J_n(t)g-P_n(t)g \right\|_{L^p}
		&\leq Ct^{-\frac{n}{2}\(\frac{1}{q}-\frac{1}{p}\)-1}\|g\|_{L^q} .
\end{align}
\end{lem}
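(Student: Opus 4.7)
All three operators $J_n(t)$, $\tilde J_n(t)$ and $P_n(t)$ act as convolutions with radial $t$-dependent kernels $K_J(t,\cdot)$, $K_{\tilde J}(t,\cdot)$ and $K_P(t,\cdot)$. Choosing $r\in[1,\infty]$ with $1/r = 1 - 1/q + 1/p$, Young's convolution inequality reduces the three estimates to the $L^r$-bounds
\[
\|K_J(t,\cdot)\|_{L^r}\le C(1+t)^{-(n/2)(1-1/r)}, \quad \|K_{\tilde J}(t,\cdot)\|_{L^r}\le C(1+t)^{-(n/2)(1-1/r)-1},
\]
\[
\|K_J(t,\cdot)-K_P(t,\cdot)\|_{L^r}\le C t^{-(n/2)(1-1/r)-1}.
\]
Hence the entire argument is about sharp pointwise control of these three radial kernels, followed by Gaussian moment integration.

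The key starting observation is the identity
\[
-\frac{t}{2}+\frac{1}{2}\sqrt{t^2-|z|^2} = -\frac{|z|^2}{2\(t+\sqrt{t^2-|z|^2}\)},
\]
which converts the exponential factor $e^{-t/2}\cdot e^{\frac{1}{2}\sqrt{t^2-|z|^2}}$ produced by $k_\ell$ in Theorem \ref{thmdecom} into a Gaussian-type weight. In the \emph{diffusive} region $|z|\le t/2$, the asymptotic expansion of $k_\ell$ yields
\[
|K_J(t,z)| \le \frac{C}{(t^2-|z|^2)^{n/4}} \exp\(-\frac{|z|^2}{2(t+\sqrt{t^2-|z|^2})}\) \le C t^{-n/2}\exp\(-c|z|^2/t\),
\]
and $\|K_J(t,\cdot)\|_{L^r}\le C t^{-(n/2)(1-1/r)}$ follows by Gaussian moment integration. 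In the \emph{boundary} region $t/2<|z|\le t$ the exponent satisfies $-t/2+\frac{1}{2}\sqrt{t^2-|z|^2}\le -ct$, so the kernel is exponentially small and contributes negligibly. For $0<t\le 1$ the function $k_\ell(\tfrac12\sqrt{t^2-|z|^2})$ is uniformly bounded on its support $\{|z|\le t\}$, and the trivial bound $\|K_J(t,\cdot)\|_{L^r}\le C$ absorbs the $(1+t)$ normalization.

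For $\tilde J_n(t)$, I would exploit the cancellation
\[
t\,k_{\ell+1}(s) - 2\,k_\ell(s) = C\,\frac{e^s}{s^{\ell+\delta}}\(\frac{t}{\sqrt{t^2-|z|^2}}-1\)\(1+O(1/s)\),\quad s=\tfrac12\sqrt{t^2-|z|^2},
\]
which follows by subtracting the two asymptotic expansions in Theorem \ref{thmdecom}. The factor $t/\sqrt{t^2-|z|^2}-1 = (t-\sqrt{t^2-|z|^2})/\sqrt{t^2-|z|^2}$ is of size $|z|^2/t^2$ in the diffusive regime, contributing the extra $|z|^2/t^2$ in the pointwise bound on $K_{\tilde J}$ and, after Gaussian integration, the claimed extra $1/(1+t)$ decay. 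For the comparison $K_J-K_P$ I would match the expansion of $K_J$ with $K_P(t,z)=(4\pi t)^{-n/2}e^{-|z|^2/(4t)}$ to the relevant order: a direct check using the explicit value of $c_n$ in \eqref{cn} shows that the leading amplitudes agree; the phase $-t/2+\frac{1}{2}\sqrt{t^2-|z|^2}$ differs from $-|z|^2/(4t)$ by $O(|z|^4/t^3)$; the amplitude $(t^2-|z|^2)^{-n/4}$ differs from $t^{-n/2}$ by a factor $1+O(|z|^2/t^2)$; and the $O(1/s)$ subleading term in $k_\ell$ together with exponentially small contributions from $|z|>t/2$ (both for $K_J$ and for $K_P$) combine to give $|K_J(t,z)-K_P(t,z)|\le C t^{-n/2-1}(1+|z|^2/t)\exp(-c|z|^2/t)$, which integrates to the required bound.

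The main obstacle I anticipate is the bookkeeping in the comparison $K_J-K_P$: one must verify that the leading constant produced by the expansion of $k_{(n-1)/2}$ (for $n$ odd) or $k_{n/2}$ (for $n$ even) in Theorem \ref{thmdecom}, combined with the prefactor $c_n/2^{n-1}$ or $c_n/2^{n-2}$, reproduces exactly the heat-kernel normalization $(4\pi t)^{-n/2}$, and that the first subleading correction in the expansion contributes $O(1/t)$ rather than $O(1/\sqrt{t})$ (which would be fatal for the $-1$ exponent in \eqref{4_lem2_eq3}). Because the asymptotic expansions of $k_\ell$ have genuinely different forms in the odd and even dimensional cases, both parities must be checked separately.
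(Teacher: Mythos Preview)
Your approach is correct and essentially matches the paper's: both reduce the three estimates to $L^r$-bounds on the radial convolution kernels via Young's (Hausdorff--Young) inequality, split into an inner ``diffusive'' region where the asymptotic expansion of $k_\ell$ from Theorem~\ref{thmdecom} is inserted and an outer region handled by exponential decay, and finish by Gaussian moment integration. The only cosmetic difference is the splitting radius---the paper uses $t^{(1+\ep)/2}$ rather than your $t/2$, which keeps all Taylor remainders $r^2/t^2$, $r^4/t^3$ uniformly small on the inner region and slightly streamlines the $K_J-K_P$ bookkeeping (in your version one must also observe that the phase discrepancy is nonpositive and allow a polynomial in $|z|^2/t$, not just $1+|z|^2/t$, in the pointwise bound before absorbing it into the Gaussian).
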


\begin{proof} 
Let us give a proof for higher odd dimensional cases. Even dimensional cases go parallel.

We first show \eqref{4_lem2_eq1} and \eqref{4_lem2_eq3}. We assume $t\ge 1$ and write $\tilde{c}_n = 2^{-(n-1)}c_n$.
For a constant $0 < \ep <1/2$, put
\begin{align*}
	X_1&=\int_{t^{(1+\ep)/2}B^n(x)}
		\( \tilde{c}_n e^{-t/2}
			k_{\frac{n-1}{2}} \( \frac{1}{2} \sqrt{t^2-r^2} \)
				-\frac{e^{-r^2/(4t)}}{(4\pi t)^{n/2}}
		\)g(y)dy,\\
	X_2&=\int_{B_t^n(x)\sm t^{(1+\ep)/2}B^n(x)}
		\( \tilde{c}_n e^{-t/2}
			k_{\frac{n-1}{2}} \(\frac{1}{2}\sqrt{t^2-r^2}\)
				-\frac{e^{-r^2/(4t)}}{(4\pi t)^{n/2}}
		\)g(y)dy,\\
	X_3&=\int_{B_t^n(x)^c}\frac{e^{-r^2/(4t)}}{(4\pi t)^{n/2}}g(y)dy.
\end{align*}
Then we have
\[
	J_n(t)g(x) -P_n(t)g (x) =X_1+X_2+X_3.
\]

By the Hausdorff-Young inequality ([GGS, p. 142]),
we estimate the integral $X_3$ as 
\[
	\|X_3\|_{L^p}
	\le \(\int_{B_t^n(0)^c}\frac{e^{-\rho|y|^2/(4t)}}{(4\pi t)^{\rho n/2}}dy\)^{1/\rho}
	\|g\|_{L^q}
	\le e^{-t/8}\|g\|_{L^q},
\]
where
$\rho$
is determined by the relation
$1/q-1/p=1-1/\rho$.

In the same manner as in the above estimate, we can obtain
\[
	\|X_2\|_{L^p}
	\le e^{-ct^\ep}\|g\|_{L^q}
\]
with some constant
$c>0$.

Let us estimate the integral $X_1$.
By the asymptotic expansion in Theorem \ref{thmdecom},
we have
\begin{align*}
	&\tilde{c}_n e^{-t/2}
			k_{\frac{n-1}{2}}\(\frac{1}{2}\sqrt{t^2-r^2}\)\\
	&=\frac{1}{(4\pi)^{n/2}}\frac{1}{\( t^2-r^2 \)^{n/4}}
		\exp\( \frac{-t+ \sqrt{t^2-r^2}}{2} \)
		\(1-\frac{n(n-2)}{4\sqrt{t^2-r^2}}+O\(\frac{1}{t^2-r^2}\)\).
\end{align*}
Therefore, we obtain
\[
	X_1=\frac{1}{(4\pi t)^{n/2}}
	\int_{t^{(1+\ep)/2}B^n(x)}e^{-r^2/(4t)}F(r,t)g(y)dy,
\]
where
\[
	F(r,t)=\exp\( \frac{r^2}{4t}+\frac{-t+\sqrt{t^2-r^2}}{2} \)
		\(\frac{t}{\sqrt{t^2-r^2}}\)^{n/2}
		\(1-\frac{n(n-2)}{4\sqrt{t^2-r^2}}
		+O\(\frac{1}{t^2-r^2}\)\)
		-1.
\]
Hence we have
\[
	\|X_1\|_{L^p}\le \frac{C}{t^{n/2}}
		\(\int_{t^{(1+\ep)/2}B^n} e^{-\rho r^2/(4t)}
			\lvert F \( |y|,t \)  \rvert^{\rho} dy\)^{1/\rho}
		\|g\|_{L^q}
\]
with
$1/q-1/p=1-1/\rho$.
Asymptotic expansions \eqref{app_taylor1} and \eqref{app_taylor3} imply
\begin{align*}
	F \( |y|,t \) &=\(1+\frac{1}{t}O\(\frac{|y|^4}{t^2}\)\)
		\(1+\frac{1}{t}O\(\frac{|y|^2}{t}\)\)^{n/2} 
		 \(1+ O \( \frac{1}{t} \) +\frac{1}{t} O\( \frac{\lvert y \rvert^2}{t}\) \)-1\\
	&=\frac{1}{t}O\(1+\frac{|y|^2}{t}+\cdots+\(\frac{|y|^2}{t}\)^N\)
\end{align*}
for some large integer
$N$.
Consequently, we obtain
\begin{align*}
	\|X_1\|_{L^p}&\le \frac{C}{t^{n/2+1}}
		\(\int_{t^{(1+\ep)/2}B^n(0)}e^{-\rho |y|^2/(4t)}
			\(1+\frac{|y|^2}{t}+\cdots+\(\frac{|y|^2}{t}\)^N\)^{\rho}
				dy\)^{1/\rho} \|g\|_{L^q}\\
	&\le \frac{C}{t^{n/2+1}}t^{n/(2\rho)}
		\(\int_{\R^n}e^{-\rho |z|^2}
			\( 1+|z|^2+\cdots+|z|^{2N} \)^{\rho} dz \)^{1/\rho} \|g\|_{L^q}\\
	&\le Ct^{-\frac{n}{2} \( \frac{1}{q}-\frac{1}{p} \) -1}\|g\|_{L^q},
\end{align*}
which implies the estimate \eqref{4_lem2_eq3} for
$t\ge 1$.
Moreover, we recall the well-known fact
\[
	\|P_n(t)g\|_{L^p}\le Ct^{-\frac{n}{2}(\frac{1}{q}-\frac{1}{p})}\|g\|_{L^q},\ t>0
\]
(see [GGS, p. 8]).
Using this fact, we have
\[
	\left\| J_n(t)g \right\|_{L^p}\leq
	 \left\| J_n(t)g-P_n(t)g \right\|_{L^p} + \left\| P_n(t)g \right\|_{L^p}
	\le Ct^{-\frac{n}{2} \( \frac{1}{q}-\frac{1}{p} \) }\|g\|_{L^q},
\]
which implies \eqref{4_lem2_eq1} for $t\ge 1$.
The estimates \eqref{4_lem2_eq1} and \eqref{4_lem2_eq3} for 
$0\le t<1$
are easy, and we omit the proof.

Next, we show the estimate \eqref{4_lem2_eq2}.
We assume $t\ge 1$. 
For a constant $0<\ep <1/2$, put
\begin{align*}
	X_4&=\tilde{c}_n\int_{B_t^n(x)\sm t^{(1+\ep)/2}B^n(x)}
		\frac{\pd}{\pd t}\(e^{-t/2}k_{\frac{n-1}{2}}\(\frac{1}{2}\sqrt{t^2-r^2}\)\) g(y)dy,\\
	X_5&=\tilde{c}_n\int_{t^{(1+\ep)/2}B^n(x)}
		\frac{\pd}{\pd t} \( e^{-t/2}k_{\frac{n-1}{2}} \( \frac{1}{2}\sqrt{t^2-r^2}\)\)g(y)dy.
\end{align*}
Then we have
\[
\tilde{J}_n(t)g(x) =X_4+X_5.
\]

Since
$k_{\ell+1}(s)=k'_{\ell}(s)/s$ leads to
\begin{align*}
	\frac{\pd}{\pd t} \( e^{-t/2}k_{\frac{n-1}{2}}\(\frac{1}{2}\sqrt{t^2-r^2}\)\)
	&=e^{-t/2}\left[
		-\frac{1}{2}k_{\frac{n-1}{2}}\(\frac{1}{2}\sqrt{t^2-r^2}\)
		+\frac{t}{2\sqrt{t^2-r^2}}
			k_{\frac{n-1}{2}}' \(\frac{1}{2}\sqrt{t^2-r^2}\)\right] \\
	&=e^{-t/2}\left[
		-\frac{1}{2}k_{\frac{n-1}{2}}\(\frac{1}{2}\sqrt{t^2-r^2}\)
		+\frac{t}{4}k_{\frac{n+1}{2}}\(\frac{1}{2}\sqrt{t^2-r^2}\) \right],
\end{align*}
in the same manner as the estimate of $X_2$,
we can obtain
\[
	\|X_4\|_{L^p}\leq Ce^{-ct^\ep}\|g\|_{L^q}
\]
with some constant
$c>0$.

Now we turn to the estimate for
$X_5$.
By using the asymptotic expansion of $k_\ell$, \eqref{app_taylor1} and \eqref{app_taylor2} again,
we have
\begin{align*}
	&\frac{\pd}{\pd t}\(e^{-t/2}k_{\frac{n-1}{2}}\(\frac{1}{2}\sqrt{t^2-r^2}\)\)\\
	&=\frac{2^{(n-3)/2}}{\sqrt{\pi}}
		e^{-r^2/(4t)}
		\exp\( \frac{r^2}{4t}+\frac{-t+\sqrt{t^2-r^2}}{2} \)
		(t^2-r^2)^{-n/4}\(\frac{t}{\sqrt{t^2-r^2}}-1\)
		\( 1+O\( \frac{1}{t} \) \) \\
	&=\frac{2^{(n-3)/2}}{\sqrt{\pi}}
		e^{-r^2/(4t)}
		\( 1+\frac{1}{t}O\(\frac{r^2}{t}\)\)
		t^{-n/2}\( 1+\frac{1}{t}O\(\frac{r^2}{t}\)\)^{n/2}
		\frac{1}{t}O\(\frac{r^2}{t}\)
		\(1+O\(\frac{1}{t}\)\)\\
	&\leq Ct^{-n/2-1}e^{-r^2(4t)}
		\( 1+\frac{r^2}{t}+\cdots+\(\frac{r^2}{t}\)^N\)
\end{align*}
on
the ball $t^{(1+\ep)/2}B^n(x)$
with some large integer
$N$.
Consequently, we obtain
\begin{align*}
	\|X_5\|_{L^p}&\le Ct^{-\frac{n}{2}-1}
	\(\int_{t^{(1+\ep)/2}B^n}e^{-\rho |y|^2/(4t)}
		\(1+\frac{|y|^2}{t}+\cdots+\(\frac{|y|^2}{t}\)^N\)^{\rho} dy \)^{1/\rho}
		\|g\|_{L^q}\\
	&\le Ct^{-\frac{n}{2}-1-\frac{n}{2\rho}}
		\(\int_{\R^n} e^{-\rho |z|^2/4}
			\( 1+|z|^2+\cdots+|z|^{2N} \)^{\rho} dz \)^{1/\rho} \|g\|_{L^q}\\
	&\le Ct^{-\frac{n}{2}\(\frac{1}{q}-\frac{1}{p}\)-1}\|g\|_{L^q}
\end{align*}
with
$1/q-1/p=1-1/\rho$,
which implies \eqref{4_lem2_eq2} for
$t\ge 1$.

The estimate \eqref{4_lem2_eq2} for
$0\le t<1$
is easy, and we omit the proof.
\end{proof}

\section{Appendices}
\subsection{Proofs of preliminary estimates}
\begin{proof}[Proof of Lemma \ref{lem_est_Wn}]
Let us give a proof for even dimensional cases. The other cases go parallel. 

Changing the variable as $y=x+tz$ with $z \in B^n$, we have
\[
	\int_{B_t^n(x)} \frac{1}{\sqrt{t^2-r^2}} g(y) dy
	= t^{n-1} \int_{B^n} \frac{1}{\sqrt{1-\lvert z \rvert^2}} g(x+tz) dz.
\]

When we estimate the function 
\[
\vect{W}_n(t)g(x)
=2c_n \sum_{j=0}^{(n-2)/2} \frac{1}{8^j j !}\( \frac{1}{t} \frac{\pd}{\pd t} \)^{(n-2)/2-j} \(  t^{n-1} \int_{B^n} \frac{1}{\sqrt{1-\lvert z \rvert^2}} g(x+tz) dz \) ,
\]
the worst term with respect to the growth order of $t$ is given by $j= (n-2)/2$. We can bound it above by $C(1+t)^{n-1} \| g \|_{L^\infty}$. Furthermore, we can bound the term of $j =0$ above by  $C (1+t)^{n/2} \| g \|_{W^{n/2-1 ,\infty}}$. The other terms are bounded above by these two quantities (up to a constant multiple).  Hence we obtain the estimate for $\vect{W}_n(t)g(x)$.

From Proposition \ref{prop_de_fg}, we have
\[
\widehat{\vect{W}}_n(t) f(x) = \frac{c_n t^n}{2^{\frac{3n-2}{2}}\(\frac{n}{2} \) !} \int_{B^n} \frac{1}{\sqrt{1-\lvert z \rvert^2}} f(x+tz) dz,
\]
which implies the estimate for $\widehat{\vect{W}}_n(t) f(x)$. Also, we have
\[
\widetilde{\vect{W}}_n(t;f,g) (x)
= \frac{1}{2} \vect{W}_n (t) f(x) + \vect{W}_n (t) g(x) + \widehat{\vect{W}}_n(t) f(x)  + \frac{\pd}{\pd t} \vect{W}_n(t)f(x).
\]
Combining the above estimates for $\vect{W}_n(t)g(x)$ and $\widehat{\vect{W}}_n(t) f(x)$, we obtain the conclusion.
\end{proof}

\begin{proof}[Proof of Lemma \ref{list_J}]
Using Remark \ref{recursion}, integration by parts implies the identities.
\end{proof}

\begin{proof}[Proof of Lemma \ref{En}] 
If $\dist (x, CS (h) ) \leq t-d_h$ and $t\geq d_h$, then the intersection $S^{n-1}_t(x) \cap CS(h)$ is a null set with respect to the $(n-1)$-dimensional spherical Lebesgue measure. Hence Lemma \ref{list_J} guarantees the conclusion.
\end{proof} 

\begin{proof}[Proof of Lemma \ref{En_asymp}] 
(1) We give a proof for even dimensional cases. The other cases go parallel.

We remark that, from the definition of $c_n$ \eqref{cn}, we have
\[
E_n (r,t) =  \frac{e^{-t/2}}{2^{3n/2+1}\pi^{n/2}} k_{\frac{n}{2}+1}\( \frac{1}{2}\sqrt{t^2-r^2} \) .
\]
From Theorem \ref{thmdecom}, we have
\[
k_{\frac{n}{2}+1} \( \frac{1}{2} \sqrt{t^2 -\f (t)^2} \) 
= \frac{1}{2} \( \frac{2}{\sqrt{t^2 -\f (t)^2}} \)^{n/2+1} \exp \( \frac{\sqrt{t^2 -\f (t)^2}}{2} \) \( 1+ O \( \frac{1}{\sqrt{t^2 -\f (t)^2}} \) \)
\]
as $t$ goes to infinity, which implies the conclusion.

(2) Applying the fact \eqref{app_taylor1} to the first assertion, we obtain the conclusion.

(3) Applying the fact \eqref{app_taylor2} to the second assertion, we obtain the conclusion.
\end{proof}

\begin{proof}[Proof of Lemma \ref{list_tildeJ}]
Using Remark \ref{recursion}, integration by parts implies the identities.
\end{proof}

\begin{proof}[Proof of Lemma \ref{lem_tildeJ}]
(1) This is a direct consequence of \eqref{4_lem2_eq2} in Lemma \ref{4_lem2}.

(2) We give a proof for even dimensional cases. The other cases go parallel.

From the asymptotic expansion in Theorem \ref{thmdecom}, we have
\begin{align*}
&e^{-t/2} \( t k_{\frac{n}{2}+2} \( \frac{1}{2} \sqrt{t^2 -r^2} \) -2 k_{\frac{n}{2}+1} \( \frac{1}{2} \sqrt{t^2 -r^2} \) \) \\
&=\frac{2^{n/2+1}}{\( t^2 -r^2 \)^{n/4+1/2}} \exp \( \frac{-t+\sqrt{t^2 -r^2}}{2} \) \\
&\quad \times \left[ \frac{t}{\sqrt{t^2 -r^2}} \( 1+ O \( \frac{1}{\sqrt{t^2-r^2}} \) \) - \( 1+ O \( \frac{1}{\sqrt{t^2-r^2}} \) \)  \right] .
\end{align*}
Using the facts \eqref{app_taylor1} and \eqref{app_taylor2}, the above expansion coincides with
\[
2^{n/2+1} t^{-n/2-1} \( O \( \frac{1}{t} \) + O \( \frac{\psi (t)^2}{t^2} \) \) ,
\]
which implies the conclusion.

(3) Using the asymptotic expansion in Theorem \ref{thmdecom}, in the same manner as in the second assertion, we obtain the conclusion.
\end{proof}
\subsection{Frequently used Taylor's expansions}
Let us list up frequently used Taylor's expansions:
\begin{itemize}
\item As $s$ tends to zero, we have
\begin{equation}
\label{app_taylor}
(1-s^2)^\al = 1  - \al s^2 + \frac{\al (\al-1)}{2} s^4 + O \( s^6 \) .
\end{equation}
\item If a function $\f (t)$ is of small order of $t$ as $t$ goes to infinity, then we have 
\begin{equation}
\label{app_taylor1}
	\( t^2-\f (t)^2 \)^\al = t^{2\al}
		\( 1 -\al \( \frac{\f (t)}{t} \)^2 + \frac{\al (\al-1)}{2} \( \frac{\f (t)}{t} \)^4 
		+ O\( \( \frac{\f (t)}{t} \)^6 \) \)
\end{equation}
as $t$ goes to infinity.
\item If a function $\f (t)$ is of small order of $\sqrt{t}$ as $t$ goes to infinity, then, as $t$ goes to infinity, we have the following expansions:
\begin{align}
\label{app_taylor2}
	&\exp\( \frac{-t+\sqrt{t^2-\f (t)^2}}{2} \)
	= 1 - \frac{\f (t)^2}{4t} + O\( \frac{\f (t)^4}{t^2}  \) ,\\
\label{app_taylor3}
	&\exp\(  \frac{\f(t)^2}{4t}+\frac{-t+\sqrt{t^2-\f (t)^2}}{2} \)
	 = 1 + \frac{1}{t}O\( \frac{\f (t)^4}{t^2} \).
\end{align}
\end{itemize}

\subsection{Properties of modified Bessel functions}
In this section, we collect some properties of the modified Bessel functions
\begin{equation}
\label{Bessel}
	I_{\nu}(s)=\sum_{j=0}^\infty \frac{1}{j ! \Gamma (j+\nu+1)}
		\(\frac{s}{2}\)^{2j+\nu}
\end{equation}
used in this paper from [NO]:
\begin{itemize}
\item For a positive constant $a$, we have
\begin{equation}
\label{Bessel_1}
	\int_{-a}^a\frac{e^{s/2}}{\sqrt{a^2-s^2}}ds=\pi I_0\(\frac{a}{2}\).
\end{equation}
\item Direct computation shows the following recursion:
\begin{equation}
\label{Bessel_2}
	I_0' (s)=I_1(s),
	\ I_1'(s)=I_0(s)-\frac{1}{s}I_1(s),
	\ \frac{1}{s} \frac{d}{ds}\( \frac{I_\ell(s)}{s^\ell} \) =\frac{I_{\ell+1}(s)}{s^{\ell +1}}.
\end{equation}
\item The modified Bessel function $I_\nu (s)$ has the expansion
\begin{align}
\nonumber
	I_\nu (s)=\frac{e^s}{\sqrt{2\pi s}}
	&\( 1-\frac{(\nu-1/2)(\nu+1/2)}{2s}
		+\frac{(\nu-1/2)(\nu-3/2)(\nu+3/2)(\nu+1/2)}{2!2^2s^2} \right. \\
	&\quad \left. -\cdots+(-1)^\ell \frac{1}{\ell !2^\ell s^\ell}
		\prod_{j=1}^{\ell} \( \nu-(j-1/2) \) \( \nu+(j-1/2)\) 
	 +O\( \frac{1}{s^{\ell+1}} \) \)
\label{Bessel_3}
\end{align}
as $s$ goes to infinity.
\end{itemize}


\no Shigehiro SAKATA

\no E-mail: sakata@cc.miyazaki-u.ac.jp

\no Address: Faculty of Education and Culture, University of Miyazaki, 1-1 Gakuen Kibana-dai West, Miyazaki city, Miyazaki prefecture, 889-2192, Japan\\

\no Yuta WAKASUGI

\no E-mail: yuta.wakasugi@math.nagoya-u.ac.jp

\no Address: Graduate School of Mathematics, Nagoya University, Furocho, Chikusaku, Nagoya, 464-8602, Japan

\end{document}